\numberwithin{equation}{section}
\setlist[itemize,1]{label=$\bullet$}
\setlist[itemize,2]{label=$\triangleleft$}
\setlist[enumerate,1]{label=(\roman*)}
\setlist[enumerate,2]{label=(\arabic*)}
\definecolor{TUIl-orange}{RGB}{255, 121, 0}
\definecolor{TUIl-titleblue}{RGB}{0, 68, 121}
\definecolor{TUIl-textblue}{RGB}{0, 51, 88}
\definecolor{TUIl-green}{RGB}{0, 116, 122}
\definecolor{TUIl-grey}{RGB}{165, 165, 165}
\algnewcommand\algorithmicinput{\textbf{Input:}}
\algnewcommand\AlgInput{\item[\algorithmicinput]}
\algnewcommand\algorithmicoutput{\textbf{Output:}}
\algnewcommand\AlgOutput{\item[\algorithmicoutput]}
\newtheoremstyle{dotless}{}{}{\itshape}{}{\bfseries}{}{ }{}
\newtheoremstyle{no-italic}{}{}{}{}{\bfseries}{}{ }{}
\theoremstyle{dotless}
\newtheorem{Theorem}{Theorem}[section]
\newtheorem{Example}[Theorem]{Example}
\newtheorem{Lemma}[Theorem]{Lemma}
\newtheorem{Definition}[Theorem]{Definition}
\newtheorem{Assumption}{Assumption}
\newtheorem{Remark}[Theorem]{Remark}
\newtheorem{Proposition}[Theorem]{Proposition}
\newtheorem{Corollary}[Theorem]{Corollary}
\newtheorem{Test Instance}[Theorem]{Test Instance}
\theoremstyle{no-italic}
\newcommand*{\R}{\mathbb{R}}
\DeclareMathOperator{\cl}{cl}			
\def\R{{\mathbb R}}
\def\dom{\textup{dom }}
\def\WMin{\textup{WMin}}
\def\WMax{\textup{WMax}}
\def\Max{\mathop{\rm Max}}
\def\Min{\mathop{\rm Min}}
\def\SMin{\mathop{\rm SMin}}
\def\Min{\textup{Min}}
\def\epi{\textup{epi }}
\def\Int{\textup{int }}
\def\bd{\textup{bd }}
\def\cl{\textup{cl }}
\def\conv{\textup{conv }}
\def\gph{\textup{gph }}
\def\epi{\textup{epi }}
\title{The Fermat Rule for Set Optimization Problems with Lipschitzian Set-Valued Mappings}
\author{Gemayqzel Bouza  \thanks{Faculty of Mathematics and Computer Science, University of Havana, 10400  Havana, Cuba,
{\texttt{gema@matcom.uh.cu}}} \and Ernest Quintana \thanks{Institute for Mathematics, Technische Universität Ilmenau, 98693 Ilmenau, Germany,
{\texttt{ernest.quintana-aparicio@tu-ilmenau.de}}} \and Christiane Tammer \thanks{Institute of Mathematics, Martin-Luther-Universität Halle-Wittenberg, 06126 Halle, Germany, {\texttt{\{christiane.tammer, anh.vu\}@mathematik.uni-halle.de}}} \and Vu Anh Tuan\footnotemark[3]}
\date{}
\begin{document}

\maketitle

\begin{abstract}
In this paper, we consider set optimization problems where the solution concept is given by the set approach. Specifically, we deal with the lower less and the upper less set relations. First, we derive the convexity and Lipschitzianity of suitable scalarizing functionals under the assumption that the set-valued objective mapping has certain convexity and Lipschitzianity properties. Then, we obtain upper estimates of the limiting subdifferential of these functionals. These results, together with the properties of the scalarization functionals, allow us to obtain a Fermat rule for set optimization problems with Lipschitzian data.
\end{abstract}

\noindent {\small\textbf{Key Words:} set optimization, robust vector optimization, descent method, stationary point}

\vspace{2ex} \noindent {\small\textbf{Mathematics subject
classifications (MSC 2010):}}  	49J53, 90C26, 90C29, 90C48	


\section{Introduction}

Set optimization is a class of mathematical problems that consist in minimizing a given set-valued objective mapping. This type of problems generalizes vector optimization models and has received a lot of attention during the last decade due to their applications in finance \cite{FeinsteinZacharyRudloff2015, HamelHeydeLohneRudloffSchrage2015}, socio-economics \cite{Bao9,NeukelNorman2013}, robotics \cite{jahn2011} and robust multiobjective decision making \cite{EhrgottIdeSchobel2014, IdeKobisKuroiwa2014}. 

There are two main approaches for defining optimal solutions of a set optimization problem, namely  the vector approach and the set approach. In the vector approach, we look for efficient points of the image set of the set-valued mapping \cite{jahn2011}. Hence, in this case, one element completely determines the quality of a given set, while ignoring the rest of its elements. This is an important drawback for modeling practical problems, and the set approach is an attempt at fixing this problem. The idea in the set approach is to introduce a preorder relation on the power set of the image space, and to define minimal solutions accordingly. The first set relations were introduced independently by Young \cite{Young1931} and Nishnianidze \cite{Nishnianidze1984}, and later by Kuroiwa \cite{kuroiwa1998, Kuroiwa2001}. More recently, new ones were derived by Jahn and Ha \cite{jahnha2011}, and Karaman et al. \cite{karamanemrahetal2018}. Since then, there have been a lot of research related to the existence of solutions, duality statements, optimality conditions and algorithms for solving these set optimization problems. We refer the reader to \cite{KTZ} and the references therein for a comprehensive overview of the field.

In this paper, we are concerned with necessary optimality conditions for solutions to set optimization problems given by the set approach. The literature on the topic is rich and different results have been obtained using generalized
differentiation objects lying in the primal as well as in the dual spaces (see Giannessi \cite{Gia80}). 

The techniques employed in the primal space are mainly based on some type of directional derivatives and can be roughly separated into the following classes:

\begin{itemize}

\item Directional derivatives based on set differences \cite{dempepilecka2016,jahn2015, karaman2019, pilecka2014}.

The main idea is to consider a suitable operation that resembles subtraction in the power of the image space. These operations are based on the well known differences of sets of Minkowski and Demyanov  \cite{Hadwiger1950, rubinov1992}, but usually slight modifications are introduced in order to make it useful in set optimization. Then, with the help of the set difference, a directional derivative is defined as a limit of an associated incremental quotient. Furthermore, the optimality conditions obtained in this setting establish the nonnegativity of the directional derivative, according to the treated set relation.

\item Directional derivatives based on a distance type functional \cite{ha2018,ha2019}.

In contrast to the previous technique, a directional derivative is introduced in \cite{ha2018} with the help of the standard algebraic difference of sets and a distance type functional. The distance functional is a modification of the well known Hausdorff distance for sets and is based on the classical Hiriart-Urruty functional \cite{Hiriart1}. The directional derivative is in this case defined as the minimal set of some compact set to which the incremental quotient converges (in the sense of the modified distance). A similar idea is used in \cite{ha2019} to introduce a concept of slope for a set-valued map at a given point, together with necessary conditions for minimal solutions of the set optimization problem in the convex case.

\item Directional derivative based on embedding \cite{Kuroiwa2009}.

The idea in \cite{Kuroiwa2009} is to embed the class of convex and bounded sets (with respect to the ordering cone) into a suitable normed space. With this construction, the original set optimization problem is equivalent to a standard vector optimization problem having as a target function the composition of the embedding map and the set-valued objective mapping. Hence, a directional derivative of set-valued mapping is defined in a standard way as the directional derivative of this composition.

\item Directional derivatives of selections of the set-valued objective mapping \cite{Alonsomarin2005, AlonsoMarin2009}.

In this approach, there is no explicit definition of directional derivatives for a set-valued mapping, but rather the authors use those of its continuous selections. Roughly speaking, the optimality conditions establish the nonnegativity, in the sense of the ordering cone, of these directional derivatives.

\item Contingent derivatives and variations \cite{KKY2017, kongetal2017, oussarhandaidai2018, rodmarinsama2007}.

Contingent derivatives and epiderivatives have been successfully employed in obtaining optimality conditions for set optimization problems with  the vector approach \cite{jahn2011}. Consequently, it was a natural idea to apply them also in the set approach setting. In this direction, other modifications of the derivatives were also studied, like those of Shi \cite{Shi1991} and Studniarski \cite{Studniarski-1986}.

\end{itemize}

On the other hand, optimality conditions using generalized
differentiation objects lying in the dual space have been considered in the literature, see \cite{Jahn2017, KTTY2017, KKY2017}. In particular:

\begin{itemize}
\item In \cite{Jahn2017}, the case in which the set-valued mapping is given by functional constraints was analyzed. Using a vectorization  result by Jahn \cite{Jahn2015A}, the set-valued problem was transformed into a vector-valued one (with an infinite dimensional image space), and hence classical optimality conditions for vector optimization problems were applied. 

\item In \cite{KTTY2017, KKY2017} the idea is that, under different assumptions, set approach solutions of the set-valued problem are also solutions in the vector approach. Hence, under these conditions, well studied optimality conditions for vector approach solutions can be applied in the context of the set approach.
\end{itemize}

However, we want to mention that some of these optimality conditions are derived under somewhat strong assumptions on the set-valued objective mapping. For example, in \cite{Alonsomarin2005, AlonsoMarin2009, KKY2017, kongetal2017, oussarhandaidai2018, rodmarinsama2007}, it is required that the optimal set has a strongly minimal element in order to verify optimality. In addition, either the convexity or compactness (mostly both) of the images of the set-valued objective mapping are needed in \cite{dempepilecka2016, ha2018, ha2019, jahn2015, Jahn2017, pilecka2014}. 

\bigskip

Recently, it also caught our attention that, independently, Amahroq and Oussarhan in \cite{amahroqoussarhan2019,amahroq2019convex,amahroqoussarhanPhD2019} and  Huerga, Jim{\'e}nez and Novo in \cite{HueJimNov2020} were working with similar ideas to ours for deriving optimality conditions in set optimization. The main differences between the results derived in these papers and our optimality conditions are the following:

\begin{itemize}
\item In \cite{amahroqoussarhan2019,amahroq2019convex}, \cite{HueJimNov2020}, the authors studied only solution concepts based on the lower less relation. In our paper, we also examine solution concepts based on the upper less relation.

\item In \cite{amahroq2019convex}, \cite{HueJimNov2020}, the case in which the set-valued mapping is convex was analyzed under different assumptions to ours. In addition, the optimality conditions in \cite{amahroq2019convex} require that the optimal set has a strongly minimal element.

\item In \cite{amahroqoussarhan2019}, the case in which the set-valued objective mapping  is locally Lipschitzian is analyzed. However, the authors assume the compactness of the images of the set valued objective mapping in \cite{amahroqoussarhan2019}. The optimality conditions are not established using the initial data, but rather they are expressed  in a limiting form. Also in \cite{HueJimNov2020}, certain compactness assumptions concerning the involved set-valued mappings are supposed.  In our paper, we derive our results without compactness assumptions concerning the set-valued objective mapping.
\end{itemize}

In this paper, as mentioned above, we deal with the lower less relation and the upper less relation, and obtain optimality conditions using generalized
differentiation objects lying in the dual spaces. By means of a suitable scalarizing functional, we construct a  scalar problem that characterizes the solutions of the set-valued problem. Then, based on the initial data, the necessary conditions for the scalar problem are obtained by using well known results from variational analysis. Our results extend those in \cite{DuttaTammer2006} for vector optimization problems.

The paper is organized as follows: In Section \ref{sec: preliminaries}, we introduce fundamental notations, definitions and auxiliary results that will be used through the text. In Section \ref{sec: scalarizing function}, we derive the convexity and Lipschitzianity of suitable scalarizing functionals under certain convexity and Lipschitzianity assumptions on the set-valued objective mapping. Sections \ref{sec: subdifferential l} and \ref{sec: subdifferential u} are devoted to obtaining upper estimates of the limiting subdifferential of these scalarizing functionals. The previous results are employed in Section \ref{sec: opt cond} to derive the optimality conditions for solutions to set optimization problems. Finally, we close the paper  by summarizing our contributions and establishing some further remarks in Section \ref{sec: conclussions}.

\section{Preliminaries}\label{sec: preliminaries}
We start this section by establishing the main notations used in the paper. Given a normed space $(X, \|\cdot\|_X),$ we will denote by $(X^*,\|\cdot\|_X^*)$ its topological dual. In addition, the closed unit balls in $X$ and $X^*$ will be denoted as $\mathbb{B}_X$ and $\mathbb{B}^*_X,$ respectively. We omit the subscript $X$  if there is no risk of confusion. For a nonempty set $A\subseteq X$, $\Int A$, $\cl A$, $\bd A$, $\conv A$ stand for the interior, closure, boundary and convex hull of $A$. If $B \subseteq X^*,$ we denote by $\overline{\conv}^* B$ the closure of the convex hull of $B$ in the weak$^*$ topology of $X^*$.  We always use lowercase letters to denote a vector or scalar-valued function, and capital letters for a set-valued mapping. 

\begin{Definition}Let $X$ be a normed space. Then:

\begin{enumerate}

\item  A nonempty set $C\subseteq X$ is said to be a cone\index{cone} if $\lambda c\in C$ for every $c\in C$ and every $\lambda\geq 0$. The cone $C$ is called:

\begin{itemize}
		\item  convex\index{convex set} if $C+C\subseteq C$,
		\item  proper\index{proper set} if $C\neq\{0\}$ and $C\neq X$, 
		\item  solid\index{reproducing} if $\Int C\neq \emptyset$,
		\item  pointed\index{pointed} if $C\cap (-C)=\{0\}$.
\end{itemize}
	
\item For a cone $C\subseteq X,$ the continuous dual cone of $C$ is given by
\[
C^{*}:=\{x^* \in X^* \mid \forall\; c\in C:\langle x^*,c \rangle\geq 0\}.
\]

\end{enumerate}

\end{Definition}

The support function $\sigma_A: X \rightarrow \overline{\R}$ 
of a set $A\subseteq X^*$ is defined by

\begin{equation*}
\sigma_A(x):=\sup\limits_{x^*\in A}\left\langle x^{*},x \right\rangle, \qquad (x\in X).
\end{equation*}

From now on, we work with the following assumption:

\begin{Assumption}\label{ass 1}
Let $X$ and $Y$ be Banach spaces, $K\subseteq Y$ be a closed, convex, pointed and solid cone and $e\in \Int K$. Let $\Omega \subseteq X$ be nonempty and closed, and fix $\bar{x} \in \Omega.$ Furthermore, let $F:X \rightrightarrows Y$ be a set-valued mapping such that $\Omega \subseteq \Int\dom F.$

\end{Assumption}

Of course, in Assumption \ref{ass 1} above, 

$$\dom F: = \{x\in X \mid F(x)\neq \emptyset\}.$$ Furthermore, the graph and epigraph of $F$ are defined respectively as 

\[
\gph F:=\big\{(x,y)\in X\times Y
\mid y\in F(x)\big\}, 
\]
\[
\epi F:=\big\{(x,y)\in 
X\times Y \mid y\in F(x)+K\big\}.
\]  We define the epigraphical and hypergraphical multifunctions associated to $F$ respectively as the set-valued mappings  $\mathcal{E}_F, \mathcal{H}_F :X\rightrightarrows Y$ given by 

\begin{equation}\label{eq: epi function}
\mathcal{E}_F(x):= F(x)+K,
\end{equation}

\begin{equation}\label{eq: hyper function}
\mathcal{H}_F(x):= F(x)-K.
\end{equation}

The cone $K$ generates a partial order $\preceq_K$ on $Y$ as follows: $y_1\preceq_K y_2$ if and only if $y_2-y_1\in K.$ Associated to $\preceq_K$ is the strict inequality $\prec_K$ which is defined as: $y_1\prec_K y_2$ if and only if $y_2-y_1\in \Int K.$ We also recall that if $y_1,y_2\in Y$ and $y_1\preceq_K y_2$, the interval $[y_1,y_2]$ is defined by
\[
[y_1,y_2]:=(y_1+K)\cap (y_2-K).
\] 

\begin{Definition}

Let Assumption \ref{ass 1}  be fulfilled  and let $A\subseteq Y.$ 

\begin{enumerate}
\item The set of weakly minimal elements of $A$ with respect to $K$ is defined as

$$\WMin (A,K):= \{y \in A\mid \left(y- \Int K\right)\cap A =\emptyset\}.$$

\item  The set of minimal elements of $A$ with respect to $K$ is defined as

$$\Min (A,K):= \{y \in A\mid \left(y-  K\right)\cap A =\{y\}\}.$$
\item The set of weakly maximal elements of $A$ with respect to $K$ is defined as

$$\WMax (A,K):= \{y \in A\mid \left(y+\Int K\right)\cap A =\emptyset\}.$$

\item  The set of maximal elements of $A$ with respect to $K$ is defined as

$$\Max (A,K):= \{y \in A\mid \left(y+  K\right)\cap A =\{y\}\}.$$

\item The set of strongly minimal elements of $A$ with respect to $K$ is defined as

$$\SMin (A,K): = \{y \in A \mid A\subseteq y+K\}.$$

\end{enumerate} 

\end{Definition}


As mentioned in the introduction, set optimization problems in our context are based on some preorder relations between subsets of $Y.$ These preorders are defined below.

\begin{Definition}[\cite{kuroiwa1998}, \cite{KTH}]\label{def-set-relation}
Let Assumption \ref{ass 1}  be fulfilled and suppose $A,B,D \subseteq Y.$

\begin{enumerate}
	\item The lower- less relation $\preceq^{(l)}_D$ is defined as 

$$A\preceq^{(l)}_D B: \Longleftrightarrow B\subseteq A+D.$$ 

\item The upper- less relation $\preceq^{(u)}_D$ is defined as 

$$A\preceq^{(u)}_D B: \Longleftrightarrow A\subseteq B-D.$$ 

%
%
%

\end{enumerate}
\end{Definition}
%

When we take the order with respect to $\Int D,$ we write $\prec_D^{(r)}$ instead of $\preceq_{\Int D}^{(r)},$ for $r \in \{l,u\}.$ Furthermore, for $\preceq^{(l)}_D$ and $\preceq^{(u)}_D$, we recall the equivalence relations on $2^Y$ with respect to a set $D\subseteq Y$ as follows:  

$$A \sim^{(l)}_D B \Longleftrightarrow A \preceq^{(l)}_D B \textrm{ and } B \preceq^{(l)}_D A \Longleftrightarrow A+D = B+D,$$

$$A \sim^{(u)}_D B \Longleftrightarrow A \preceq^{(u)}_D B \textrm{ and } B \preceq^{(u)}_D A \Longleftrightarrow A-D = B-D.$$ 

These family of equivalence classes were first introduced by Hern{\'a}ndez and Rodr{\'{\i}}guez-Mar{\'{\i}}n \cite{HR2007}. For a set $A\subseteq Y,$ we will denote the corresponding equivalence classes by $[A]^{(l)}$ and $[A]^{(u)}$ respectively, depending on the set relation. Under our assumption that $K$ is a closed, convex, pointed and solid cone we get 
\[
A \sim^{(l)}_K B\Longleftrightarrow \Min(A,K)=\Min(B,K),
\]
(compare \cite[Remark 2.6.11]{KTZ}). Next we define convexity of a set-valued mapping with respect to a set relation.

\begin{Definition}
Let Assumption \ref{ass 1}  be fulfilled and let $r \in\{l,u\}$. We say that $F$ is $\preceq_K^{(r)}$-convex if 
	
$$ \forall\; x_1, x_2\in \dom F, \lambda\in (0,1): F(\lambda x_1 + (1-\lambda)x_2)\preceq^{(r)}_K\lambda F(x_1)+(1-\lambda)F(x_2).$$

\end{Definition}
\begin{Remark}
Recall that the classical concept of convexity for a set-valued mapping is that $F:X\rightrightarrows Y$ is convex if its graph is a convex subset of $X\times Y$. It can be shown that $F$ is $\preceq^{(l)}_K$- convex if and only if $\epi F$ is a convex set or, equivalently, if the epigraphical multifunction $\mathcal{E}_F$ is convex. 

\end{Remark}

In the next definition we consider different concepts of boundedness associated to a set-valued mapping.

\begin{Definition}
	
	Let Assumption \ref{ass 1}  be fulfilled and let $U \subseteq  X, \; A\subseteq Y.$ We say that: 
	
	\begin{enumerate}
		
\item $A$ is $K$-lower (upper) bounded if there exists $\mu >0$ such that 

\[
A\subseteq -\mu e+ K \;\; (\textrm{ respectively, } A\subseteq \mu e - K).
\]

\item $F$ is $l$-upper bounded on the set $U$ if there exists a constant $\mu > 0$ such that:
		
		$$\forall\; x\in U: \; F(x)\cap (\mu e - K) \in  [F(x)]^{(l)} .$$
		
\item $F$ is $l$-lower bounded on $U$ if $F[U]$ is $K$-lower bounded. Equivalently, there exists $\mu >0$ such that
		
		$$F[U]\subseteq -\mu e + K.$$
		
\item $F$ is $l$-bounded on $U$ if $F$ is $l$-upper bounded and $l$-lower bounded on $U$. Equivalently, there exists a constant $\mu > 0$ such that:
		
		$$\forall\; x\in U: \; F(x)\cap [-\mu e,\; \mu e] \in  [F(x)]^{(l)}.$$
		
\item $F$ is locally $l$-(upper, lower) bounded at $\bar{x}$ if it is $l$- (upper, lower) bounded on a neighborhood $U$ of $\bar{x}.$
		
\item $F$ is $u$-upper bounded on $U$ if $F[U]$ is $K$- upper bounded. Equivalently, there exists $\mu >0$ such that
		
		$$F[U]\subseteq \mu e - K.$$
		
\item $F$ is $u$-lower bounded on $U$ if there exists a constant $\mu > 0$ such that:
		
		$$\forall\; x\in U: \; F(x)\cap (-\mu e + K) \in [F(x)]^{(u)}.$$
		
\item $F$ is $u$-bounded on $U$ if $F$ is $u$-upper bounded and $u$-lower bounded on $U$, it means that there exists a constant $\mu > 0$ such that:
		
		$$\forall\; x\in U: \; F(x)\cap [-\mu e,\; \mu e]\in [F(x)]^{(u)}.$$
		
\item $F$ is locally $u$-(upper, lower) bounded at $\bar{x}$ if it is $u$- (upper, lower) bounded on a neighborhood $U$ of $\bar{x}.$
		
	\end{enumerate}
	
\end{Definition}

In the following definition, we introduce different topological notions of a set-valued mapping.

\begin{Definition}
	
Let Assumption \ref{ass 1}  be fulfilled. We say that:

\begin{enumerate}

\item $F$ is locally bounded at $\bar{x}$ if there exists $L>0$ and a neighborhood $U$ of $\bar{x}$ such that

$$F[U] \subseteq L \Bbb B.$$ 

\item $F$ is locally Lipschitzian at $\bar{x}$ if there is a neighborhood $U$ of $\bar{x}$ and a constant $\ell\geq 0$ such that

\begin{equation}
\forall\; x,x^{\prime}\in U: \qquad F(x) \subseteq F(x^{\prime})+ \ell\| x-x^{\prime}\|\Bbb B. \label{eq3}%
\end{equation}

%
%

\item  $F$ is inner semicompact at $\bar{x} \in \dom F$ if for every sequence $x_k \rightarrow \bar{x}$ there is a sequence $y_k \in F(x_k)$ that contains a convergent subsequence as $k \rightarrow \infty$. In particular, $\bar{x} \in \Int \dom F.$

\item $F$ is closed at $\bar{x}$ if, for any sequence $\{(x_k,y_k)\}_{k\geq 1} \subseteq \gph F$ with $(x_k,y_k) \to (\bar{x},\bar{y}),$  we have $(\bar{x},\bar{y})\in \gph F.$
\end{enumerate}

\end{Definition}
%
%

For a scalar function $f:X\rightarrow \overline{\R}$, the domain\index{domain} and epigraph of $f$ are given by
\[
\operatorname{dom}f:=\{x\in X\mid f(x)<+\infty\},
\]

\[
\epi f:=\{(x,t)\in 
X\times \R \mid f(x)\leq t\}.
\]
Recall that a function $f: X\rightarrow \R$ is convex if $\epi f$ is a convex set. The function $f$ is said to be Lipschitzian on $A$ provided that $f$ is finite on $A$ and there exists $\ell>0$ such that
\[
	\forall \, x,x' \in A:\;|f(x)-f(x')|\leq \ell \| x-x'\|_X.
\] This is also referred to as a Lipschitzian condition of rank $\ell$. We say that $f$ is locally Lipschitzian at $\bar{x}$ if there is a neighborhood $U$ of $\bar{x}$ such that $f$ is Lipschitzian on $U$. In addition, $f$ is said to be locally Lipschitzian on $A$, if $f$ is locally Lipschitzian at every point $x\in A$. Hence, in this case, $A\subseteq\Int\operatorname*{dom}f.$

We now introduce the tools from variational analysis that will be employed in the text. First, we need a notion of limits of sets. For a set-valued mapping $F:X\rightrightarrows X^{*}$, we define the
Painlev\'{e}-Kuratowski outer limit of $F$ at $\bar{x}$ with respect to the
norm topology of $X$ and the $w^*$- topology of $X^{*}$ by
\[
\underset{x\rightarrow\bar{x}}{\limsup}\;F(x):=\{x^{*}\in X^{*}\mid\forall\;
k\in\mathbb{N},\ \exists\;(x_{k},x_{k}^{*})\in\operatorname*{gph}%
F:x_{k}\rightarrow\bar{x},\text{ }x_{k}^{*}\xrightarrow{w^*}x^{*}\}.
\]

In the next definition, we use the notation $x^{\prime}\overset{\Omega} {\longrightarrow}x$ for $x^{\prime}\rightarrow x$ with $x^{\prime}\in\Omega$.

\begin{Definition} \label{def: lipnorcon}
(\cite[Definition 1.1]{Mordukhovich1}) 
Let Assumption \ref{ass 1}  be fulfilled.
\begin{enumerate}

\item Given $x \in X$ and $\epsilon\geq 0$, the set of $\epsilon$-normals to $\Omega$ at $x$ is defined by 
\begin{equation}
\hat{N}_\epsilon(x,\Omega):=\left\{x^*\in X^*\mid\underset
{u\overset{\Omega}{\longrightarrow}x}{\limsup}\frac{\langle x^{*},u-x\rangle}{\left\Vert
u-x\right\Vert }\leq\epsilon\right\}.
\label{eqnor1}
\end{equation}
When $\epsilon =0$, the set \eqref{eqnor1} is called the
Fr\'{e}chet normal cone to $\Omega$ at $x$, and is denoted by $\hat{N}(x,\Omega)$. If $x\notin\Omega$, we put $\hat{N}_\epsilon(x,\Omega):=\emptyset$ for all $\epsilon\geq 0$.

\item The limiting normal cone to $\Omega$ at $\bar{x}$ is defined by
\begin{equation}
N(\bar{x},\Omega):=\underset{\epsilon\downarrow 0}{\underset{x\rightarrow\bar{x}}{\limsup}}\hat{N}_\epsilon(x,\Omega).
\label{eqnor2}
\end{equation}
We also put $N(\bar{x},\Omega):=\emptyset$ for $\bar{x}\notin \Omega$.
\end{enumerate}
\end{Definition}

\begin{Definition} \label{defmorsub}(\cite[Definition 1.77]
{Mordukhovich1}) Let Assumption \ref{ass 1}  be fulfilled. The limiting subdifferential of a given functional $f:X\rightarrow \mathbb{R}\cup\{\pm\infty\}$ at a point $\bar{x}$ with
$|f(\bar{x})|<+\infty$ is defined by
\[
\partial f(\bar{x}):=\big\{x^{*}\in X^{*}\mid(x^{*},-1)\in
N \big((\bar{x},f(\bar{x})),\operatorname*{epi}f\big)\}.
\]
We put $\partial f(\bar{x}):=\emptyset$ if $|f(\bar{x})|=+\infty$.
\end{Definition}

\begin{Remark}
It is well known, see \cite[Theorem 1.93]{Mordukhovich1}, that if $f$ is convex and finite at $\bar{x}$, then

$$\partial f(\bar{x})=\{x^{*}\in X^*\mid\forall\; x\in X : f(x)-f(\bar{x})\geq \langle x^*, x-\bar{x} \rangle\ \},$$ and hence $\partial f(\bar{x})$ coincides with the subdifferential of convex analysis. In case $\Omega$ is a convex set, we also have \cite[Proposition 1.5]{Mordukhovich1}:

$$N(\bar{x},\Omega)=\{x^*\in X^*\mid \forall\; x\in \Omega:  \langle x^*, x-\bar{x} \rangle\leq 0\},$$ and hence $N(\bar{x},\Omega)$ equals the normal cone in the sense of convex analysis.
\end{Remark}

\begin{Remark}\label{rem: subdif -f}
Note that, if in addition to Assumption \ref{ass 1} the space $X$ is Asplund and $f:X \rightarrow \R$ is locally Lipschitzian at $\bar{x},$ the following relation holds: 

$$\partial(-f)(\bar{x}) \subseteq -\overline{\operatorname{conv}}^* \left(\partial f(\bar{x})\right).$$ 

Indeed, taking into account \cite[\textrm{Theorem 3.57}]{Mordukhovich1}, \cite[\textrm{Proposition 2.3.1}]{Clarke1} and \cite[\textrm{Theorem 3.57}]{Mordukhovich1}, we obtain

\begin{eqnarray*}
\partial(-f)(\bar{x}) & \subseteq &\overline{\operatorname{conv}}^* \left(\partial (-f)(\bar{x})\right)\\
                        &  {=} & \partial^\circ (-f)(\bar{x})\\
                        &  {=} & - \partial^\circ f(\bar{x})\\
                        &  {=} & -\overline{\operatorname{conv}}^* \left(\partial f(\bar{x})\right).
\end{eqnarray*} Here, $\partial^\circ f$ represents Clarke's subdifferential of $f,$ see \cite{Clarke1} for details.
\end{Remark}

We continue by defining the basic coderivative of a set-valued mapping at a point of its graph. 
\begin{Definition} \label{defcord}(\cite[Definition 1.32]
{Mordukhovich1}) Let Assumption \ref{ass 1}  be fulfilled. The basic coderivative of $F$ at $(\bar{x},\bar{y})\in \gph F$ is the multifunction $D^* F(\bar{x},\bar{y}): Y^*\rightrightarrows X^*$ defined by
\begin{equation}
D^{*}F(\bar{x},\bar{y})(y^{*})=\big\{x^{*}\in X^{*}\mid(x^{*
},-y^{*})\in N\big((\bar{x},\bar{y}),\operatorname*{gph}F\big)\}.
\label{cod}%
\end{equation}
We put $D^* F(\bar{x},\bar{y})(y^*):=\emptyset$ for all $y^*\in Y^*$ if $(\bar{x},\bar{y})\notin \gph F$. 

\end{Definition}

\begin{Remark}\label{rem: coderivative of funct}
We can omit $\bar{y}$ in the coderivative notation above
if $F=f: X\rightarrow Y$ is a vector-valued function. It can be shown  \cite[Theorem 1.38]{Mordukhovich1},  that if $f$ is continuously differentiable at $\bar{x}$, then
\[
D^{*}f(\bar{x})(y^{*})=\big\{\nabla  f(\bar{x})^{*}y^{*} \} \quad\text{for all}\quad y^{*}\in Y^{*}.
\] In the above equation, $\nabla  f(\bar{x})^{*}:Y^* \to X^*$ denotes the adjoint operator of $\nabla  f(\bar{x}).$

\end{Remark}

%
%

In the last part of the section, we quickly recall two results concerning the subdifferential of marginal functions. This problem is naturally linked to the computation of the subdifferentials of the scalarization functionals, as we will see in the rest of the sections. The setting is as follows:

\begin{Assumption}\label{ass marginal}
In addition to Assumption \ref{ass 1}, let $f:X\times Y \to \overline{\R}$ be a given functional and consider the associated marginal function $\varphi:X\to \overline{\R}$ defined as

\begin{equation*}\label{eq: marginal function}
\varphi(x):= \inf_{y\in F(x)} f(x,y).
\end{equation*} 
Furthermore, consider the solution map $S:X\rightrightarrows Y$ defined as 

\begin{equation*}\label{eq: solution map}
S(x)= \{y\in F(x): f(x,y)= \varphi(x)\}.
\end{equation*} 

\end{Assumption}

The first of the results is concerned with the subdifferential of $\varphi$ in the case that $F$ and $f$ are assumed to be convex. 

\begin{Theorem}(\cite[Theorem 4.2]{AY2015})\label{thm: convex inf functions subdif} Let Assumption \ref{ass marginal} be fulfilled. Suppose in addition that $\gph F$ is convex,  $f$ is a proper and convex function, and that at least one of the following regularity conditions is satisfied: 
	\begin{enumerate}
	\item $\Int\gph F\cap \dom f\neq \emptyset$,  

\item $f$ is continuous at a point $(x^0,y^0)\in \gph F$.
\end{enumerate}
Then, $\varphi$ is convex and, for any $\bar{x} \in \dom \varphi$ with $\varphi(\bar{x})\neq-\infty$ and any $\bar{y}\in S(\bar{x}),$ we have

$$\partial\varphi(\bar{x})= \bigcup_{(x^*,y^*)\in \partial f(\bar{x},\bar{y})}  \bigg[ x^*+ D^* F(\bar{x},\bar{y})(y^*) \bigg].$$

\end{Theorem}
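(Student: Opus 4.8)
The plan is to prove this marginal function subdifferential formula by exploiting convexity throughout and reducing the problem to a standard subdifferential sum/chain rule in the product space $X \times Y$. First I would establish the convexity of $\varphi$: since $\gph F$ is convex and $f$ is proper convex on $X \times Y$, the function $\varphi(x) = \inf_{y \in F(x)} f(x,y)$ is the infimal projection (partial minimization) of the convex function $g(x,y) := f(x,y) + \delta_{\gph F}(x,y)$, where $\delta_{\gph F}$ is the indicator of $\gph F$. Partial minimization of a jointly convex function yields a convex function, so $\varphi$ is convex; the regularity conditions (i) or (ii) serve to guarantee that the relevant conjugate/duality operations behave well and, in particular, that $\varphi$ is proper and that the infimal convolution or sum rule applies without a duality gap.

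Next I would pass to the subdifferential. Fixing $\bar{x} \in \dom\varphi$ with $\varphi(\bar{x}) \neq -\infty$ and $\bar{y} \in S(\bar{x})$, the key observation is that $\bar{y}$ attains the infimum, so $g(\bar{x},\bar{y}) = \varphi(\bar{x})$. For partial minimization, the standard convex-analytic fact is that
\begin{equation*}
\partial \varphi(\bar{x}) = \{x^* \in X^* \mid (x^*,0) \in \partial g(\bar{x},\bar{y})\}.
\end{equation*}
It then remains to compute $\partial g(\bar{x},\bar{y}) = \partial\big(f + \delta_{\gph F}\big)(\bar{x},\bar{y})$. Under either regularity condition (i) or (ii), the Moreau--Rockafellar sum rule holds, giving
\begin{equation*}
\partial g(\bar{x},\bar{y}) = \partial f(\bar{x},\bar{y}) + N\big((\bar{x},\bar{y}),\gph F\big),
\end{equation*}
where I use that in the convex case $\partial \delta_{\gph F} = N(\cdot,\gph F)$, the normal cone of convex analysis (as recorded in the Remark following Definition \ref{defmorsub}).

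The final step is to translate the condition $(x^*,0) \in \partial f(\bar{x},\bar{y}) + N((\bar{x},\bar{y}),\gph F)$ into the claimed union. Writing $(x^*,0) = (x_1^*,y^*) + (x_2^*, -y^*)$ with $(x_1^*,y^*) \in \partial f(\bar{x},\bar{y})$ and $(x_2^*,-y^*) \in N((\bar{x},\bar{y}),\gph F)$, the defining relation of the coderivative in Definition \ref{defcord} says precisely that $x_2^* \in D^* F(\bar{x},\bar{y})(y^*)$. Hence $x^* = x_1^* + x_2^*$ ranges exactly over $\bigcup_{(x_1^*,y^*) \in \partial f(\bar{x},\bar{y})} \big[x_1^* + D^* F(\bar{x},\bar{y})(y^*)\big]$, which is the asserted formula. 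I expect the main obstacle to be justifying the partial-minimization subdifferential identity and the sum rule without a duality gap: this is exactly where the two regularity conditions enter, and care is needed to verify that the attainment $\bar{y} \in S(\bar{x})$ together with (i) or (ii) legitimately licenses both the interchange in the partial minimization and the exactness of the Moreau--Rockafellar decomposition in the Banach (possibly infinite-dimensional) setting. Since the statement is cited from \cite[Theorem 4.2]{AY2015}, I would ultimately lean on that reference for the technical qualification arguments, using the sketch above to organize the proof.
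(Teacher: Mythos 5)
The paper contains no proof of this statement at all --- it is imported verbatim from \cite[Theorem 4.2]{AY2015}, so there is no internal argument to compare against; your proposal has to be judged on its own merits, and on those merits it is correct. Your route is the standard convex-analytic one: write $\varphi$ as the partial minimization of $g:=f+\delta_{\gph F}$, use the identity $\partial\varphi(\bar{x})=\{x^*\in X^*\mid (x^*,0)\in\partial g(\bar{x},\bar{y})\}$ at a minimizer $\bar{y}\in S(\bar{x})$, split $\partial g(\bar{x},\bar{y})=\partial f(\bar{x},\bar{y})+N((\bar{x},\bar{y}),\gph F)$ by Moreau--Rockafellar, and read off $x_2^*\in D^*F(\bar{x},\bar{y})(y^*)$ from $(x_2^*,-y^*)\in N((\bar{x},\bar{y}),\gph F)$, which is legitimate here because for convex sets and functions the limiting normal cone and subdifferential used in the paper coincide with their convex-analysis counterparts. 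Two clarifications would tighten the write-up. First, the partial-minimization identity involves no duality gap and needs no regularity whatsoever: if $x^*\in\partial\varphi(\bar{x})$ then $g(x,y)\geq\varphi(x)\geq\varphi(\bar{x})+\langle x^*,x-\bar{x}\rangle=g(\bar{x},\bar{y})+\langle x^*,x-\bar{x}\rangle$ for all $(x,y)$, and conversely taking the infimum over $y$ in the subgradient inequality for $g$ recovers the one for $\varphi$; all that is used is that $\bar{y}$ attains the infimum and that $g(\bar{x},\bar{y})=\varphi(\bar{x})$ is finite (which also gives properness of $g$). Second, the regularity conditions enter at exactly one place, the sum rule: condition (i) says $\delta_{\gph F}$ is continuous (locally identically zero) at a point of $\dom f$, and condition (ii) says $f$ is continuous at a point of $\dom\delta_{\gph F}=\gph F$, and either suffices for Moreau--Rockafellar in a Banach space. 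So your closing deference to \cite{AY2015} for the ``technical qualification arguments'' is unnecessary: with these routine verifications filled in, your sketch is already a complete proof.
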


For nonconvex $F$, many results already exist in the literature. We conclude by establishing a weaker version of \cite[Theorem 3.38 $(ii)$]{Mordukhovich1}, which will be enough for our purposes. The proof is omitted since it is easy to verify that our assumptions imply those of \cite[Theorem 3.38 $(ii)$]{Mordukhovich1}. Recall that a set $A$ is locally closed around a point $\bar{z} \in A$ if there exists a neighborhood  $V$ of $\bar{z}$ such that $A\cap V$ is a closed set.

\begin{Theorem}\label{thm: basic subdif marginal functions theorem}
In addition to Assumption \ref{ass marginal}, suppose that $X$ and $Y$ are Asplund spaces. Furthermore, assume that:

\begin{enumerate}

\item $F$ is closed at $\bar{x},$

\item $S$ is inner semicompact at $\bar{x},$

\item there exists a neighborhood $U$ of $\bar{x}$ such that $f$ is Lipschitzian on $U\times Y,$

\item $\gph F$ is locally closed around every point of the set $\{\bar{x}\} \times S(\bar{x}).$

\end{enumerate} Then,
\begin{equation}\label{eq: upper estimate marginal}
\partial \varphi(\bar{x})\subseteq  \bigcup_{\underset{(x^*,y^*)\in \partial f(\bar{x},\bar{y})}{\bar{y}\in S(\bar{x})}}  \bigg[ x^*+ D^* F(\bar{x},\bar{y})(y^*) \bigg].
\end{equation}

\end{Theorem}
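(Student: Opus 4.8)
The plan is to present the result as a direct specialization of \cite[Theorem 3.38 $(ii)$]{Mordukhovich1}, whose conclusion is exactly the upper estimate \eqref{eq: upper estimate marginal}. The whole task therefore reduces to checking that the four hypotheses imposed here imply the (more general, but more technical) assumptions of that reference. First I would recall those assumptions for the marginal function $\varphi(x)=\inf_{y\in F(x)}f(x,y)$ with solution map $S$: that $X,Y$ be Asplund, that $S$ be inner semicompact at $\bar x$, that $\gph F$ be locally closed around the points of $\{\bar x\}\times S(\bar x)$, that $f$ be lower semicontinuous around each $(\bar x,\bar y)$ with $\bar y\in S(\bar x)$, together with a coderivative qualification condition and a partial sequential normal compactness (PSNC/SNC) requirement on either $f$ or $\gph F$ at these points.

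Matching the easy hypotheses is immediate: the Asplund property, inner semicompactness of $S$, and local closedness of $\gph F$ are assumed verbatim, while the closedness of $F$ at $\bar x$ guarantees that each relevant $(\bar x,\bar y)$ with $\bar y\in S(\bar x)$ genuinely lies in $\gph F$ and is compatible with the local closedness requirement. The only nonroutine points are the qualification condition and the normal-compactness condition, and both should follow from a single fact, namely the Lipschitz continuity of $f$ on $U\times Y$.

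Concretely, since $f$ is Lipschitzian on the product $U\times Y$, it is in particular locally Lipschitzian around every $(\bar x,\bar y)$ with $\bar y\in S(\bar x)$. For such functions the singular (horizontal) subdifferential collapses, $\partial^\infty f(\bar x,\bar y)=\{0\}$, so the coderivative qualification condition---an intersection of $\partial^\infty f(\bar x,\bar y)$ with a normal-cone image---holds trivially. Moreover, a locally Lipschitzian real-valued function on an Asplund space is automatically sequentially normally compact at the point in question; hence the SNC branch of the compactness requirement is met by $f$, and \emph{no} sequential normal compactness, and in particular no compactness of the images, needs to be imposed on $F$. This is exactly the mechanism that lets us dispense with the compactness hypotheses present in \cite{amahroqoussarhan2019,HueJimNov2020}. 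With all hypotheses of \cite[Theorem 3.38 $(ii)$]{Mordukhovich1} verified, its conclusion yields \eqref{eq: upper estimate marginal} directly.

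The main obstacle I anticipate is purely bookkeeping: pinning down the precise form in which the qualification condition and the PSNC/SNC requirement are stated in \cite[Theorem 3.38 $(ii)$]{Mordukhovich1} (these are notation-heavy and appear in several equivalent variants), and checking that the uniform Lipschitz property of $f$ over all of $Y$---rather than merely on a neighborhood of each $(\bar x,\bar y)$---interacts correctly with the inner semicompactness of $S$, so that the union over $\bar y\in S(\bar x)$ on the right-hand side is the correct object and the coderivative sum $x^*+D^*F(\bar x,\bar y)(y^*)$ matches the formula of the reference. Once the statement of \cite{Mordukhovich1} is transcribed precisely, each verification is a one-line consequence of Lipschitzianity, and the proof is complete.
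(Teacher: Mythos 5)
Your proposal is correct and takes essentially the same approach as the paper: the paper in fact omits the proof entirely, stating only that it is easy to verify that the four hypotheses imply those of \cite[Theorem 3.38 $(ii)$]{Mordukhovich1}, which is exactly the reduction you carry out. Your filled-in verification (Lipschitz continuity of $f$ forces $\partial^\infty f(\bar{x},\bar{y})=\{0\}$, so the qualification condition holds, and it also gives the SNEC-type normal compactness, while the Asplund, inner semicompactness, and local closedness hypotheses transfer verbatim) is precisely the ``easy verification'' the paper alludes to.
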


\section{Convexity and Lipschitzianity of the Scalarizing Functionals in Set Optimization} \label{sec: scalarizing function}

With the purpose of deriving optimality conditions for set optimization problems we introduce in this section, for a given set-valued mapping $F$, two associated scalarizing functionals. 

We proceed to show that certain nonlinear scalarizing functionals inherit the convexity and Lipschitzianity of $F.$ First, under Assumption \ref{ass 1}, we consider the functional $\Psi_e: Y\rightarrow \bar{\Bbb R}$ given by:

\begin{equation}\label{eq: gerstewitz funct}
\Psi_e(y):=\operatorname{inf}\{t\in\Bbb R\mid y \in te-K\}.
\end{equation}

Recent results concerning characterizations of set relations by scalarizing functionals and corresponding subdifferentials are given in \cite{BaoTam2018,BaoTam2019}. 

The nonlinear scalarizing functional $\Psi_e$ has been widely applied in vector optimization \cite{TW1990,GopfertRiahiTammerZalinescu2003,KTZ} and in the next proposition we collect several well known properties of $\Psi_e$ that will be useful later in this work. Recall that a functional $g:Y \to \R$ is said to be $K$- monotone if $y_1,y_2\in Y,y_1\preceq_K y_2\Rightarrow g(y_1)\leq g(y_2)$. Moreover, we say that $g$ is strictly $K$-monotone if $y_1\prec_K y_2\Rightarrow g(y_1)<g(y_2).$
\begin{Proposition}[\cite{GopfertRiahiTammerZalinescu2003,DT2009}]\label{prop: properties of tammer function} 
Let Assumption \ref{ass 1} be fulfilled. Then:
\begin{enumerate}
		\item $\Psi_e$ is a finite-valued sublinear function, i.e., $\Psi_e(y_1+y_2)\leq \Psi_e(y_1)+\Psi_e(y_2)$ and $\Psi_e(\lambda y)=\lambda \Psi_e(y)$ for all $\lambda > 0$ and $y_1,y_2\in Y$,
		\item $\Psi_e$  is Lipschitzian on $Y,$
		\item  $\Psi_e$ satisfies the translativity property, i.e., $\Psi_e(y+te)=\Psi_e(y)+t$ for all $t\in \Bbb R$ and $y\in Y,$
		\item  $\Psi_e$ is $K$- monotone and strictly $K$-monotone,
		\item  $\partial \Psi_e(\bar{y})=\{ k^*\in K^*\mid \langle k^*,e\rangle =1, \Psi_e(\bar{y})=\langle k^*,\bar{y}\rangle\}, $
		
		\item $\Psi_e$ satisfies the representability property, i.e., 
		
		$$-K= \{y \in Y \mid \Psi_e(y)\leq 0 \}, \quad - \Int K = \{y \in Y \mid \Psi_e(y)< 0 \}.$$
	
\end{enumerate}
\end{Proposition}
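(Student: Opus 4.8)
The plan is to verify the six items essentially directly from the definition \eqref{eq: gerstewitz funct}, exploiting the cone arithmetic afforded by Assumption \ref{ass 1} (namely $K+K\subseteq K$, $\lambda K=K$ for $\lambda>0$, $K+\Int K\subseteq\Int K$, and $K\cap(-K)=\{0\}$). Before addressing the individual claims I would record two preliminary facts on which everything rests. First, for fixed $y$ the set $T(y):=\{t\in\mathbb{R}\mid y\in te-K\}$ is nonempty, closed and upward closed: nonemptiness holds because $e\in\Int K$ forces $te-y\in K$ for $t$ large; closedness holds since $T(y)$ is the preimage of the closed set $K$ under the continuous map $t\mapsto te-y$; and upward closedness follows from $te-y\in K\Rightarrow t'e-y=(te-y)+(t'-t)e\in K+K\subseteq K$ for $t'\geq t$. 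Second, $T(y)$ is bounded below: since $K$ is a proper closed convex cone there is $k^*\in K^*\setminus\{0\}$, and as $e\in\Int K$ one has $\langle k^*,e\rangle>0$; pairing $te-y\in K$ with $k^*$ yields $t\geq\langle k^*,y\rangle/\langle k^*,e\rangle$. Consequently $\Psi_e(y)\in\mathbb{R}$ and, by closedness, the infimum is attained, i.e.\ $y\in\Psi_e(y)e-K$. This attainment is used repeatedly.

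With these in hand, items (i), (iii) and (vi) are immediate. For subadditivity I would add the relations $y_i\in t_ie-K$ and use $K+K\subseteq K$; positive homogeneity follows from $\lambda K=K$; finiteness was just shown. Translativity (iii) is the change of variable $t\mapsto t+s$ in the identity $T(y+se)=s+T(y)$. For (vi), $\Psi_e(y)\leq 0$ together with attainment gives $-y=k+|\Psi_e(y)|e\in K$, whence $y\in-K$, and conversely $y\in-K$ gives $0\in T(y)$; the strict version uses $K+\Int K\subseteq\Int K$ to turn $\Psi_e(y)<0$ into $-y\in\Int K$ and, in the other direction, the openness of $\Int K$ to lower the level strictly below $0$.

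For (ii), I would first use subadditivity to reduce the Lipschitz estimate to a bound on $\Psi_e$ itself, via $\Psi_e(y_1)-\Psi_e(y_2)\leq\Psi_e(y_1-y_2)$. Since $e\in\Int K$ there is $r>0$ with $e+r\mathbb{B}\subseteq K$, so $\|z\|\leq r$ implies $z\in e-K$ and thus $\Psi_e(z)\leq 1$; positive homogeneity then gives $\Psi_e(z)\leq\|z\|/r$ for all $z$, and the symmetric argument yields $|\Psi_e(y_1)-\Psi_e(y_2)|\leq\|y_1-y_2\|/r$. Monotonicity (iv) is the inclusion $T(y_2)\subseteq T(y_1)$ when $y_2-y_1\in K$ (again via $K+K\subseteq K$); for strict monotonicity I would take $t_2=\Psi_e(y_2)$, use attainment together with $y_2-y_1\in\Int K$ to get $t_2e-y_1\in\Int K$, and then openness of $\Int K$ to find $\varepsilon>0$ with $(t_2-\varepsilon)e-y_1\in K$, giving $\Psi_e(y_1)\leq t_2-\varepsilon<\Psi_e(y_2)$.

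The one item requiring more than bookkeeping is (v). Because $\Psi_e$ is finite, sublinear and (by (ii)) continuous, it is convex and its limiting subdifferential coincides with the convex one (cf.\ the Remark following Definition \ref{defmorsub}), so $\partial\Psi_e(\bar y)=\{k^*\in\partial\Psi_e(0)\mid\langle k^*,\bar y\rangle=\Psi_e(\bar y)\}$, and it remains to compute $\partial\Psi_e(0)=\{k^*\mid\langle k^*,y\rangle\leq\Psi_e(y)\ \forall y\}$. The plan is to show this set equals $\{k^*\in K^*\mid\langle k^*,e\rangle=1\}$. For the inclusion ``$\subseteq$'', testing against $y=-k$ with $k\in K$ (where $\Psi_e(-k)\leq 0$) gives $k^*\in K^*$, while testing against $y=\pm e$ (where $\Psi_e(e)=1$ and $\Psi_e(-e)=-1$, both from pointedness) pins down $\langle k^*,e\rangle=1$. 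For ``$\supseteq$'', if $k^*\in K^*$ with $\langle k^*,e\rangle=1$, then pairing the attained relation $y\in\Psi_e(y)e-K$ with $k^*$ gives $\langle k^*,y\rangle\leq\Psi_e(y)$. I expect this identification of $\partial\Psi_e(0)$, together with establishing lower-boundedness and attainment in the preliminaries, to be the only genuinely delicate points; everything else is cone arithmetic. Since the statement is classical, an alternative is simply to invoke \cite{GopfertRiahiTammerZalinescu2003,DT2009}, but the route above is self-contained.
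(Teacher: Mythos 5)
Your proposal is correct, but it does not follow the paper's route for the simple reason that the paper has none: Proposition 3.1 is stated as a known result and "proved" by citation to \cite{GopfertRiahiTammerZalinescu2003,DT2009}, where these properties of $\Psi_e$ are classical. Your argument is a self-contained reconstruction of essentially those textbook proofs: the preliminary facts (the level set $T(y)=\{t\mid y\in te-K\}$ is nonempty, closed, upward closed, bounded below, so the infimum is finite and attained) are exactly the backbone of the standard treatment, and your cone-arithmetic verifications of (i), (iii), (iv), (vi), the $1/r$-Lipschitz estimate in (ii), and the identification $\partial\Psi_e(0)=\{k^*\in K^*\mid\langle k^*,e\rangle=1\}$ in (v) are all sound --- including the correct use of pointedness/properness to get $\Psi_e(\pm e)=\pm 1$, of attainment to get the ``$\supseteq$'' inclusion, and of the paper's remark after Definition 2.8 to identify the limiting and convex subdifferentials. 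What the citation buys the paper is brevity for facts that are genuinely standard; what your derivation buys is self-containedness and a clear accounting of which hypotheses on $K$ (properness, pointedness, solidity, closedness) enter at which step. Two small points you assert rather than prove, both routine: the existence of $k^*\in K^*\setminus\{0\}$ (a separation argument using that $K$ is a proper closed convex cone), and the identity $\partial\Psi_e(\bar y)=\{k^*\in\partial\Psi_e(0)\mid\langle k^*,\bar y\rangle=\Psi_e(\bar y)\}$ for a continuous sublinear function, whose two-line proof (test $y=0$ and $y=2\bar y$) would be worth including if you wanted the argument fully airtight.
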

\begin{Remark}
According to $(vi),$ for any $\bar{y} \in -\bd K$ we have $\Psi_e(\bar{y}) = 0.$ Then, it follows from $(v)$ that $\partial \Psi_e(\bar{y}) = \{k^*\in K^*\mid \langle k^*,e\rangle =1, \langle k^*,\bar{y}\rangle = 0\}.$ This simple fact is important to keep in mind for the results that will be obtained later.

\end{Remark}

In \cite{KoeTam15,KK2016,KKY2016}, a complete characterization of set order relations by means of a nonlinear scalarizing functional was shown. There, the main result is the following:

\begin{Theorem}(\cite{KoeTam15,KK2016,KKY2016})\label{thm: characterization of set relations by scalarizing functionals}
Let Assumption \ref{ass 1}  be fulfilled and consider $A,B \subseteq Y.$ Then,
\begin{enumerate}

\item $A\preceq^{(l)}_K B \Longrightarrow \sup\limits_{b\in B}\inf\limits_{a\in A} \Psi_e(a-b)\leq 0.$

\item $A\preceq^{(u)}_K B \Longrightarrow \sup\limits_{a\in A}\inf\limits_{b\in B} \Psi_e(a-b)\leq 0.$

\end{enumerate}

\end{Theorem}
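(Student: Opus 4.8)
The plan is to prove both implications directly from the definitions of the set relations in Definition \ref{def-set-relation} together with the representability property (vi) of Proposition \ref{prop: properties of tammer function}, namely $-K = \{y \in Y \mid \Psi_e(y) \leq 0\}$. In each case the quantity to be bounded is an iterated supremum of an infimum, so it suffices to show that, for every fixed ``outer'' element, the inner infimum is nonpositive. Since bounding an infimum from above only requires exhibiting one admissible point with a nonpositive value, the whole argument reduces to selecting, for each fixed element, a witness whose difference lands in $-K$ and then invoking representability.

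For part (i), I would assume $A \preceq^{(l)}_K B$, which means $B \subseteq A + K$. Fixing an arbitrary $b \in B$, this inclusion furnishes some $a_0 \in A$ and $k \in K$ with $b = a_0 + k$, so that $a_0 - b = -k \in -K$. Property (vi) then gives $\Psi_e(a_0 - b) \leq 0$, whence $\inf_{a \in A} \Psi_e(a - b) \leq \Psi_e(a_0 - b) \leq 0$. As $b \in B$ was arbitrary, passing to the supremum over $b \in B$ yields the claim.

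For part (ii), the argument is completely symmetric. Assuming $A \preceq^{(u)}_K B$, i.e. $A \subseteq B - K$, I would fix an arbitrary $a \in A$ and use the inclusion to write $a = b_0 - k$ with $b_0 \in B$ and $k \in K$, so that $a - b_0 = -k \in -K$ and hence $\Psi_e(a - b_0) \leq 0$ by (vi). This bounds $\inf_{b \in B} \Psi_e(a - b) \leq 0$, and taking the supremum over $a \in A$ completes the proof.

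Since the heart of the argument is a one-line application of representability, there is no substantial obstacle; the only points needing mild care are the degenerate cases. If the outer index set is empty (for instance $B = \emptyset$ in (i)), the supremum equals $-\infty$ and the inequality is trivial; and if $A = \emptyset$ in (i), the hypothesis $B \subseteq A + K = \emptyset$ forces $B = \emptyset$, reducing to the previous case, with the analogous reduction for (ii). I would dispose of these in a single sentence, as the nonempty case already carries the entire proof. Note finally that only the stated one-sided implications are needed here; the reverse directions (the full characterization in the cited references) would require additional attainment or compactness hypotheses and are not claimed in this statement.
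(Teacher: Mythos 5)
Your proof is correct. Note that the paper itself gives no proof of this theorem --- it is imported verbatim from the cited references \cite{KoeTam15,KK2016,KKY2016} --- so there is nothing internal to compare against; your argument (fix the outer element, extract a witness from the set inclusion, and apply the representability property $-K=\{y\in Y \mid \Psi_e(y)\leq 0\}$ of Proposition \ref{prop: properties of tammer function} $(vi)$ to bound the inner infimum) is exactly the standard direct argument for these one-sided implications, and your handling of the empty-set degeneracies is fine.
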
 

The previous theorem motivates our next definition.  

\begin{Definition}\label{def- scalarization} 

Let Assumption \ref{ass 1}  be fulfilled. 

\begin{enumerate}

\item The lower inner function $g_{l}:X \times Y \to  \bar{\Bbb R}$ is defined as 

\begin{equation}\label{lower_function}
  g_{l}(x,z):= \inf_{y\in F(x)} \Psi_e(y-z).
\end{equation}
\item The upper inner function $g_{u,\bar{x}}: Y \to \bar{\Bbb R}$ is defined as

\begin{equation}\label{upper_function}
g_{u,\bar{x}}(y):= \inf_{\bar{y}\in F(\bar{x})}\Psi_e(y-\bar{y}).
\end{equation}
\item For $r\in \{l,u\},$ the scalarizing functional $f_{r,\bar{x}}:X\to \bar{\R}$ is defined as follows: 

\begin{equation}\label{eq f_r}
f_{r,\bar{x}}(x):= \left\{
\begin{array}{ll}
   \sup\limits_{\bar{y}\in F(\bar{x})}g_{l}(x,\bar{y})=\sup\limits_{\bar{y}\in F(\bar{x})}\inf\limits_{y\in F(x)} \Psi_e(y- \bar{y})   & \textrm{ if } r=l, \\
   \sup\limits_{y\in F(x)} g_{u,\bar{x}}(y)=\sup\limits_{y\in F(x)}\inf\limits_{\bar{y}\in F(\bar{x})} \Psi_e(y- \bar{y})   & \textrm{ if } r=u. \\
\end{array} 
\right. 
\end{equation}

\end{enumerate}

\end{Definition}

As mentioned at the begining of the section, we now show that for the $\preceq_K^{(l)}$ and $\preceq_K^{(u)}$ relations, the corresponding scalarizing functional inherits the convexity property of the set-valued mapping. We start with a simple proposition.

\begin{Proposition}\label{prop: g =0}
Let Assumption \ref{ass 1}  be fulfilled and consider the  functionals given in Definition \ref{def- scalarization}. Then, the following statements are true:
\begin{enumerate}
\item For every $x \in X,$ the functional $g_l(x,\cdot)$ is $-K$- monotone. Furthermore, for $\bar{y} \in F(\bar{x}),$ we have that $g_{l}(\bar{x},\bar{y}) = 0$ if and only if $\bar{y} \in \WMin(F(\bar{x}),K).$

\item The functional $g_{u,\bar{x}}$ is $K$-monotone. Furthermore, for $y \in  F(\bar{x}),$ we have that $g_{u,\bar{x}}(y) = 0$ if and only if $y \in \WMax(F(\bar{x}),K).$ 

\item  For any $r\in \{l,u\},$ we have $f_{r,\bar{x}}(\bar{x})\leq 0.$ Equality holds if $r=l$ and $\WMin(F(\bar{x}),K)\neq \emptyset,$ or $r=u$ and $\WMax(F(\bar{x}),K)\neq \emptyset.$
\end{enumerate}
\begin{proof}
$(i)$  The monotonicity of $g_l(x,\cdot)$ follows directly from the monotonicity of $\Psi_e.$ Now, fix $\bar{y} \in F(\bar{x}).$ Then, we have $g_{l}(\bar{x},\bar{y})\leq 0$ and hence

\begin{eqnarray*}
g_{l}(\bar{x},\bar{y})= 0  & \Longleftrightarrow & \inf_{y \in F(\bar{x})} \Psi_e(y-\bar{y})\geq 0\\
                           & \Longleftrightarrow & \forall \; y \in F(\bar{x}): \;  \Psi_e(y-\bar{y})\geq 0\\
                           & \Longleftrightarrow & \forall \; y \in F(\bar{x}): y- \bar{y} \notin - \Int K\\
                           & \Longleftrightarrow & \bar{y} \in \WMin(F(\bar{x}),K).                   
\end{eqnarray*}

$(ii)$ The monotonicity of $g_{u,\bar{x}}$ is easily deduced from the monotonicity of $\Psi_e.$ Now, take $y\in F(\bar{x}).$ Then, we always have $g_{u,\bar{x}}(y)\leq 0.$ Analogous to $(i)$ we get

\begin{eqnarray*}
g_{u,\bar{x}}(y)= 0  & \Longleftrightarrow & \inf_{\bar{y} \in F(\bar{x})} \Psi_e(y-\bar{y})\geq 0\\
                           & \Longleftrightarrow & \forall \; \bar{y} \in F(\bar{x}): \;  \Psi_e(y-\bar{y})\geq 0\\
                           & \Longleftrightarrow & \forall \; \bar{y} \in F(\bar{x}): y- \bar{y} \notin - \Int K\\
                           & \Longleftrightarrow & y \in \WMax(F(\bar{x}),K),                   
\end{eqnarray*} as desired.

$(iii)$ The fact that $f_{r,\bar{x}}(\bar{x})\leq 0$ is trivial. If $r=l$ and $\tilde{y}\in  \WMin(F(\bar{x}),K)\neq \emptyset$ then, by statement $(i)$, we get $g_{l}(\bar{x},\tilde{y})=0.$ From this we deduce that $f_{l,\bar{x}}(\bar{x})\geq g_l(\bar{x},\tilde{y})= 0$, and hence the equality holds. Analogously, if $r=u$ and $\tilde{y}\in  \WMax(F(\bar{x}),K)\neq \emptyset$ then, by statement $(ii)$, we get $g_{u,\bar{x}}(\tilde{y})=0.$ Again, this implies that $f_{u,\bar{x}}(\bar{x})\geq 0,$ and hence the equality.

\end{proof}

\end{Proposition}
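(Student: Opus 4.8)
The plan is to reduce all three items to two elementary facts about the functional $\Psi_e$ recorded in Proposition \ref{prop: properties of tammer function}: its $K$-monotonicity together with $\Psi_e(0)=0$ (the latter coming from sublinearity), and its representability property, which I will use in the equivalent form $\Psi_e(w)\geq 0 \Leftrightarrow w\notin -\Int K$. No further machinery should be needed, so the argument is essentially a careful bookkeeping of infima and suprema.

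For $(i)$, I would first establish the $-K$-monotonicity of $g_l(x,\cdot)$ directly from the definition: fixing $x$ and taking $z_1,z_2$ with $z_1-z_2\in K$, the identity $(y-z_2)-(y-z_1)=z_1-z_2\in K$ shows $y-z_1\preceq_K y-z_2$ for every $y\in F(x)$, so $\Psi_e(y-z_1)\leq \Psi_e(y-z_2)$ by $K$-monotonicity, and passing to the infimum over $y$ yields $g_l(x,z_1)\leq g_l(x,z_2)$. For the characterization, the key observation is that feasibility of $y=\bar y$ in the infimum defining $g_l(\bar x,\bar y)$ gives $g_l(\bar x,\bar y)\leq \Psi_e(0)=0$; hence $g_l(\bar x,\bar y)=0$ if and only if $\Psi_e(y-\bar y)\geq 0$ for all $y\in F(\bar x)$. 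Applying representability to each such term, $\Psi_e(y-\bar y)\geq 0 \Leftrightarrow y-\bar y\notin -\Int K \Leftrightarrow y\notin \bar y-\Int K$, so the condition becomes $(\bar y-\Int K)\cap F(\bar x)=\emptyset$, i.e. $\bar y\in \WMin(F(\bar x),K)$. Item $(ii)$ follows by the symmetric computation (now giving $K$-monotonicity of $g_{u,\bar x}$), so that the emerging condition is $(y+\Int K)\cap F(\bar x)=\emptyset$, i.e. $y\in\WMax(F(\bar x),K)$.

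For $(iii)$, the inequality $f_{r,\bar x}(\bar x)\leq 0$ is immediate: by $(i)$ and $(ii)$ each inner quantity $g_l(\bar x,\bar y)$ (resp. $g_{u,\bar x}(y)$) is nonpositive, and a supremum of nonpositive numbers is nonpositive. For the equality statements I would exhibit a single term attaining $0$: when $r=l$ and $\WMin(F(\bar x),K)\neq\emptyset$, any $\tilde y$ in this set lies in $F(\bar x)$ and satisfies $g_l(\bar x,\tilde y)=0$ by $(i)$, whence $f_{l,\bar x}(\bar x)\geq 0$; combined with the reverse inequality this gives equality, and the case $r=u$ is handled identically using $(ii)$ and $\WMax$.

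I do not expect a genuine obstacle here, as the result is essentially a translation between the sign of $\Psi_e$ and the set relations. The one point requiring care is the strict/weak distinction: because representability is invoked in its strict form $-\Int K=\{w\mid \Psi_e(w)<0\}$, the characterizations produce \emph{weakly} minimal and maximal elements rather than minimal ones; and one must first secure $g_l(\bar x,\bar y)\leq 0$ (via the admissible choice $y=\bar y$) before converting ``infimum equals zero'' into ``every term is nonnegative''.
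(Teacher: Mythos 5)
Your proposal is correct and follows essentially the same route as the paper's proof: monotonicity of $g_l(x,\cdot)$ and $g_{u,\bar{x}}$ inherited from the $K$-monotonicity of $\Psi_e$, the characterizations via representability of $\Psi_e$ after first securing $g\leq 0$ from the admissible choice $y=\bar{y}$, and part $(iii)$ by exhibiting a weakly minimal (resp.\ maximal) element at which the inner functional attains $0$. Your treatment is in fact slightly more explicit than the paper's at the two points you flag ($\Psi_e(0)=0$ and the passage from ``infimum $=0$'' to ``every term $\geq 0$''), but this is elaboration, not a different argument.
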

\begin{Remark}\label{rem: replace F by epiF}
Proposition \ref{prop: g =0} $(i)$ together with  Proposition \ref{prop: properties of tammer function} $(iv)$ gives us monotonicity properties  of the functionals $g_{l}(x,\cdot)$ and $\Psi_e.$ From this, it easily follows that the functionals $g_{l}$ and $f_{l,\bar{x}}$ are invariant under replacement of $F$ by any set-valued mapping of the form $F_A:= F+ A,$ with $A\subseteq K$ and $0 \in A.$ In particular, this is true when we replace $F$ by $\mathcal{E}_F.$ 

Similarly, from Proposition \ref{prop: g =0} $(ii)$ and Proposition \ref{prop: properties of tammer function} $(iv)$ we deduce that the functionals $f_{u,\bar{x}}$ and $g_{u,\bar{x}}$ are invariant under replacement of $F$ by any set-valued mapping of the form $F_A:= F-A,$ with $A\subseteq K$ and $0 \in A.$

\end{Remark}

The next lemma proves some useful properties of the inner functions in the convex case
\begin{Lemma}\label{lem: convexity inner functions}

Let Assumption \ref{ass 1} be satisfied and consider the lower and upper inner functions given in Definition \ref{def- scalarization}. The following statements hold:

\begin{enumerate}

\item If $F$ is $\preceq^{(l)}_K$-convex, then $g_{l}(\cdot,\bar{y})$ is convex for every $\bar{y}\in F(\bar{x}).$ Furthermore, if $F$ is locally $l$-bounded at $\bar{x},$ then $\bar{x}\in \Int \operatorname{dom }g_{l}(\cdot,\bar{y})$ and $g_{l}(\cdot,\bar{y})$ is continuous at $\bar{x}.$

\item If $\mathcal{H}_F(\bar{x})$ is a convex and $K$- upper bounded set, then $g_{u,\bar{x}}$ is a convex $K$-monotone functional that is continuous on $Y.$

\end{enumerate}

\end{Lemma}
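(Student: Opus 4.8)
The plan is to treat the two statements separately, in each case reducing everything to the sublinearity, $K$-monotonicity and translativity of $\Psi_e$ recorded in Proposition \ref{prop: properties of tammer function}.

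For the convexity in $(i)$ I would fix $\bar{y}\in F(\bar{x})$, take $x_1,x_2\in\dom F$ (if one of them lies outside $\dom F$ the convexity inequality is trivial) and $\lambda\in(0,1)$, and set $x_\lambda:=\lambda x_1+(1-\lambda)x_2$. For arbitrary $y_1\in F(x_1)$ and $y_2\in F(x_2)$, the $\preceq^{(l)}_K$-convexity of $F$ gives $\lambda y_1+(1-\lambda)y_2\in\lambda F(x_1)+(1-\lambda)F(x_2)\subseteq F(x_\lambda)+K$, so there is $y\in F(x_\lambda)$ with $y\preceq_K\lambda y_1+(1-\lambda)y_2$. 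Applying $K$-monotonicity of $\Psi_e$ to pass from $y-\bar y$ to $\lambda y_1+(1-\lambda)y_2-\bar y$, and then sublinearity to split the convex combination, I obtain $g_l(x_\lambda,\bar{y})\leq\Psi_e(y-\bar{y})\leq\lambda\Psi_e(y_1-\bar{y})+(1-\lambda)\Psi_e(y_2-\bar{y})$. Taking the infimum over $y_1$ and $y_2$ yields the convexity inequality.

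For the continuity part of $(i)$ I would first shrink the neighborhood $U$ coming from local $l$-boundedness so that $U\subseteq\dom F$ (possible since $\bar{x}\in\Int\dom F$), hence $F(x)\neq\emptyset$ on $U$. The $l$-lower boundedness gives $-\mu e\preceq_K y$ for every $y\in F(x)$, $x\in U$, so by monotonicity $\Psi_e(y-\bar{y})\geq\Psi_e(-\mu e-\bar{y})$, a finite lower bound for $g_l(\cdot,\bar{y})$ on $U$. The $l$-upper boundedness forces $F(x)\cap(\mu e-K)$ to be nonempty, since it is $\sim^{(l)}_K$-equivalent to the nonempty set $F(x)$; picking $y_0$ there gives $y_0\preceq_K\mu e$ and thus $g_l(x,\bar{y})\leq\Psi_e(y_0-\bar{y})\leq\Psi_e(\mu e-\bar{y})$, a uniform finite upper bound on $U$. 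Consequently $U\subseteq\dom g_l(\cdot,\bar{y})$, so $\bar{x}\in\Int\dom g_l(\cdot,\bar{y})$, and by the standard fact that a convex function that is finite and bounded above on a neighborhood of a point is locally Lipschitz there, $g_l(\cdot,\bar{y})$ is continuous at $\bar{x}$.

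For $(ii)$, the $K$-monotonicity of $g_{u,\bar{x}}$ is already part of Proposition \ref{prop: g =0}$(ii)$. For convexity I would use Remark \ref{rem: replace F by epiF} to rewrite $g_{u,\bar{x}}(y)=\inf_{\bar{y}\in\mathcal{H}_F(\bar{x})}\Psi_e(y-\bar{y})$; since $\mathcal{H}_F(\bar{x})$ is convex by hypothesis, a verbatim copy of the computation in $(i)$ (now choosing $z_1,z_2\in\mathcal{H}_F(\bar{x})$, whose convex combination remains in $\mathcal{H}_F(\bar{x})$) shows convexity, equivalently $g_{u,\bar{x}}$ is the infimal convolution of the convex $\Psi_e$ with the indicator of the convex set $\mathcal{H}_F(\bar{x})$. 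For continuity on all of $Y$ I would establish finiteness together with local upper-boundedness: fixing any $\bar{y}_0\in F(\bar{x})$ gives $g_{u,\bar{x}}(y)\leq\Psi_e(y-\bar{y}_0)$, a finite Lipschitz upper bound; and the $K$-upper boundedness gives $\bar{y}\preceq_K\mu e$ for every $\bar{y}\in F(\bar{x})$, so by monotonicity and translativity $\Psi_e(y-\bar{y})\geq\Psi_e(y-\mu e)=\Psi_e(y)-\mu$, a finite lower bound. Hence $g_{u,\bar{x}}$ is finite-valued and bounded above on a neighborhood of every point, so being convex it is continuous on $Y$. The routine algebraic manipulations are straightforward; the step requiring genuine care is continuity, where one must extract a real lower bound (finiteness, not merely $>-\infty$ pointwise) together with local upper-boundedness, so that the convex-analysis continuity result applies — with the nonemptiness of $F(x)\cap(\mu e-K)$ and the translativity of $\Psi_e$ being precisely the places where the boundedness hypotheses are indispensable.
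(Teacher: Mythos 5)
Your proposal is correct and follows essentially the same route as the paper: the same properties of $\Psi_e$ (sublinearity, $K$-monotonicity, translativity) combined with the $\preceq^{(l)}_K$-convexity/boundedness hypotheses yield convexity and finiteness plus a local upper bound, after which continuity follows from the standard convex-analysis fact. The only cosmetic difference is in part $(ii)$, where you rewrite $g_{u,\bar{x}}$ as an infimum over the convex set $\mathcal{H}_F(\bar{x})$ (an infimal convolution) instead of the paper's epigraph-with-$\epsilon$ argument, but both hinge on exactly the same ingredient, namely the convexity of $\mathcal{H}_F(\bar{x})$.
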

\begin{proof}
$(i)$ Take $\bar{y} \in F(\bar{x}),\; x_1, x_2 \in X,$ and $\lambda \in (0,1).$ Let  $x_\lambda:= \lambda x_1+(1-\lambda) x_2$ and $F_\lambda:=\lambda F(x_1)+(1-\lambda) F(x_2).$ Since $F$ is $\preceq^{(l)}_K$-convex, we have 

\begin{equation}\label{eq: convex l}
F_\lambda\subseteq F(x_\lambda)+ K.
\end{equation} 
We now have 
\begin{eqnarray*}
&g_{l}(\lambda x_1+(1-\lambda) x_2, \bar{y})&\\
 & = & \inf_{y\in F(x_\lambda)} \Psi_e(y-\bar{y})\\
                                           &\overset{(\;\mathrm{by }\;  \mathrm{ monotonicity} \; \mathrm{of}\; \Psi_e\;)}{=} & \inf_{y\in F(x_\lambda)+K} \Psi_e(y-\bar{y})\\
                                           &\overset{(\;\mathrm{by }\;  \eqref{eq: convex l}\;)}{\leq} & \inf_{y\in F_\lambda} \Psi_e(y-\bar{y})\\
                                           & = & \inf_{(y_1,y_2)\in F(x_1)\times F(x_2)} \Psi_e(\lambda y_1+(1-\lambda) y_2-\bar{y})\\
                                           & = & \inf_{(y_1,y_2)\in F(x_1)\times F(x_2)} \Psi_e(\lambda (y_1-\bar{y})+(1-\lambda) (y_2-\bar{y}))\\
                                           & \overset{(\;\mathrm{by }\;  \mathrm{ convexity} \; \mathrm{of}\; \Psi_e\;)}{\leq} & \inf_{(y_1,y_2)\in F(x_1)\times F(x_2)} \lambda \Psi_e (y_1-\bar{y})+ (1-\lambda) \Psi_e (y_2-\bar{y})\\
                                           & = & \lambda g_{l}(x_1,\bar{y})+ (1-\lambda) g_{l}(x_2,\bar{y}).\\
\end{eqnarray*}
Now, let us assume that $F$ is locally $l$-bounded at $\bar{x}.$ Hence, we can find
$ \mu > 0$ and a neighborhood $U$ of $\bar{x}$ such that 

$$\forall\; x\in U: \; F(x)\cap [-\mu e, \; \mu e] + K = F(x) + K.$$
 By the monotonicity of $\Psi_e$, we have, for every $x \in U:$

\begin{eqnarray}\label{eq: g_u,x finite}
 -\infty                       & <    &  \Psi_e(-\mu e -\bar{y}) \nonumber \\
                                   & =    &  \inf_{y \in -\mu e+K} \Psi_e(y-\bar{y})\nonumber\\
                                   & \leq &  \inf_{y\in F(x) +K } \Psi_e(y-\bar{y})\nonumber\\
                                   & = &  g_{l}(x,\bar{y}) \\
                                   & =    &  \inf_{y\in F(x)\cap [\mu e -K]} \Psi_e(y-\bar{y}) \nonumber \\
                                   & \leq &  \sup_{y\in F(x)\cap [\mu e -K]} \Psi_e(y-\bar{y}) \nonumber\\
                                   & \leq &  \Psi_e(\mu e-\bar{y}) \nonumber\\
                                   & <    & +\infty  .          \nonumber                        
\end{eqnarray} This shows that $g_{l}(\cdot,\bar{y})$ is finite and bounded above around $\bar{x},$ from which the continuity is deduced.\\

$(ii)$ The monotonicity of $g_{u,\bar{x}}$ was already established in Proposition \ref{prop: g =0} $(ii)$. In order to show the convexity, we check that $\epi g_{u,\bar{x}}$ is convex. Indeed, take $(y_1,t_1), (y_2,t_2)\in \epi g_{u,\bar{x}}$ and $\lambda \in (0,1).$ Hence, $g_{u,\bar{x}}(x_1)\leq t_1 $ and $g_{u,\bar{x}}(x_2)\leq t_2.$ Then, for any $\epsilon>0,$ we have $$g_{u,\bar{x}}(x_1)< t_1+\epsilon,\;\; g_{u,\bar{x}}(x_2)< t_2+\epsilon. $$ But then, we can find $\bar{y}_1, \bar{y}_2\in F(\bar{x})$ such that $$\Psi_e(y_1- \bar{y}_1) <t_1+\epsilon,\;\; \Psi_e(y_2- \bar{y}_2) <t_2+\epsilon.$$ From this, we get 

\begin{eqnarray*}
\Psi_e((\lambda y_1 +(1-\lambda)y_2 )- (\lambda \bar{y}_1+(1-\lambda)\bar{y}_2)) & = & \Psi_e(\lambda(y_1-\bar{y}_1) + (1-\lambda) (y_2-\bar{y}_2))\\
                                                                                 & \leq & \lambda \Psi_e(y_1- \bar{y}_1) +(1-\lambda) \Psi_e(y_2-\bar{y}_2)\\
                                                                                 & \leq &  \lambda(t_1+\epsilon) +(1-\lambda)(t_2+\epsilon)\\
                                                                                 & = & \lambda t_1+(1-\lambda)t_2 +\epsilon. 
\end{eqnarray*} Now, because $F(\bar{x})-K$ is convex, we have 

$$\lambda \bar{y}_1+(1-\lambda)\bar{y}_2 \in \operatorname{conv}(F(\bar{x}))\subseteq \mathcal{H}_F(\bar{x}),$$ and hence we can find $\bar{y} \in F(\bar{x})$ such that $\lambda \bar{y}_1+(1-\lambda)\bar{y}_2 \in \bar{y}-K.$ Then, by monotonicity of $\Psi_e,$ we get

\begin{eqnarray*}
g_{u,\bar{x}}(\lambda y_1+(1-\lambda)y_2) & \leq & \Psi_e(\lambda y_1+(1-\lambda)y_2 - \bar{y})\\
                                          & \leq & \Psi_e((\lambda y_1+(1-\lambda)y_2)- (\lambda \bar{y}_1+(1-\lambda)\bar{y}_2))\\
                                          & \leq &  \lambda t_1+(1-\lambda)t_2 +\epsilon.
\end{eqnarray*} Since $\epsilon>0$ was chosen arbitrarily, we conclude that $(\lambda y_1+(1-\lambda)y_2, \lambda t_1+(1-\lambda)t_2)\in \epi g_{u,\bar{x}}.$ But this means that $\epi g_{u,\bar{x}}$ is a convex set, as desired. 

Now, since $\mathcal{H}_F(\bar{x})$ is $K$-upper bounded, we have  

$$-\infty < \Psi(y- \mu e) =\inf_{\bar{y}\in \mu e-K} \Psi_e(y-\bar{y}) \leq \inf_{\bar{y}\in \mathcal{H}_F(\bar{x})-K} \Psi_e(y-\bar{y}) \overset{\textrm{(Remark \ref{rem: replace F by epiF})}}{=} g_{u,\bar{x}}(y).$$ This means that $g_{u,\bar{x}}$ is finite on $Y.$ The continuity of $g_{u,\bar{x}}$ is now deduced by fixing $\bar{y}\in F(\bar{x})$ and noticing that $g_{u,\bar{x}}(\cdot) \leq \Psi_e(\cdot -\bar{y}),$ a continuous convex functional.
\end{proof}

We are now ready to establish the convexity of the scalarization functions $f_{l,\bar{x}}$ and $f_{u,\bar{x}}.$

\begin{Theorem}\label{thm: convexity of scalarizations }

Let Assumption \ref{ass 1} be satisfied and, for $r\in \{l,u\},$ consider the functional $f_{r,\bar{x}}$ given in Definition \ref{def- scalarization} $(iii)$. The following statements hold:

\begin{enumerate}
\item If $F$ is $\preceq^{(l)}_K$-convex then $f_{l,\bar{x}}$ is convex. Furthermore, if $F$ is locally $l$-bounded at $\bar{x},$ then $\bar{x} \in \Int \operatorname{dom } f_{l,\bar{x}} $ and $f_{l,\bar{x}}$ is continuous at $\bar{x}.$

\item If $F$ is $\preceq^{(u)}_K$-convex and $\mathcal{H}_F(\bar{x})$ is a convex set, then $f_{u,\bar{x}}$ is convex. Furthermore, if $F$ is locally $u$-upper bounded at $\bar{x},$ then $\bar{x} \in \Int \operatorname{dom } f_{u,\bar{x}} $ and $f_{u,\bar{x}}$ is continuous at $\bar{x}.$ 

\end{enumerate}
\end{Theorem}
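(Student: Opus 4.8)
The plan is to treat the two statements separately, reducing each to the properties of the inner functions $g_l(\cdot,\bar{y})$ and $g_{u,\bar{x}}$ established in Lemma \ref{lem: convexity inner functions}, together with the monotonicity recorded in Proposition \ref{prop: g =0} and the basic properties of $\Psi_e$ from Proposition \ref{prop: properties of tammer function}.

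For statement $(i)$ the convexity is almost free. Since $F$ is $\preceq^{(l)}_K$-convex, Lemma \ref{lem: convexity inner functions}$(i)$ gives that $g_l(\cdot,\bar{y})$ is convex for every $\bar{y}\in F(\bar{x})$, and as $f_{l,\bar{x}}=\sup_{\bar{y}\in F(\bar{x})}g_l(\cdot,\bar{y})$ is a pointwise supremum of convex functions, it is convex. For the local boundedness and continuity I would use the two-sided estimate from the proof of Lemma \ref{lem: convexity inner functions}$(i)$, namely
$$\Psi_e(-\mu e-\bar{y})\leq g_l(x,\bar{y})\leq \Psi_e(\mu e-\bar{y})\qquad\text{for all } x\in U,$$
valid on a neighborhood $U$ of $\bar{x}$ when $F$ is locally $l$-bounded. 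The key point is to make these bounds uniform in $\bar{y}$: since $l$-boundedness includes $l$-lower boundedness, $F(\bar{x})\subseteq -\mu e+K$, so by representability and translativity of $\Psi_e$ (Proposition \ref{prop: properties of tammer function}$(iii),(vi)$) we get $\Psi_e(-\bar{y})\leq\mu$, hence $\Psi_e(\mu e-\bar{y})=\mu+\Psi_e(-\bar{y})\leq 2\mu$ for every $\bar{y}\in F(\bar{x})$. Taking the supremum over $\bar{y}$ bounds $f_{l,\bar{x}}$ above by $2\mu$ on $U$, while the lower estimate with a fixed $\bar{y}_0$ keeps it finite. A convex function that is finite and bounded above on a neighborhood of a point is continuous there, which yields both $\bar{x}\in\Int\operatorname{dom}f_{l,\bar{x}}$ and continuity at $\bar{x}$.

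For statement $(ii)$ the convexity requires more care, because $f_{u,\bar{x}}(x)=\sup_{y\in F(x)}g_{u,\bar{x}}(y)$ is a supremum over the \emph{moving} index set $F(x)$, so it is not a plain supremum of convex functions of $x$. I would establish the defining inequality directly. Fix $x_1,x_2$, $\lambda\in(0,1)$, and set $x_\lambda:=\lambda x_1+(1-\lambda)x_2$ and $F_\lambda:=\lambda F(x_1)+(1-\lambda)F(x_2)$. The $\preceq^{(u)}_K$-convexity of $F$ gives $F(x_\lambda)\subseteq F_\lambda-K$. Combining this inclusion with the $K$-monotonicity of $g_{u,\bar{x}}$ (Proposition \ref{prop: g =0}$(ii)$) lets me replace the supremum over $F(x_\lambda)$ by one over $F_\lambda$: for $y\in F(x_\lambda)$ there is $w\in F_\lambda$ with $y\preceq_K w$, whence $g_{u,\bar{x}}(y)\leq g_{u,\bar{x}}(w)$. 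Finally, using that $g_{u,\bar{x}}$ is convex — which follows from the convexity of $\mathcal{H}_F(\bar{x})$ alone, as in the convexity part of the proof of Lemma \ref{lem: convexity inner functions}$(ii)$, the $K$-upper boundedness there being needed only for finiteness — I bound $g_{u,\bar{x}}(\lambda y_1+(1-\lambda)y_2)\leq\lambda g_{u,\bar{x}}(y_1)+(1-\lambda)g_{u,\bar{x}}(y_2)$ and separate the supremum over $F_\lambda$ into $\lambda\sup_{F(x_1)}+(1-\lambda)\sup_{F(x_2)}$, arriving at $f_{u,\bar{x}}(x_\lambda)\leq\lambda f_{u,\bar{x}}(x_1)+(1-\lambda)f_{u,\bar{x}}(x_2)$.

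For the continuity in $(ii)$, local $u$-upper boundedness yields a neighborhood $U$ with $F[U]\subseteq\mu e-K$; in particular $\mathcal{H}_F(\bar{x})\subseteq\mu e-K$ is $K$-upper bounded, so together with its convexity Lemma \ref{lem: convexity inner functions}$(ii)$ applies in full and $g_{u,\bar{x}}$ is finite and continuous on all of $Y$. Every $y\in F(x)$ with $x\in U$ satisfies $y\preceq_K\mu e$, so monotonicity gives $g_{u,\bar{x}}(y)\leq g_{u,\bar{x}}(\mu e)$ and hence $f_{u,\bar{x}}\leq g_{u,\bar{x}}(\mu e)<+\infty$ on $U$, while $f_{u,\bar{x}}\geq g_{u,\bar{x}}(y_0)>-\infty$ keeps it finite; continuity at $\bar{x}$ and $\bar{x}\in\Int\operatorname{dom}f_{u,\bar{x}}$ follow as before. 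I expect the main obstacle to be precisely the convexity step in $(ii)$: unlike $(i)$, the dependence of the index set $F(x)$ on $x$ rules out a direct supremum-of-convex-functions argument, so the interplay of $\preceq^{(u)}_K$-convexity, $K$-monotonicity, and convexity of $g_{u,\bar{x}}$ has to be orchestrated in the correct order.
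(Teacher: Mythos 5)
Your proposal is correct and follows essentially the same route as the paper: convexity of $f_{l,\bar{x}}$ as a pointwise supremum of the convex functions $g_l(\cdot,\bar{y})$, direct verification of the convexity inequality for $f_{u,\bar{x}}$ by combining $\preceq^{(u)}_K$-convexity of $F$, $K$-monotonicity of $g_{u,\bar{x}}$, and convexity of $g_{u,\bar{x}}$, and the same finiteness/upper-boundedness estimates (uniform bound $2\mu$ in case $(i)$, a selection of $F$ and the bound $g_{u,\bar{x}}(\mu e)$ in case $(ii)$) to get continuity. Your explicit observation that the convexity of $g_{u,\bar{x}}$ needs only the convexity of $\mathcal{H}_F(\bar{x})$ --- the $K$-upper boundedness in Lemma \ref{lem: convexity inner functions} $(ii)$ being used only for finiteness and continuity --- is a useful clarification, since the paper invokes that convexity under exactly the theorem's weaker hypotheses without comment.
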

\begin{proof}

$(i)$ We have $$f_{l,\bar{x}}(x)= \sup_{\bar{y}\in F(\bar{x})} g_{l}(x,\bar{y}).$$ By Lemma \ref{lem: convexity inner functions} (i), for every $\bar{y}\in F(\bar{x}),$ the functional $g_{l}(\cdot,\bar{y})$ is convex. Hence $f_{l,\bar{x}}$ is convex as it is the supremum of convex functionals. To prove the second part, it suffices to show that $f_{l,\bar{x}}$ is finite and upper bounded on a neighborhood of $\bar{x}.$ In order to show that this is true, note that the assumptions on the second part of Lemma \ref{lem: convexity inner functions} $(i)$ are fulfilled. Hence, from \eqref{eq: g_u,x finite} we get the existence of $\mu >0 $ and  neighborhood $U$ of $\bar{x}$ on which 

\begin{equation}\label{eq: g_l,x finite 2}
\forall \; x\in U: -\infty <  g_{l}(x,\bar{y}) \leq  \Psi_e(\mu e-\bar{y}).
\end{equation} Taking the supremum over $\bar{y} \in F(\bar{x})$ in \eqref{eq: g_l,x finite 2}, we get

\begin{equation}\label{eq: f_l,x finite?}
\forall \; x\in U: -\infty < f_{l,\bar{x}}(x) \leq \sup_{\bar{y}\in F(\bar{x})} \Psi_e(\mu e-\bar{y}).
\end{equation} Now, since $F$ is locally $l$-lower bounded at $\bar{x},$ in particular $F(\bar{x})\subseteq -\mu e +K.$ By the monotonicity of $\Psi_e,$ we now obtain $$ \sup_{\bar{y}\in F(\bar{x})} \Psi_e(\mu e-\bar{y})\leq \Psi_e(2\mu e)=2\mu.$$ This, together with \eqref{eq: f_l,x finite?}, implies that $f_{l,\bar{x}}$ is finite and upper bounded on $U.$ The statement follows.\\

$(ii)$ Let us now prove that $f_{u,\bar{x}}$ is convex. Indeed, take any $x_1,x_2\in X$ and $\lambda \in (0,1),$ Again, by denoting $x_\lambda = \lambda x_1+(1-\lambda) x_2$ and $F_\lambda= \lambda F(x_1)+ (1-\lambda) F(x_2),$ we have
\begin{eqnarray*}
f_{u,\bar{x}}(x_\lambda) & = & \sup_{y\in F(x_\lambda)} g_{u,\bar{x}}(y)\\
                         &\overset{(\;\mathrm{by }\;  \mathrm{convexity } \; \mathrm{of}\;  F\;)}{\leq}&  \sup_{y \in F_\lambda} g_{u,\bar{x}}(y)\\
                         & = &  \sup_{(y_1,y_2) \in F(x_1)\times F(x_2)} g_{u,\bar{x}}(\lambda y_1+(1-\lambda)y_2 )\\
                         & \overset{(\;\mathrm{by }\;  \mathrm{ convexity} \; \mathrm{of}\; g_{u,\bar{x}}\;)}{\leq} & \sup_{(y_1,y_2) \in F(x_1)\times F(x_2)}   \lambda g_{u,\bar{x}}(y_1) +(1-\lambda ) g_{u,\bar{x}}(y_2)\\
                         & = & \lambda f_{u,\bar{x}}(x_1) +(1-\lambda) f_{u,\bar{x}}(x_2),
\end{eqnarray*} as desired.

Now, assume that $F$ is locally $u$-upper bounded at $\bar{x}$ and let $U$ be the neighborhood on which the boundedness property holds. Again, in order to prove the second part it suffices to show that $f_{u,\bar{x}}$ is finite and upper bounded on a neighborhood of $\bar{x}.$ We proceed as follows: since $\bar{x}\in \Int \operatorname{dom} F,$ we can assume without loss of generality that $U \subseteq \Int\operatorname{dom } F.$ Moreover, since in particular the assumptions of Lemma \ref{lem: convexity inner functions} $(ii)$ are fulfilled, we get that $g_{u,\bar{x}}(y)> -\infty$ for every $y\in Y.$ Taking any selection $\theta$ of $F$ on $U,$ we deduce that 

$$\forall \; x\in U: \; -\infty < g_{u,\bar{x}}(\theta(x))\leq f_{u,\bar{x}}(x).$$ On the other hand, recall that from Lemma \ref{lem: convexity inner functions} $(ii)$ the functional $g_{u,\bar{x}}$ is $K$-monotone and finite. Taking this into account and the fact that $F(x)-K \subseteq \mu e -K $ for every $x\in U,$ we obtain 

$$\forall \; x\in U:\;  f_{u,\bar{x}}(x)\leq \sup_{y \in \mu e-K} g_{u,\bar{x}}(y) = g_{u,\bar{x}}(\mu e) < + \infty.$$ The theorem is proved.
\end{proof}

Next, we prove that the Lipschitzianity of the set-valued mapping is also transfered to the corresponding scalarization functionals. The following proposition is crucial. 

\begin{Proposition}\label{prop: lip marginal}
Let Assumption \ref{ass 1} be fulfilled and let $f:X\times Y \to \overline{\R}$ be a given functional. Consider the associated marginal functions $\varphi, \Phi:X\to \overline{\R}$ defined as

\begin{equation*}
\varphi(x):= \inf_{y\in F(x)} f(x,y), \;\;\; \Phi(x):= \sup_{y\in F(x)} f(x,y).
\end{equation*} Suppose that $F$ is Lipschitzian on a set $U\subseteq X$ with constant $\ell>0$ and that  $f$ is Lipschitzian on the set $(U \times Y) \cap \gph F$ with constant $\ell'>0.$ The following statements are true:

\begin{enumerate}

\item If $\varphi(\bar{x}) > -\infty$ for some $\bar{x} \in U,$ then $\varphi$ is Lipschitzian on $U$ with constant $\ell'(1+\ell).$ 

\item If $\Phi(\bar{x}) < +\infty$ for some $\bar{x} \in U,$ then $\Phi$ is Lipschitzian on $U$ with constant $\ell'(1+\ell).$ 

\end{enumerate}  

\begin{proof} We only prove $(i),$ since the proof of $(ii)$ is very similar. Take $x, x' \in U$ and let $\ell, \ell'>0$ be the Lipschitzian constants of $F$ and $f$ respectively. Then, because $F$ is Lipschitzian on $U,$ 

\begin{equation*}
\forall\; y' \in F(x'), \exists \; y\in F(x): \|y-y'\|\leq \ell\|x-x'\|.
\end{equation*}
Taking this into account, together with the Lipschitz continuity of $f$ on $(U\times Y)\cap \gph F,$ we have 

\begin{eqnarray*}
\forall\; y' \in F(x'), \exists \; y\in F(x): f(x,y) & \leq & f(x',y') + \ell'(\|x-x'\| + \|y-y'\|)\\
       & \leq & f(x',y') + \ell'(1+\ell) \|x-x'\|.
\end{eqnarray*} This implies 

\begin{equation}\label{eq: lipsch marginal funct}
\varphi(x) \leq \varphi(x') + \tilde{\ell} \|x-x''\|,
\end{equation} with $\tilde{\ell}:= \ell'(1+\ell).$ Since $\varphi(\bar{x})> -\infty,$ we can substitute $x = \bar{x}$ in \eqref{eq: lipsch marginal funct} to obtain that $\varphi(x') > -\infty $ for every $x' \in U.$ From this, it follows that  $\varphi$ is Lipschitzian on $U.$

\end{proof}

\end{Proposition}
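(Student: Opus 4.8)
The plan is to establish a single one-sided estimate, namely $\varphi(x) \le \varphi(x') + \ell'(1+\ell)\,\|x-x'\|$ for all $x,x' \in U$, and then to leverage both its symmetry in the pair $(x,x')$ and the hypothesis $\varphi(\bar x) > -\infty$ to upgrade it to the full two-sided Lipschitz property together with finiteness. I would prove only $(i)$ in detail, since $(ii)$ is entirely parallel with the roles of infimum and supremum interchanged, and in fact follows from $(i)$ applied to $-f$.

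First I would fix $x,x' \in U$ and apply the Lipschitzianity \eqref{eq3} of $F$ \emph{with the two points interchanged}: from $F(x') \subseteq F(x) + \ell\|x-x'\|\,\mathbb{B}$ one obtains, for every $y' \in F(x')$, a matching $y \in F(x)$ with $\|y-y'\| \le \ell\|x-x'\|$. Since both $(x,y)$ and $(x',y')$ lie in $(U\times Y)\cap \gph F$, the Lipschitz rank $\ell'$ of $f$ on that set yields $f(x,y) \le f(x',y') + \ell'(\|x-x'\| + \|y-y'\|)$, and inserting the distance bound collapses the right-hand side to $f(x',y') + \ell'(1+\ell)\|x-x'\|$. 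Bounding the left side below by $\varphi(x)$ and then taking the infimum over $y' \in F(x')$ on the right produces the desired one-sided estimate.

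Finally, I would read off the conclusion from this inequality alone. Substituting $x=\bar x$ gives $\varphi(x') \ge \varphi(\bar x) - \ell'(1+\ell)\|\bar x - x'\| > -\infty$ for every $x' \in U$; together with the upper bound $\varphi(x') \le f(x',y_0) < +\infty$ for any $y_0 \in F(x')$ (finiteness of $f$ on $\gph F \cap (U\times Y)$), this shows $\varphi$ is finite-valued on $U$. Interchanging $x$ and $x'$ in the one-sided estimate and combining the two resulting inequalities then yields $|\varphi(x)-\varphi(x')| \le \ell'(1+\ell)\|x-x'\|$, which is the claim.

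I expect the only genuinely delicate step to be the correct orientation of the set inclusion in \eqref{eq3}: one must invoke the Lipschitz condition of $F$ so that the approximating point sits on the \emph{infimizing} side, ensuring that the pointwise comparison of $f$-values survives the passage to the marginal function. Once this matching is arranged correctly, everything else is routine, and the possible value $-\infty$ is neutralized precisely by the assumption imposed at $\bar x$; for $(ii)$ the symmetric subtlety is that the supremizing point must instead be produced on the correct side.
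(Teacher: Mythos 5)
Your proof is correct and follows essentially the same route as the paper: the same matching of points via $F(x') \subseteq F(x) + \ell\|x-x'\|\,\mathbb{B}$, the same one-sided estimate $\varphi(x) \le \varphi(x') + \ell'(1+\ell)\|x-x'\|$, and the same use of $\varphi(\bar{x}) > -\infty$ to propagate finiteness over $U$. Your explicit finiteness check (the upper bound $\varphi(x') \le f(x',y_0) < +\infty$) and the symmetrization of the one-sided inequality merely spell out what the paper compresses into its final sentence, and your remark that $(ii)$ follows from $(i)$ applied to $-f$ matches the paper's ``very similar'' claim.
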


Next lemma is an immediate consequence of Proposition \ref{prop: lip marginal}, Proposition \ref{prop: properties of tammer function} $(ii)$ and Proposition \ref{prop: g =0}. 

\begin{Lemma}\label{lem: Lipschitz of inner functions}
Let Assumption \ref{ass 1} be fulfilled. Consider the lower and upper inner functions given in Definition \ref{def- scalarization} and let $\rho$ be the Lipschitz constant of $\Psi_e$.  The following statements hold: 
\begin{enumerate}

\item If $F$ is Lipschitzian with constant $\ell>0$ on a neighborhood $U$ of $\bar{x}$ and there exists $\bar{y} \in Y$ with $g_{l}(\bar{x},\bar{y})> - \infty,$ then $g_{l}$ is Lipschitzian on $U\times Y$ with constant $\rho(1+\ell).$ In particular, the condition $g_{l}(\bar{x},\bar{y})> - \infty$ can be replaced by $\bar{y} \in \WMin(F(\bar{x}),K).$

\item The functional $g_{u,\bar{x}}$ is Lipschitzian on $Y$ with constant $\rho$ if and only if $g_{u,\bar{x}}(\bar{y})>-\infty$ for some $\bar{y} \in Y.$ \\ In particular, this is true if $\WMax(F(\bar{x}),K)\neq \emptyset.$  
\end{enumerate}

\end{Lemma}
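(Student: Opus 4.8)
The plan is to recognize both inner functions as marginal (infimum) functions of the type treated in Proposition \ref{prop: lip marginal}, and then to feed in the global Lipschitzianity of $\Psi_e$ from Proposition \ref{prop: properties of tammer function} $(ii)$ while translating the weak efficiency hypotheses into finiteness by means of Proposition \ref{prop: g =0}. The whole statement is essentially bookkeeping; the only genuine points requiring care are the reformulation of the two-variable function $g_l$ as a single-parameter marginal function in part $(i)$, and the reason why the rank in part $(ii)$ is exactly $\rho$ rather than $\rho(1+\ell)$.

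For $(i)$, I would treat the pair $(x,z)$ as a single parameter in the Banach space $X\times Y$ endowed with the sum norm, and apply Proposition \ref{prop: lip marginal} with $X\times Y$ in the role of $X$. Concretely, set $\tilde F\colon X\times Y\rightrightarrows Y$, $\tilde F(x,z):=F(x)$, and $h\colon(X\times Y)\times Y\to\R$, $h((x,z),y):=\Psi_e(y-z)$, so that $g_l(x,z)=\inf_{y\in\tilde F(x,z)}h((x,z),y)$. Two routine estimates must be checked: that $\tilde F$ inherits the Lipschitz rank $\ell$ of $F$ on $U\times Y$, which is immediate from $\|x-x'\|\le\|(x,z)-(x',z')\|$; and that $h$ has rank $\rho$, which follows from Proposition \ref{prop: properties of tammer function} $(ii)$ and $\|(y-z)-(y'-z')\|\le\|y-y'\|+\|z-z'\|$. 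Since $\bar x\in U$, the hypothesis $g_l(\bar x,\bar y)>-\infty$ says exactly that this marginal function is finite at the point $(\bar x,\bar y)\in U\times Y$, so Proposition \ref{prop: lip marginal} $(i)$ yields Lipschitzianity of $g_l$ on $U\times Y$ with rank $\rho(1+\ell)$. For the ``in particular'' clause, Proposition \ref{prop: g =0} $(i)$ shows that any $\bar y\in\WMin(F(\bar x),K)$ satisfies $g_l(\bar x,\bar y)=0>-\infty$, so the finiteness hypothesis is met. I note that the proof of Proposition \ref{prop: lip marginal} uses only Lipschitz data, so the cone and constraint-set structure of Assumption \ref{ass 1} poses no obstruction to this product-space application.

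For $(ii)$, the necessity is immediate, since Lipschitzianity on $Y$ includes finiteness on $Y$ and hence $g_{u,\bar x}(\bar y)>-\infty$ for every $\bar y$. For the sufficiency, the key observation is that $F(\bar x)$ does not depend on the argument $y$, so $g_{u,\bar x}$ is the pointwise infimum over $\bar y\in F(\bar x)$ of the maps $y\mapsto\Psi_e(y-\bar y)$, each of rank $\rho$ because $y\mapsto y-\bar y$ is an isometry and $\Psi_e$ has rank $\rho$. I would then invoke the standard fact that a pointwise infimum of functions of common rank $\rho$ is $\rho$-Lipschitz once it is finite at a single point, the Lipschitz bound propagating finiteness to all of $Y$. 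This is Proposition \ref{prop: lip marginal} $(i)$ applied to the constant multifunction $y\mapsto F(\bar x)$; here lies the only delicate bookkeeping, since that multifunction has Lipschitz rank $0$, so one must either argue directly as above or let the admissible rank $\ell\downarrow 0$ in $\rho(1+\ell)$ to obtain the sharp constant $\rho$. Finally, Proposition \ref{prop: g =0} $(ii)$ guarantees that any $y\in\WMax(F(\bar x),K)$ satisfies $g_{u,\bar x}(y)=0>-\infty$, so $\WMax(F(\bar x),K)\neq\emptyset$ provides the required point of finiteness.
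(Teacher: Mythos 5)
Your proposal is correct and follows essentially the same route as the paper: part $(i)$ is the paper's exact construction (the constant-in-$z$ multifunction $\tilde F(x,z)=F(x)$ and $h((x,z),y)=\Psi_e(y-z)$ fed into Proposition \ref{prop: lip marginal} $(i)$ over the product space $X\times Y$), and part $(ii)$ is the paper's observation that $g_{u,\bar x}$ is a pointwise infimum of a fixed family of $\rho$-Lipschitz functionals, finite at one point, with Proposition \ref{prop: g =0} supplying finiteness from the weak efficiency hypotheses in both parts. Your extra verifications (the rank bookkeeping in $(i)$, and the remark that the sharp constant $\rho$ in $(ii)$ needs either the direct infimum argument or a limit $\ell\downarrow 0$ in $\rho(1+\ell)$) only make explicit what the paper leaves implicit.
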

\begin{proof}
$(i)$ Consider the set-valued mapping $\tilde{F}: X\times Y \rightrightarrows Y$ and the functional $\tilde{f}: X\times Y\times Y \to \R$ defined as

$$\tilde{F}(x,y):= F(x),\;\; \tilde{f}(x,y,z):= \Psi_e(z-y).$$ Apply now Proposition \ref{prop: lip marginal} $(i)$ with $\varphi := g_l, \; F := \tilde{F}$ and $f:= \tilde{f}$ to obtain the Lipschitzianity of $g_l.$ If $\bar{y} \in \WMin(F(\bar{x}),K),$ then it follows from Proposition \ref{prop: g =0} $(i)$ that $g_l(\bar{x},\bar{y}) = 0 > -\infty.$

$(ii)$ Follows easily from the fact that $g_{u,\bar{x}}$ is the finite infimum of a fixed family of Lipschitzian functionals on $Y.$ \\
Of course, when $\bar{y} \in \WMax(F(\bar{x}),K)\neq \emptyset,$ we get $g_{u,\bar{x}}(\bar{y}) = 0 > -\infty$ from Proposition \ref{prop: g =0} $(ii).$
\end{proof}

We can now establish the Lipschitzianity of the scalarizing functionals $f_{l,\bar{x}}$ and $f_{u,\bar{x}}.$

\begin{Theorem}\label{thm: Lipschitz of scalarizations}
Let Assumption \ref{ass 1} be fulfilled. For $r\in \{l,u\},$ consider the functional $f_{r,\bar{x}}$ given by \eqref{eq f_r} and suppose that $F$ is locally Lipschitzian at $\bar{x}.$ The following statements hold: 
\begin{enumerate}
\item If $\WMin(F(\bar{x}),K)\neq \emptyset,$  then $f_{l,\bar{x}}$ is locally Lipschitzian at $\bar{x}.$

\item If $\WMax(F(\bar{x}),K)\neq \emptyset,$  then $f_{u,\bar{x}}$ is locally Lipschitzian at $\bar{x}.$
\end{enumerate}
\end{Theorem}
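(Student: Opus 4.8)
The plan is to reduce both statements to the supremum version of the marginal-function estimate, Proposition \ref{prop: lip marginal} $(ii)$, after first securing the Lipschitzianity of the corresponding inner function via Lemma \ref{lem: Lipschitz of inner functions}. Throughout, let $\ell > 0$ denote a Lipschitz constant of $F$ on a neighborhood $U$ of $\bar{x}$ (which exists since $F$ is locally Lipschitzian at $\bar{x}$), and let $\rho$ be the Lipschitz constant of $\Psi_e$ from Proposition \ref{prop: properties of tammer function} $(ii)$.

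For $(i)$, I would observe that $f_{l,\bar{x}}(x) = \sup_{\bar{y} \in F(\bar{x})} g_l(x,\bar{y})$ is a supremum taken over the \emph{fixed} set $F(\bar{x})$. Since $\WMin(F(\bar{x}),K) \neq \emptyset$, Lemma \ref{lem: Lipschitz of inner functions} $(i)$ guarantees that $g_l$ is Lipschitzian on $U \times Y$ with constant $\rho(1+\ell)$. I would then apply Proposition \ref{prop: lip marginal} $(ii)$ to the constant set-valued mapping $\hat{F} \equiv F(\bar{x})$ (Lipschitzian with constant $0$) and the integrand $\hat{f}(x,\bar{y}) := g_l(x,\bar{y})$ (Lipschitzian with constant $\rho(1+\ell)$ on $(U\times Y)\cap\gph\hat{F}$): the associated supremum-marginal function is exactly $f_{l,\bar{x}}$. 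The finiteness hypothesis $f_{l,\bar{x}}(\bar{x}) < +\infty$ required by the proposition holds because $f_{l,\bar{x}}(\bar{x}) \leq 0$ by Proposition \ref{prop: g =0} $(iii)$. Hence $f_{l,\bar{x}}$ is Lipschitzian on $U$ with constant $\rho(1+\ell)$, and therefore locally Lipschitzian at $\bar{x}$.

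For $(ii)$, the functional $f_{u,\bar{x}}(x) = \sup_{y\in F(x)} g_{u,\bar{x}}(y)$ is a genuine supremum-marginal function whose integrand $g_{u,\bar{x}}$ does not depend on $x$. Since $\WMax(F(\bar{x}),K)\neq\emptyset$, Lemma \ref{lem: Lipschitz of inner functions} $(ii)$ yields that $g_{u,\bar{x}}$ is Lipschitzian on $Y$ with constant $\rho$; viewed as a function on $X\times Y$ it is then Lipschitzian with the same constant. Applying Proposition \ref{prop: lip marginal} $(ii)$ with $F$ itself and $f(x,y):=g_{u,\bar{x}}(y)$, and again using $f_{u,\bar{x}}(\bar{x})\leq 0 < +\infty$ from Proposition \ref{prop: g =0} $(iii)$ to meet the finiteness hypothesis, I conclude that $f_{u,\bar{x}}$ is Lipschitzian on $U$ with constant $\rho(1+\ell)$.

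The routine parts are the two bookkeeping applications of the marginal estimate; the only point deserving care — and the real reason the $\WMin$/$\WMax$ nonemptiness assumptions appear — is ruling out the degenerate value $-\infty$ for the inner functions. This is precisely what Lemma \ref{lem: Lipschitz of inner functions} supplies: a weakly minimal (respectively, weakly maximal) element forces $g_l(\bar{x},\bar{y}) = 0$ (respectively, $g_{u,\bar{x}}(\bar{y}) = 0$) by Proposition \ref{prop: g =0}, which activates the Lipschitz conclusions. I expect no serious obstacle beyond verifying that the finiteness condition at $\bar{x}$ and the inner Lipschitz constants transfer correctly through the supremum operation.
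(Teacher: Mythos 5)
Your proposal is correct and follows essentially the same route as the paper's own proof: both parts reduce to Proposition \ref{prop: lip marginal} $(ii)$, using the constant mapping $\tilde{F}\equiv F(\bar{x})$ with $f:=g_l$ for part $(i)$ and $F$ itself with $f(x,y):=g_{u,\bar{x}}(y)$ for part $(ii)$, with Lemma \ref{lem: Lipschitz of inner functions} supplying the inner Lipschitzianity and Proposition \ref{prop: g =0} $(iii)$ the finiteness at $\bar{x}$. Your explicit tracking of the Lipschitz constants ($0$ for the constant mapping, hence $\rho(1+\ell)$ throughout) is a harmless refinement the paper leaves implicit.
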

\begin{proof} 
$(i)$ Consider the constant set-valued mapping $\tilde{F}: X \rightrightarrows Y$ given by $\tilde{F}(x):= F(\bar{x}) $ for every $x \in X.$ By Lemma \ref{lem: Lipschitz of inner functions} $(i),$ we know that $g_l$ is Lipschitzian on $U\times Y,$ where $U$ is a neighborhood of $\bar{x}$ on which $F$ is Lipschitzian. Furthermore, according to Proposition \ref{prop: g =0} $(iii),$ we have $f_{l,\bar{x}}(\bar{x})=0 < + \infty.$ Hence, the Lipschitzianity of $f_{l,\bar{x}}$ around $\bar{x}$ follows from Proposition \ref{prop: lip marginal} $(ii)$ with $\varphi := f_{l,\bar{x}}, F := \tilde{F}$ and $f:= g_l.$ 

$(ii)$ Similarly, consider the functional  $\tilde{f}: X\times Y \rightarrow \R$ given by $\tilde{f}(x,y):= g_{u,\bar{x}}(y) $ for every $(x,y) \in X\times Y.$ From \ref{lem: Lipschitz of inner functions} $(ii),$ we get that $\tilde{f}$ is Lipschitzian on $X\times Y.$ In addition, Proposition \ref{prop: g =0} $(iii)$ tells us that $f_{u,\bar{x}}(\bar{x})=0 < + \infty.$ Hence, the Lipschitzianity of $f_{u,\bar{x}}$ around $\bar{x}$ follows from Proposition \ref{prop: lip marginal} $(ii)$ with $\varphi := f_{u,\bar{x}}, F := F$ and $f:= \tilde{f}.$

\end{proof}

\section{Subdifferential of the scalarizing functional associated to the lower less relation}\label{sec: subdifferential l}

In this part, we derive upper estimates for Mordukhovich's subdifferential of the scalarizing functional $f_{l,\bar{x}}$ studied in Section \ref{sec: scalarizing function}. Our upper estimates are given in terms of the coderivative of the set-valued objective map $F$ and are based in Theorem \ref{thm: convex inf functions subdif} and Theorem \ref{thm: basic subdif marginal functions theorem}. These motivates the definition  of the following solution maps.

\begin{Definition}\label{def: solution maps lower}

Let Assumption 1 be fulfilled. 

\begin{enumerate}
\item The lower inner solution map $S^{l,1}_F: X\times Y \rightrightarrows Y$ is defined as

$$S^{l,1}_F(x,y):= \{z\in F(x): \Psi_e(z-y)= g_l(x,y)\}.$$

\item The  lower outer solution map $S^{l,2}_F: X \rightrightarrows Y$ is defined as

$$S^{l,2}_F(x):= \{y\in F(\bar{x}): f_{l,\bar{x}}(x)= g_{l}(x,y)\}.$$ 

\end{enumerate}

\end{Definition}

\begin{Remark}
According to Remark \ref{rem: replace F by epiF}, the functionals $g_l$ and $f_{l,\bar{x}}$ are invariant under replacement of $F$ by $\mathcal{E}_F.$ However, although the set-valued mappings $S^{l,i}_F$ and  $S^{l,i}_{\mathcal{E}_F}$ are based on the same functionals ($i=1,2$), we always have $S^{l,i}_F(\cdot) \subseteq  S^{l,i}_{\mathcal{E}_F}(\cdot)$ and the inclusions can be strict.

\end{Remark}

 We divide the analysis in two cases, corresponding to whether $F$ is $\preceq^{(l)}_K$-convex or locally Lipschitzian at $\bar{x}.$ We start the study with the convex case. The next lemma shows an exact  formula for the subdifferential of the inner function  given in Definition \ref{def- scalarization} $(i).$ It is worth mentioning that a similar version of this result was recently obtained in \cite[Lemma 2]{ha2019}, but assuming the separability of $X.$ 

\begin{Lemma}\label{lem: subdif inner g lower}

Let Assumption \ref{ass 1} be fulfilled and, for $\bar{y} \in \WMin(\mathcal{E}_F(\bar{x}),K),$  consider the functional $g_{l,\bar{y}}:=g_{l}(\cdot,\bar{y})$. Assume in addition that $F$ is $\preceq^{(l)}_K$-convex and locally $l$-bounded at $\bar{x}.$  Then, 
 
\begin{equation}\label{eq: subdif inner g lower}
\partial g_{l,\bar{y}}(\bar{x})= D^* \mathcal{E}_F(\bar{x},\bar{y})\left[\partial \Psi_e(0)\right].
\end{equation}


\end{Lemma}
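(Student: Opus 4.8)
The plan is to recognize $g_{l,\bar{y}}$ as a marginal function and apply the convex calculus rule of Theorem \ref{thm: convex inf functions subdif}. By Remark \ref{rem: replace F by epiF}, the functional $g_l$ is unchanged if we replace $F$ by the epigraphical multifunction $\mathcal{E}_F$, so that
\[
g_{l,\bar{y}}(x)=\inf_{z\in \mathcal{E}_F(x)}\Psi_e(z-\bar{y}).
\]
Setting $f(x,z):=\Psi_e(z-\bar{y})$, this is precisely the marginal function associated with the multifunction $\mathcal{E}_F$ and the cost $f$, with solution map $S(x)=\{z\in\mathcal{E}_F(x)\mid \Psi_e(z-\bar{y})=g_{l,\bar{y}}(x)\}$. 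The reduction to $\mathcal{E}_F$ is essential, since the right-hand side of the claim involves the coderivative of $\mathcal{E}_F$, not of $F$.

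Next I would verify the hypotheses of Theorem \ref{thm: convex inf functions subdif}. Since $F$ is $\preceq^{(l)}_K$-convex, the epigraphical multifunction $\mathcal{E}_F$ is convex, i.e. $\gph \mathcal{E}_F=\epi F$ is a convex set. The cost $f$ is convex and finite-valued, hence proper, because $\Psi_e$ is finite-valued and sublinear by Proposition \ref{prop: properties of tammer function} $(i)$; moreover $f$ is continuous on all of $X\times Y$, since $\Psi_e$ is Lipschitzian by Proposition \ref{prop: properties of tammer function} $(ii)$, so the regularity condition $(ii)$ of Theorem \ref{thm: convex inf functions subdif} holds at every point of $\gph \mathcal{E}_F$. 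It then remains to exhibit a minimizer. Because $\bar{y}\in\WMin(\mathcal{E}_F(\bar{x}),K)$, the argument of Proposition \ref{prop: g =0} $(i)$ applied to $\mathcal{E}_F$ gives $g_{l,\bar{y}}(\bar{x})=0$; since $\Psi_e(0)=0$ and $\bar{y}\in\mathcal{E}_F(\bar{x})$, we obtain $\bar{y}\in S(\bar{x})$ and $\varphi(\bar{x})=0\neq-\infty$. The local $l$-boundedness of $F$ ensures, via Lemma \ref{lem: convexity inner functions} $(i)$, that $g_{l,\bar{y}}$ is even finite and continuous around $\bar{x}$, so that $\bar{x}\in\Int\dom g_{l,\bar{y}}$ and the subdifferential formula applies cleanly.

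With the minimizer $\bar{z}=\bar{y}$, Theorem \ref{thm: convex inf functions subdif} yields
\[
\partial g_{l,\bar{y}}(\bar{x})=\bigcup_{(x^*,z^*)\in\partial f(\bar{x},\bar{y})}\big[x^*+D^*\mathcal{E}_F(\bar{x},\bar{y})(z^*)\big].
\]
The final step is to compute $\partial f(\bar{x},\bar{y})$. Since $f$ does not depend on $x$, its separable structure forces $x^*=0$, and the affine translation $z\mapsto z-\bar{y}$ identifies the remaining component as $\partial\Psi_e(0)$, so that $\partial f(\bar{x},\bar{y})=\{0\}\times\partial\Psi_e(0)$. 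Substituting this into the formula collapses the union to
\[
\partial g_{l,\bar{y}}(\bar{x})=\bigcup_{z^*\in\partial\Psi_e(0)}D^*\mathcal{E}_F(\bar{x},\bar{y})(z^*)=D^*\mathcal{E}_F(\bar{x},\bar{y})\big[\partial\Psi_e(0)\big],
\]
which is exactly \eqref{eq: subdif inner g lower}.

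The main subtlety — and the reason the weak-minimality hypothesis $\bar{y}\in\WMin(\mathcal{E}_F(\bar{x}),K)$ is indispensable — is the selection of the minimizer. The coderivative $D^*\mathcal{E}_F(\bar{x},\bar{y})$ is only meaningful when $(\bar{x},\bar{y})\in\gph\mathcal{E}_F$, and the calculus rule must be evaluated at an element of $S(\bar{x})$; weak minimality is precisely what makes $\bar{y}$ itself both feasible and optimal, so that $\Psi_e(0)=g_{l,\bar{y}}(\bar{x})$. The only other point requiring care is the identification $\partial f(\bar{x},\bar{y})=\{0\}\times\partial\Psi_e(0)$, which is routine convex analysis for a cost that is independent of $x$ and an affine shift of $\Psi_e$ in $z$.
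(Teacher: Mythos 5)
Your proposal is correct and follows essentially the same route as the paper's proof: both rewrite $g_{l,\bar{y}}$ as a marginal function over $\mathcal{E}_F$ via Remark \ref{rem: replace F by epiF}, verify the hypotheses of Theorem \ref{thm: convex inf functions subdif} (convexity of $\mathcal{E}_F$, Lipschitz continuity of $\Psi_e$ for the regularity condition, and $g_{l,\bar{y}}(\bar{x})=0$ with $\bar{y}$ itself as the minimizer), and then collapse the union using $\partial f(\bar{x},\bar{y})=\{0\}\times\partial\Psi_e(0)$. The only cosmetic difference is that you additionally invoke the local $l$-boundedness through Lemma \ref{lem: convexity inner functions} $(i)$ for continuity of $g_{l,\bar{y}}$ near $\bar{x}$, which is harmless but not needed once the regularity condition and $\bar{y}\in S(\bar{x})$ are in place.
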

\begin{proof} The result will be a simple consequence of Theorem \ref{thm: convex inf functions subdif}. Indeed, note that according to Remark \ref{rem: replace F by epiF} we can write

$$g_{l,\bar{y}}(x)= \inf_{y\in \mathcal{E}_F(x)} f(x,y),$$ where $f: X\times Y \to \R$ is defined as $f(x,y) = \Psi_e(y-\bar{y}).$  Since $F$ is $\preceq^{(l)}_K$- convex, we have that $\mathcal{E}_F$ is a convex set-valued mapping. It is also obvious that $f$ is proper and convex. Moreover, by Proposition \ref{prop: g =0} $(i)$, we have that $g_{l,\bar{y}}(\bar{x})=0\neq -\infty.$ According to Proposition \ref{prop: properties of tammer function} $(ii),$ $f$ is Lipschitzian on $X\times Y$ and hence the regularity condition $(ii)$ in Theorem \ref{thm: convex inf functions subdif} is satisfied. In this case, the solution map is just $S^{l,1}_{\mathcal{E}_F}(\cdot,\bar{y}).$ According to Proposition  \ref{prop: g =0} $(i)$  and Proposition \ref{prop: properties of tammer function} $(vi),$ we get  

\begin{equation}\label{eq: S^1l(xbar)}
S^{l,1}_{\mathcal{E}_F}(\bar{x},\bar{y})= \{y \in \mathcal{E}_F(\bar{x}) \mid \; \Psi_e(y-\bar{y}) = 0\}= \mathcal{E}_F(\bar{x}) \cap (\bar{y}- \bd K) .
\end{equation} Since $0 \in  \bd K,$ it follows that $\bar{y} \in S^{l,1}_{\mathcal{E}_F}(\bar{x},\bar{y}).$ Applying now Theorem \ref{thm: convex inf functions subdif}, we obtain 

\begin{eqnarray*}
\partial g_{l,\bar{y}}(\bar{x}) & =  &\bigcup_{(x^*,y^*) \in \partial f(\bar{x},\bar{y})}\left[ x^* +D^* \mathcal{E}_F (\bar{x},\bar{y})(y^*) \right]\\
                                & =  &\bigcup_{(x^*,y^*) \in \{0\} \times \partial \Psi_e(0)}\left[ x^* +D^* \mathcal{E}_F (\bar{x},\bar{y})(y^*) \right]\\
                                & =  &\bigcup_{y^* \in  \partial \Psi_e(0)}D^* \mathcal{E}_F (\bar{x},\bar{y})(y^*),
\end{eqnarray*} which proves the statement.

\end{proof}

\begin{Lemma}\label{lem: monotonicity of coderivative}

Let Assumption \ref{ass 1}  be fulfilled  and take points $(\bar{x},\bar{y}_1),(\bar{x},\bar{y}_2) \in \gph \mathcal{E}_F$ such that $\bar{y}_1\preceq_K \bar{y}_2.$ If  $F$ is $\preceq^{(l)}_K$- convex, then:  

 $$\forall \; y^* \in K^*: \; D^* \mathcal{E}_F(\bar{x},\bar{y}_2)(y^*) \subseteq D^* \mathcal{E}_F(\bar{x},\bar{y}_1)(y^*).$$
\end{Lemma}
\begin{proof}
Fix $y^* \in K^*$ and $x^*\in D^* \mathcal{E}_F(\bar{x},\bar{y}_2)(y^*).$ Since  $\bar{y}_1- \bar{y}_2 \in -K,$  we have that  $\langle y^*, \bar{y}_1- \bar{y}_2 \rangle \leq 0.$ Then, for every $(x,y) \in \gph \mathcal{E}_F,$ we have

\begin{eqnarray*}
\langle x^*, x- \bar{x} \rangle & \leq & \langle y^*, y- \bar{y}_2 \rangle\\
                                & = &  \langle y^*, y- \bar{y}_1 \rangle +\langle y^*, \bar{y}_1- \bar{y}_2 \rangle\\
                                & \leq & \langle y^*, y- \bar{y}_1 \rangle,
\end{eqnarray*} which implies that $(x^*, -y^*) \in N((\bar{x},\bar{y}_1),\gph \mathcal{E}_F).$ The statement is proved.
\end{proof}

The following concept was introduced in \cite{ha2001}.
\begin{Definition}

Let Assumption \ref{ass 1}  be fulfilled and consider $A \subseteq Y.$ We say that $A$ is strongly $K$- compact if there exists a compact set $B\subseteq A$ such that $B \in [A]^{(l)}.$
\end{Definition}

\begin{Theorem}\label{thm: subdiff lower l scalar}

Let Assumption \ref{ass 1} be satisfied. Suppose that $F$ is $\preceq^{(l)}_K$- convex and locally $l$-bounded at $\bar{x}.$ Furthermore, assume that $F(\bar{x})$ is strongly $K$-compact. Then, 

$$\partial f_{l,\bar{x}}(\bar{x})=\overline{\operatorname{conv}}^*\left(\bigcup_{\bar{y} \in \Min(F(\bar{x}),K)} D^* \mathcal{E}_F(\bar{x},\bar{y})\left[\partial \Psi_e(0)\right]\right).$$

\end{Theorem}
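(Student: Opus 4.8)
The plan is to express $f_{l,\bar{x}}$ as a marginal (supremum) function over the outer solution set and then combine the exact inner subdifferential formula from Lemma \ref{lem: subdif inner g lower} with a monotonicity reduction from Lemma \ref{lem: monotonicity of coderivative}. First I would recall that $f_{l,\bar{x}}(x) = \sup_{\bar{y}\in F(\bar{x})} g_{l,\bar{y}}(x)$ and that by Theorem \ref{thm: convexity of scalarizations } the convexity and local $l$-boundedness hypotheses make $f_{l,\bar{x}}$ a convex functional that is continuous at $\bar{x}$. Since $f_{l,\bar{x}}(\bar{x}) = 0$ by Proposition \ref{prop: g =0} $(iii)$ (note $\WMin(F(\bar{x}),K)\neq\emptyset$ follows from strong $K$-compactness), the subdifferential at $\bar{x}$ is the convex-analysis subdifferential of a supremum of convex functions. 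The standard Valadier/Ioffe--Tikhomirov formula for the subdifferential of a supremum then says $\partial f_{l,\bar{x}}(\bar{x})$ equals the weak$^*$-closed convex hull of $\bigcup \partial g_{l,\bar{y}}(\bar{x})$, where the union runs over the active indices $\bar{y}$, i.e. those with $g_{l,\bar{y}}(\bar{x}) = f_{l,\bar{x}}(\bar{x}) = 0$. The strong $K$-compactness of $F(\bar{x})$ is exactly what guarantees the compactness needed to legitimately pass to the weak$^*$-closed convex hull and to reduce the active index set to a compact one.

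The next step is to identify the active index set. By Proposition \ref{prop: g =0} $(i)$, $g_{l,\bar{y}}(\bar{x}) = 0$ holds if and only if $\bar{y} \in \WMin(F(\bar{x}),K)$. So the naive formula would give the weak$^*$-closed convex hull of $\bigcup_{\bar{y}\in\WMin(F(\bar{x}),K)} \partial g_{l,\bar{y}}(\bar{x})$, and by Lemma \ref{lem: subdif inner g lower} each such subdifferential equals $D^*\mathcal{E}_F(\bar{x},\bar{y})[\partial\Psi_e(0)]$. The claimed statement, however, has the union only over $\Min(F(\bar{x}),K)$, which is generally a proper subset of $\WMin(F(\bar{x}),K)$. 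The reconciliation comes from Lemma \ref{lem: monotonicity of coderivative}: since $\partial\Psi_e(0) \subseteq K^*$ by Proposition \ref{prop: properties of tammer function} $(v)$, the coderivative sets are monotone decreasing along the order, so for any weakly minimal $\bar{y}_2$ dominating some minimal $\bar{y}_1$ (i.e. $\bar{y}_1 \preceq_K \bar{y}_2$) we have $D^*\mathcal{E}_F(\bar{x},\bar{y}_2)[\partial\Psi_e(0)] \subseteq D^*\mathcal{E}_F(\bar{x},\bar{y}_1)[\partial\Psi_e(0)]$. Thus every contribution from a merely weakly minimal point is already absorbed by a genuinely minimal point below it, and the union over $\WMin$ collapses (inside the closed convex hull) to the union over $\Min$.

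The main obstacle is the final reduction step, where I must ensure that every weakly minimal $\bar{y}_2$ actually dominates some minimal element $\bar{y}_1 \in \Min(F(\bar{x}),K)$ so that Lemma \ref{lem: monotonicity of coderivative} can be applied. This domination property is precisely a \emph{domination / Min-external-stability} statement for the set $F(\bar{x})$, and it is here that strong $K$-compactness earns its place: the existence of a compact $B \in [F(\bar{x})]^{(l)}$ combined with the Bishop--Phelps/Zorn-type existence of minimal points on $K$-compact sets guarantees that for each $y \in F(\bar{x})$ there is $\bar{y}_1 \in \Min(F(\bar{x}),K)$ with $\bar{y}_1 \preceq_K y$. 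I would also need to handle the coderivatives carefully: Lemma \ref{lem: subdif inner g lower} is stated for $\bar{y}\in\WMin(\mathcal{E}_F(\bar{x}),K)$, and one checks that $\WMin(\mathcal{E}_F(\bar{x}),K) = \WMin(F(\bar{x}),K)$ and $\Min(\mathcal{E}_F(\bar{x}),K) = \Min(F(\bar{x}),K)$, so the hypotheses of the inner lemma are met at each active index.

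Finally I would assemble the two inclusions. For ``$\subseteq$'', the supremum-subdifferential formula gives containment in the weak$^*$-closed convex hull over $\WMin$, which the monotonicity argument shrinks to the hull over $\Min$. For ``$\supseteq$'', each $\bar{y}\in\Min(F(\bar{x}),K)\subseteq\WMin(F(\bar{x}),K)$ is active, so $\partial g_{l,\bar{y}}(\bar{x}) = D^*\mathcal{E}_F(\bar{x},\bar{y})[\partial\Psi_e(0)] \subseteq \partial f_{l,\bar{x}}(\bar{x})$ by the elementary fact that the subdifferential of a supremum contains each active summand's subdifferential; taking the weak$^*$-closed convex hull of the union preserves this inclusion because $\partial f_{l,\bar{x}}(\bar{x})$ is itself weak$^*$-closed and convex. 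Combining the two directions yields the asserted equality.
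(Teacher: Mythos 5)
Your overall strategy coincides with the paper's own proof: write $f_{l,\bar{x}}$ as a supremum of the convex functions $g_{l,\bar{y}}$, apply an exact Valadier/Ioffe--Tikhomirov supremum--subdifferential formula, identify the active indices via Proposition \ref{prop: g =0}, insert the exact formula of Lemma \ref{lem: subdif inner g lower}, and collapse the union from weakly minimal to minimal indices using Lemma \ref{lem: monotonicity of coderivative} together with a domination property. The genuine gap sits exactly at the step you wave at: the exact supremum formula (the paper uses \cite[Proposition 4.5.2]{Schirotzek2007}) is an \emph{equality} only when the index set is a compact Hausdorff space and the map $\bar{y}\mapsto g_l(x,\bar{y})$ is upper semicontinuous on it for each fixed $x$. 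Your index set is $F(\bar{x})$ itself, which under the hypotheses is only \emph{strongly} $K$-compact, not compact, and asserting that strong $K$-compactness ``guarantees the compactness needed'' does not produce a compact index set. This is not a cosmetic issue: with a noncompact index set the inclusion $\partial f_{l,\bar{x}}(\bar{x})\subseteq \overline{\operatorname{conv}}^*\bigl(\bigcup_{\text{active}}\partial g_{l,\bar{y}}(\bar{x})\bigr)$ can fail even when the active set is nonempty (take $f_1\equiv 0$ and $f_n(x)=x-1/n$, $n\geq 2$, on $\R$: the only active index at $0$ is $n=1$, yet $\partial(\sup_n f_n)(0)=[0,1]\neq\{0\}$). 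You also never address the upper semicontinuity hypothesis.

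The paper closes precisely this hole with a reindexing step that is absent from your proposal: by strong $K$-compactness choose a compact $A\subseteq F(\bar{x})$ with $A+K=F(\bar{x})+K$; then the $-K$-monotonicity of $g_l(x,\cdot)$ (Proposition \ref{prop: g =0} $(i)$) gives
\begin{equation*}
f_{l,\bar{x}}(x)=\sup_{\bar{y}\in F(\bar{x})} g_l(x,\bar{y})=\sup_{\bar{y}\in F(\bar{x})+K} g_l(x,\bar{y})=\sup_{\bar{y}\in A+K} g_l(x,\bar{y})=\sup_{\bar{y}\in A} g_l(x,\bar{y}),
\end{equation*}
so the supremum rule is legitimately applied over the compact set $A$, after checking that $g_l(x,\cdot)$ is u.s.c.\ on $A$. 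The active set is then $\WMin(A,K)$, the domination property $A\subseteq \Min(A,K)+K$ of the \emph{compact} set $A$ performs the collapse via Lemma \ref{lem: monotonicity of coderivative}, and the identity $\Min(A,K)=\Min(F(\bar{x}),K)$ from \cite[Lemma 4.7]{jahn2011} yields the stated formula. Your versions of these steps --- active set $\WMin(F(\bar{x}),K)$ and domination for $F(\bar{x})$ itself --- are in fact true, but their proofs route through $A$ in exactly this way, so the reindexing cannot be skipped. A smaller inaccuracy: your claimed identity $\WMin(\mathcal{E}_F(\bar{x}),K)=\WMin(F(\bar{x}),K)$ is false in general (for $F(\bar{x})=\{0\}$ the left-hand side is $\bd K$ while the right-hand side is $\{0\}$); only the inclusion $\WMin(F(\bar{x}),K)\subseteq\WMin(\mathcal{E}_F(\bar{x}),K)$ holds, which fortunately is the direction needed to invoke Lemma \ref{lem: subdif inner g lower}. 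Your ``$\supseteq$'' direction, by contrast, is fine as stated, since it only uses the elementary fact that active members of a supremum have their subdifferentials contained in that of the supremum.
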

\begin{proof}
Under the assumptions of the theorem we can apply Theorem \ref{thm: convexity of scalarizations } $(i)$ to obtain that the functional $f_{l,\bar{x}}$ is convex and continuous at $\bar{x}.$ Hence, by \cite[Proposition 1.11]{Phe1989}, we have $\partial f_{l,\bar{x}}(\bar{x}) \neq \emptyset.$ Since $F(\bar{x})$ is strongly $K$- compact, there exists a compact set $A \subseteq F(\bar{x})$ such that $A+K=F(\bar{x})+K.$ Applying \cite[Lemma 4.7]{jahn2011}, we get that
\begin{equation}\label{eq: =min sets}
\Min(F(\bar{x}),K)= \Min(A,K)\neq \emptyset.
\end{equation} As in Lemma \ref{lem: subdif inner g lower} we consider, for $\bar{y} \in F(\bar{x}),$ the functional $g_{l,\bar{y}}:= g_l(\cdot,\bar{y}).$ Then, according to Proposition \ref{prop: g =0} $(i),$ the functional $g_l(x,\cdot)$ is $-K$ monotone for any $x \in X$. This implies

\begin{eqnarray*}
f_{l,\bar{x}}(x)&= &\sup_{\bar{y}\in F(\bar{x})} g_{l}(x,\bar{y})= \sup_{\bar{y}\in F(\bar{x})+K} g_{l}(x,\bar{y})= \sup_{\bar{y}\in A+K} g_{l}(x,\bar{y})\\
&= & \sup_{\bar{y}\in A} g_{l}(x,\bar{y}) = \sup_{\bar{y}\in A} g_{l,\bar{y}}(x).
\end{eqnarray*}

The above equation implies that $f_{l,\bar{x}}$ can be expressed as the pointwise supremum of the parametric family $\{g_{l,\bar{y}}\}_{\bar{y} \in A}$. In this context, it is stated in \cite[Proposition 4.5.2]{Schirotzek2007}  an exact formula for the subdifferential of the maximum of convex functions. In order to apply this proposition, it is sufficient to verify the following statements:

\begin{itemize}
\item $(A, \|\cdot\|)$ is a compact Hausdorff space.

This is obvious given our compactness assumption.

\item For any $ \bar{y} \in A,$ the functional $g_{l,\bar{y}}$ is convex and continuous at $\bar{x}.$

Since $A\subseteq F(\bar{x}),$ the statement follows directly from Lemma \ref{lem: convexity inner functions} $(i).$ 

\item For every $x\in X,$ the functional $g_l(x,\cdot)$  is u.s.c at every point of $A$.

Indeed, fix $x\in X$ and take $\bar{y}\in A,\alpha\in \Bbb R$ such that $g_l(x,\bar{y})<\alpha.$ This is equivalent to 

$$\inf_{y\in F(x)} \Psi_e(y-\bar{y}) < \alpha,$$ and hence we can find $y' \in F(x)$ such that $\Psi_e(y'-\bar{y})<\alpha.$ Because of the continuity of $\Psi_e,$ we can find a neighborhood $V(\bar{y})$ of $\bar{y}$ such that for every $z\in V(\bar{y}),$ the inequality $ \Psi_e(y'- z)< \alpha$ holds. This, together with the definition of $g_l(x, \cdot),$ gives us

$$\forall\; z\in V(\bar{y})\cap A: \; g_l(x,z)\leq \Psi_e(y'- z)< \alpha,$$ as desired. 

\end{itemize}

Applying now \cite[Proposition 4.5.2]{Schirotzek2007}, we obtain that 

\begin{equation}\label{eq: subdif l scal partial form}
\partial f_{l,\bar{x}}(\bar{x})=\overline{\operatorname{conv}}^*\left(\bigcup_{\bar{y} \in \tilde{S}} \partial g_{l,\bar{y}}(\bar{x}) \right),
\end{equation} where $$\tilde{S}= \{\bar{y}\in A : g_{l,\bar{y}}(\bar{x})= f_{l,\bar{x}}(\bar{x})\}.$$ Recall that $\WMin(F(\bar{x}),K) \neq \emptyset$ according to \eqref{eq: =min sets}. Then, by Proposition \ref{prop: g =0} $(iii),$ we know that $f_{l,\bar{x}}(\bar{x})=0.$ Hence, $\bar{y} \in \tilde{S}$ if and only if $g_{l,\bar{y}}(\bar{x})=0.$  Fix $\bar{y}\in A.$  Note that, because of the monotonicity of $\Psi_e,$ we have
\begin{eqnarray*}
g_{l,\bar{y}}(\bar{x}) &= & \inf_{y\in F(\bar{x})} \Psi_e(y-\bar{y})
                       = \inf_{y\in F(\bar{x})+K} \Psi_e(y-\bar{y})\\
                      & = &\inf_{y\in A+ K} \Psi_e(y-\bar{y})
                       = \inf_{y\in A} \Psi_e(y-\bar{y}).
\end{eqnarray*}
Then, following the same lines in the proof of Proposition \ref{prop: g =0} $(i),$ we get $$\inf \limits_{y\in A} \Psi_e(y-\bar{y})=0 \Longleftrightarrow \bar{y}\in \WMin(A,K).$$ This shows that 

\begin{equation}\label{eq: active set in convex case l}
\tilde{S}= \WMin(A,K).
\end{equation}
Now, since $A$ is compact, we can apply \cite[Proposition 9.3.7]{KTZ} to obtain that $A$ satisfies the so called domination property, i.e, 

\begin{equation}\label{eq: domination property}
A \subseteq \Min (A,K)+K.
\end{equation}
Hence, taking into account \eqref{eq: subdif l scal partial form}, \eqref{eq: active set in convex case l} and Lemma \ref{lem: subdif inner g lower},  we obtain

\begin{equation}\label{eq: partial equality subdif f_l}
	\partial f_{l,\bar{x}}(\bar{x})  = \overline{\operatorname{conv}}^*\left(\bigcup_{\bar{y} \in \WMin(A,K)} D^* \mathcal{E}_F(\bar{x},\bar{y})\left[\partial \Psi_e(0)\right]\right).
\end{equation}
By \eqref{eq: domination property}, for every $\bar{y} \in \WMin(A,K)$ there exists $\bar{y}_1 \in \Min(A,K)$ such that $ \bar{y}_1 \preceq_K \bar{y}.$ This, together with the fact that $\partial \Psi_e(0) \subseteq K^*,$ allows us to apply  Lemma \ref{lem: monotonicity of coderivative} to obtain 

\begin{equation}\label{eq: monotone cod}
	D^* \mathcal{E}_F(\bar{x},\bar{y})\left[\partial \Psi_e(0)\right] \subseteq  D^* \mathcal{E}_F(\bar{x},\bar{y}_1)\left[\partial \Psi_e(0)\right] .
\end{equation}

Combining equations \eqref{eq: partial equality subdif f_l} and  \eqref{eq: monotone cod}, we have 

\begin{eqnarray*}
\partial f_{l,\bar{x}}(\bar{x}) &= &\overline{\operatorname{conv}}^*\left(\bigcup_{\bar{y} \in \WMin(A,K)} D^* \mathcal{E}_F(\bar{x},\bar{y})\left[\partial \Psi_e(0)\right]\right) \\
&\subseteq & \overline{\operatorname{conv}}^*\left(\bigcup_{\bar{y}_1 \in \Min(A,K)} D^* \mathcal{E}_F(\bar{x},\bar{y}_1)\left[\partial \Psi_e(0)\right]\right) .        
\end{eqnarray*}
Since the reverse inclusion is obviously true, we obtain 

$$\partial f_{l,\bar{x}}(\bar{x}) =\overline{\operatorname{conv}}^*\left(\bigcup_{\bar{y} \in \Min(A,K)} D^* \mathcal{E}_F(\bar{x},\bar{y})\left[\partial \Psi_e(0)\right]\right).$$  The desired result follows from \eqref{eq: =min sets}.
\end{proof}

Next, we analyze the case on which $F$ is locally Lipschitzian at $\bar{x}.$ Similar to the convex case, we start by establishing an upper estimate of the subdifferential of the inner function.

\begin{Lemma}\label{lem: upper estimate inner lower}
Let Assumption \ref{ass 1} be fulfilled  with $X, Y$ being Asplund, and let $\bar{y} \in \WMin(F(\bar{x}),K).$ Suppose also that:

\begin{enumerate}

\item $F$ is closed at $\bar{x},$

\item $S^{l,1}_F(x,y)$ is inner semicompact at $(\bar{x},\bar{y})$,

\item $\gph F$ is locally closed around every point in the set $\{\bar{x}\}\times F(\bar{x})\cap(\bar{y}-  \bd K).$
\end{enumerate}

Then,

\begin{equation}\label{eq: subdif f1 prelim 2}
\partial g_l(\bar{x},\bar{y})\subseteq \bigcup_{\underset{z^* \in \partial \Psi_e(\bar{z}-\bar{y})}{\bar{z}\in F(\bar{x})\cap (\bar{y}-\bd K)}}  D^*F(\bar{x},\bar{z})(z^*) \times \{-z^*\}.
\end{equation} 
\end{Lemma}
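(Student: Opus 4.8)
The plan is to recognize $g_l$ as a two-variable marginal function and apply Theorem \ref{thm: basic subdif marginal functions theorem} on the enlarged parameter space $\tilde X:=X\times Y$, which is Asplund because both $X$ and $Y$ are. Concretely, I would introduce the set-valued mapping $\tilde F:X\times Y\rightrightarrows Y$ and the functional $\tilde f:(X\times Y)\times Y\to\R$ given by
\[
\tilde F(x,y):=F(x),\qquad \tilde f\big((x,y),z\big):=\Psi_e(z-y),
\]
so that $g_l(x,y)=\inf_{z\in\tilde F(x,y)}\tilde f\big((x,y),z\big)$ is exactly the marginal function associated with the pair $(\tilde F,\tilde f)$, and its solution map coincides with $S^{l,1}_F$. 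The subdifferential $\partial g_l(\bar x,\bar y)$ is then taken with respect to the single joint variable $(x,y)\in\tilde X$, and the estimate of the theorem will yield the claimed inclusion once I have identified the three building blocks $\tilde S(\bar x,\bar y)$, $\partial\tilde f$ and $D^*\tilde F$.

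Next I would verify the four hypotheses of Theorem \ref{thm: basic subdif marginal functions theorem}. Closedness of $\tilde F$ at $(\bar x,\bar y)$ follows from closedness of $F$ at $\bar x$, since $\gph\tilde F=\{((x,y),z):(x,z)\in\gph F\}$ and the $y$-coordinate plays no role; inner semicompactness of the solution map is precisely assumption (2), because that solution map is $S^{l,1}_F$; and the Lipschitz requirement on $\tilde f$ holds globally on $(X\times Y)\times Y$, as $\tilde f$ depends only on $z-y$ and $\Psi_e$ is Lipschitzian on $Y$ by Proposition \ref{prop: properties of tammer function}(ii). For the local-closedness hypothesis I first compute the solution set: since $\bar y\in\WMin(F(\bar x),K)$, Proposition \ref{prop: g =0}(i) gives $g_l(\bar x,\bar y)=0$, and the representability property (Proposition \ref{prop: properties of tammer function}(vi)) yields $\Psi_e(z-\bar y)=0\iff z-\bar y\in-\bd K$; hence
\[
\tilde S(\bar x,\bar y)=S^{l,1}_F(\bar x,\bar y)=F(\bar x)\cap(\bar y-\bd K).
\]
Local closedness of $\gph\tilde F$ around each point of $\{(\bar x,\bar y)\}\times\tilde S(\bar x,\bar y)$ then transfers directly from assumption (3) on $\gph F$, again because the extra $y$-factor is free.

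Finally I would compute the two dual objects and assemble. The functional $\tilde f=\Psi_e\circ A$ with the continuous linear map $A(x,y,z)=z-y$ is convex and continuous, so the convex chain rule gives $\partial\tilde f\big((\bar x,\bar y),\bar z\big)=A^*\partial\Psi_e(\bar z-\bar y)$; since $A^*z^*=(0,-z^*,z^*)$, a pair $(x^*,y^*)$ in this subdifferential has the form $x^*=(0,-z^*)\in X^*\times Y^*$ and $y^*=z^*$ with $z^*\in\partial\Psi_e(\bar z-\bar y)$. For the coderivative, writing $\gph\tilde F$ as $\gph F$ with the $y$-slot ranging over all of $Y$ forces the corresponding component of every normal to vanish, so $D^*\tilde F\big((\bar x,\bar y),\bar z\big)(z^*)=D^*F(\bar x,\bar z)(z^*)\times\{0\}$. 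Substituting these into the theorem's estimate, each summand $x^*+D^*\tilde F((\bar x,\bar y),\bar z)(z^*)$ becomes $(0,-z^*)+\big(D^*F(\bar x,\bar z)(z^*)\times\{0\}\big)=D^*F(\bar x,\bar z)(z^*)\times\{-z^*\}$, and taking the union over $\bar z\in F(\bar x)\cap(\bar y-\bd K)$ and $z^*\in\partial\Psi_e(\bar z-\bar y)$ gives exactly \eqref{eq: subdif f1 prelim 2}. I expect the main obstacle to be purely the bookkeeping in the product spaces: one must keep track that the parameter dual is $X^*\times Y^*$, transport both the local-closedness assumption and the coderivative correctly through the inert $y$-coordinate, and confirm that the free coordinate forces the $Y^*$-component of the normal cone to be zero; everything else is routine application of the cited results.
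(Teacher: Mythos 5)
Your proposal is correct and follows essentially the same route as the paper's own proof: the same auxiliary pair $\tilde F(x,y)=F(x)$, $\tilde f((x,y),z)=\Psi_e(z-y)$, the same verification of the four hypotheses of Theorem \ref{thm: basic subdif marginal functions theorem} (with the solution set $F(\bar x)\cap(\bar y-\bd K)$ computed via Proposition \ref{prop: g =0} $(i)$ and the representability property), the same chain-rule computation $A^*z^*=(0,-z^*,z^*)$, and the same coderivative identity $D^*\tilde F((\bar x,\bar y),\bar z)(z^*)=D^*F(\bar x,\bar z)(z^*)\times\{0\}$. No gaps; the assembly of the final inclusion matches the paper exactly.
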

\begin{proof}
Consider the set-valued mapping $\tilde{F}: X\times Y \rightrightarrows Y$ and the functional $f:X\times Y\times Y \to \R$ defined as

$$\tilde{F}(x,y):= F(x),\;\; f(x,y,z):= \Psi_e(z-y).$$ Thus, we have 

$$g_l(x,y)= \inf_{z\in \tilde{F}(x,y)}f(x,y,z).$$ Now, we check that it is possible to apply Theorem \ref{thm: basic subdif marginal functions theorem}. First, note that the associated solution map in this case is just $S^{l,1}_F,$ from Definition \ref{def: solution maps lower}. Next, observe that  $g_l(\bar{x},\bar{y})=0 $ by Proposition \ref{prop: g =0} $(i)$. Hence, using the representability property of $\Psi_e$ we get 

\begin{equation}\label{eq: Sl1(xbar,ybar)}
S^{l,1}_F(\bar{x},\bar{y})=\{z\in F(\bar{x}):\Psi_e(z-\bar{y})=0\}= F(\bar{x})\cap (\bar{y}-\bd K) \supseteq \{\bar{y}\} \neq  \emptyset.
\end{equation} We proceed to check that the hypothesis of the theorem are fulfilled.

\begin{itemize}

\item $\tilde{F}$ is  closed at $(\bar{x},\bar{y}).$

This is obvious given the definition of $\tilde{F}$ and condition $(i)$ above.

\item $S^{l,1}_F$  is inner semicompact at $(\bar{x}, \bar{y}).$

This is precisely condition $(ii)$ in the lemma.

\item There is a neighborhood $U'$ of $(\bar{x},\bar{y})$  such that $f$ is Lipschitzian on $U' \times Y.$

This follows directly from the definition of $f$ and Proposition \ref{prop: properties of tammer function} $(ii).$

\item $\gph \tilde{F}$ is locally closed around every point in the set $\{(\bar{x},\bar{y})\}\times S^{l,1}_F(\bar{x},\bar{y}).$

Taking into account \eqref{eq: Sl1(xbar,ybar)}, the statement follows from condition $(iii).$

\end{itemize}

Applying now Theorem \ref{thm: basic subdif marginal functions theorem} we obtain
\begin{equation}\label{eq: subdif f1 prelim}
\partial g_l(\bar{x},\bar{y})\subseteq  \bigcup_{\underset{(x^*,y^*,z^*) \in \partial f(\bar{y},\bar{y},\bar{z})}{\bar{z}\in F(\bar{x})\cap (\bar{y}-\bd K)}}  \bigg\{(x^*,y^*)+ D^* \tilde{F}(\bar{x},\bar{y},\bar{z})(z^*)\bigg\}.
\end{equation} 
We now simplify the above inclusion. The first step will be to examine $D^*\tilde{F}(\bar{x},\bar{y},\bar{z}).$  Note that 
$$\gph \tilde{F} = \{(x,y,z): z\in F(x)\}.$$ 
Hence, we obtain
$$N((\bar{x},\bar{y},\bar{z}),\gph \tilde{F})= \{(x^*,0,z^*) \in X^*\times Y^*\times Y^*: (x^*,z^*)\in N((\bar{x},\bar{z}),\gph F)\}.$$ From this we deduce that 
\begin{eqnarray}\label{eq: D^*F2}
 \nonumber D^*\tilde{F}(\bar{x},\bar{y},\bar{z})(z^*) & = & \{(x^*,0)\in X^*\times Y^*: (x^*,-z^*) \in N((\bar{x},\bar{z}),\gph F) \}\\ 
                                     & = & D^*F(\bar{x},\bar{z})(z^*) \times \{0\}.
\end{eqnarray}

Next, we compute $\partial f(\bar{x},\bar{y},\bar{z}).$  For this, we first note that $f$ is convex and continuous at every point. Considering the operator $T \in \mathcal{L}(X\times Y \times Y, Y)$ defined as $T(x,y,z):= z-y,$ we get $f= \Psi_e \circ T.$ By the classical chain rule in convex analysis  \cite[Proposition 3.28]{peypouquet2015}, we now obtain

$$\partial f(\bar{x},\bar{y},\bar{z})= \partial f(\bar{x},\bar{y},\bar{z}) = T^*[\partial \Psi_e(\bar{z}-\bar{y})]= T^*[\partial \Psi_e(\bar{z}-\bar{y})],$$ where $T^*$ denotes the adjoint operator of $T.$ Moreover, it is easy to check that $T^*(z^*)= (0,-z^*,z^*).$ Hence, we get
\begin{equation}\label{eq: subdif f2}
\partial f(\bar{x},\bar{y},\bar{z}) = \{0\}\times \bigcup_{z^* \in  \partial \Psi_e(\bar{z}-\bar{y}) }(-z^*,z^*).
\end{equation}
Substituting  now \eqref{eq: D^*F2} and \eqref{eq: subdif f2} into \eqref{eq: subdif f1 prelim}, the desired estimate is obtained.

\end{proof}

\begin{Theorem}\label{thm: upper estimate subdiff f_l}
In addition to Assumption 1, let $X$ and $Y$ be Asplund. Suppose also that:

\begin{enumerate}
\item $F$ is locally Lipschitzian at $\bar{x},$

\item $\WMin(F(\bar{x}),K)\neq \emptyset,$

\item $F$ is closed at $\bar{x}$,

\item $S^{l,1}_F$ is inner semicompact at every point of  $\{\bar{x}\}\times\WMin(F(\bar{x}),K),$

\item $S^{l,2}_F$ is inner semicompact at $\bar{x},$

\item $\gph F$ is locally closed around every point in the set $\{\bar{x}\}\times\WMin(F(\bar{x}),K).$

\end{enumerate} Then,

\begin{equation}\label{eq: upper estimate subdiff f_l}
\partial f_{l,\bar{x}}(\bar{x})\subseteq \overline{\operatorname{conv}}^*\left( \bigcup_{\bar{y}\in \WMin(F(\bar{x}),K)} \Bigg\{x^*\in X^*: \exists \;  y^* \in N(\bar{y},F(\bar{x})):  (x^*,y^*) \in G_{(\bar{x},\bar{y})} \Bigg\}\right),
\end{equation} where

$$G_{(\bar{x},\bar{y})}= \overline{\operatorname{conv}}^*\left( \bigcup_{\underset{z^* \in \partial \Psi_e(\bar{z}-\bar{y})}{\bar{z}\in F(\bar{x})\cap (\bar{y}-\bd K)}} D^*F(\bar{x},\bar{z})(z^*) \times \{-z^*\} \right).$$
\end{Theorem}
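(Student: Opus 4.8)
The plan is to view $f_{l,\bar{x}}$ as a supremum-type marginal function and to reduce the computation to two nested applications of the marginal-function calculus, Theorem \ref{thm: basic subdif marginal functions theorem} and Lemma \ref{lem: upper estimate inner lower}, glued together by the passage-to-$-f$ estimate of Remark \ref{rem: subdif -f}. First I would record the two facts that make the reduction legitimate: since $\WMin(F(\bar{x}),K)\neq\emptyset$, Proposition \ref{prop: g =0} $(iii)$ gives $f_{l,\bar{x}}(\bar{x})=0$, and Theorem \ref{thm: Lipschitz of scalarizations} $(i)$ gives that $f_{l,\bar{x}}$ is locally Lipschitzian at $\bar{x}$. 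Setting $h:=-f_{l,\bar{x}}$ and invoking Remark \ref{rem: subdif -f} (valid because $X$ is Asplund and $h$ is locally Lipschitzian), we have $\partial f_{l,\bar{x}}(\bar{x})\subseteq -\overline{\operatorname{conv}}^*\big(\partial h(\bar{x})\big)$, so it suffices to estimate $\partial h(\bar{x})$.

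Next I would rewrite $h$ as an infimum-marginal function, $h(x)=\inf_{\bar{y}\in \hat{F}(x)}\hat{f}(x,\bar{y})$, with the constant multifunction $\hat{F}(x):=F(\bar{x})$ and integrand $\hat{f}(x,\bar{y}):=-g_l(x,\bar{y})$; the associated solution map is exactly $S^{l,2}_F$. I would then verify the hypotheses of Theorem \ref{thm: basic subdif marginal functions theorem}: $F(\bar{x})$ is closed (take constant sequences in the closedness of $F$ at $\bar{x}$), so $\hat{F}$ is closed and $\gph\hat{F}=X\times F(\bar{x})$ is globally, hence locally, closed; $S^{l,2}_F$ is inner semicompact at $\bar{x}$ by hypothesis $(v)$; and $-g_l$ is Lipschitzian on $U\times Y$ by Lemma \ref{lem: Lipschitz of inner functions} $(i)$, using $g_l(\bar{x},\bar{y})=0>-\infty$ for any $\bar{y}\in\WMin(F(\bar{x}),K)$. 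The theorem then yields $\partial h(\bar{x})\subseteq\bigcup_{\bar{y}\in S^{l,2}_F(\bar{x})}\bigcup_{(x^*,y^*)\in\partial\hat{f}(\bar{x},\bar{y})}[x^*+D^*\hat{F}(\bar{x},\bar{y})(y^*)]$. Two simplifications follow: $S^{l,2}_F(\bar{x})=\WMin(F(\bar{x}),K)$, since $f_{l,\bar{x}}(\bar{x})=0$ and $g_l(\bar{x},\cdot)=0$ characterizes weak minimality by Proposition \ref{prop: g =0} $(i)$; and, because $\gph\hat{F}=X\times F(\bar{x})$, one computes $N((\bar{x},\bar{y}),\gph\hat{F})=\{0\}\times N(\bar{y},F(\bar{x}))$, whence $D^*\hat{F}(\bar{x},\bar{y})(y^*)=\{0\}$ when $-y^*\in N(\bar{y},F(\bar{x}))$ and is empty otherwise. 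This is exactly the mechanism that injects the normal cone $N(\bar{y},F(\bar{x}))$ into the final estimate.

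It then remains to bound the inner subdifferential $\partial\hat{f}(\bar{x},\bar{y})=\partial(-g_l)(\bar{x},\bar{y})$. Applying Remark \ref{rem: subdif -f} a second time, on the Asplund space $X\times Y$, gives $\partial(-g_l)(\bar{x},\bar{y})\subseteq-\overline{\operatorname{conv}}^*(\partial g_l(\bar{x},\bar{y}))$, while Lemma \ref{lem: upper estimate inner lower} bounds $\partial g_l(\bar{x},\bar{y})$ by the union defining $G_{(\bar{x},\bar{y})}$; taking $\overline{\operatorname{conv}}^*$ of both sides yields $\partial\hat{f}(\bar{x},\bar{y})\subseteq-G_{(\bar{x},\bar{y})}$. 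To justify Lemma \ref{lem: upper estimate inner lower} at each $\bar{y}\in\WMin(F(\bar{x}),K)$, I would use hypotheses $(iii)$ and $(iv)$ for closedness of $F$ and inner semicompactness of $S^{l,1}_F$, and deduce the required local closedness of $\gph F$ around $\{\bar{x}\}\times(F(\bar{x})\cap(\bar{y}-\bd K))$ from hypothesis $(vi)$ via the inclusion $F(\bar{x})\cap(\bar{y}-\bd K)\subseteq\WMin(F(\bar{x}),K)$. This inclusion follows from $\Int K+K\subseteq\Int K$: if $\bar{z}=\bar{y}-k$ with $k\in\bd K\subseteq K$ and some $\bar{w}\in F(\bar{x})$ lay in $\bar{z}-\Int K$, then $\bar{w}\in\bar{y}-(k+\Int K)\subseteq\bar{y}-\Int K$, contradicting weak minimality of $\bar{y}$.

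Finally I would assemble the pieces. Combining the outer formula with $D^*\hat{F}(\bar{x},\bar{y})(y^*)\subseteq\{0\}$ and $\partial\hat{f}(\bar{x},\bar{y})\subseteq-G_{(\bar{x},\bar{y})}$ shows that every $x^*\in\partial h(\bar{x})$ admits, for some $\bar{y}\in\WMin(F(\bar{x}),K)$, a $y^*$ with $(x^*,y^*)\in-G_{(\bar{x},\bar{y})}$ and $-y^*\in N(\bar{y},F(\bar{x}))$. Substituting $(u^*,v^*)=(-x^*,-y^*)$ turns $-\partial h(\bar{x})$ into precisely the pre-closure set on the right-hand side of the claimed inclusion, and using $-\overline{\operatorname{conv}}^*(\cdot)=\overline{\operatorname{conv}}^*(-\,\cdot)$ delivers the result. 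I expect the main obstacle to be the careful sign bookkeeping across the two independent uses of Remark \ref{rem: subdif -f} — tracking how each negation interacts with $\overline{\operatorname{conv}}^*$, with the coderivative of the constant map, and with $G_{(\bar{x},\bar{y})}$ — together with the verification that hypothesis $(vi)$, stated only on $\WMin(F(\bar{x}),K)$, is enough to feed Lemma \ref{lem: upper estimate inner lower}, which is exactly where the inclusion $F(\bar{x})\cap(\bar{y}-\bd K)\subseteq\WMin(F(\bar{x}),K)$ is needed.
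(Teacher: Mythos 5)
Your proposal is correct and takes essentially the same route as the paper's proof: negate $f_{l,\bar{x}}$, apply Remark \ref{rem: subdif -f} and Theorem \ref{thm: basic subdif marginal functions theorem} to the constant multifunction $\tilde{F}(x)=F(\bar{x})$ with integrand $-g_l$ (whose solution map is $S^{l,2}_F$ and whose coderivative injects $N(\bar{y},F(\bar{x}))$), and then bound $\partial(-g_l)(\bar{x},\bar{y})$ via a second use of Remark \ref{rem: subdif -f} together with Lemma \ref{lem: upper estimate inner lower}. A minor bonus of your writeup is that you explicitly prove $F(\bar{x})\cap(\bar{y}-\bd K)\subseteq \WMin(F(\bar{x}),K)$, which is exactly what is needed for hypothesis $(vi)$ to supply the local-closedness assumption of Lemma \ref{lem: upper estimate inner lower}; the paper uses this silently (and its citation of Lemma \ref{lem: subdif inner g lower} at that step appears to be a typo for Lemma \ref{lem: upper estimate inner lower}).
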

\begin{proof}

Consider the (constant) set-valued mapping $\tilde{F}:X\rightrightarrows Y$ defined as $\tilde{F}(x):=F(\bar{x})$ for every $x\in X.$ Then, we can write

$$f_{l,\bar{x}}(x)= \sup_{y\in \tilde{F}(x)}g_l(x,y).$$ Next, note that the solution map in this case is $S^{l,2}_F.$ Furthermore, as a consequence of $(ii)$ and Proposition \ref{prop: g =0} $(iii),$ we obtain $f_{l,\bar{x}}(\bar{x})=0.$ The definition of $\tilde{F}$ and $g_l$ allow us then to apply Proposition \ref{prop: g =0} $(i)$ to obtain that $$S^{l,2}_F(\bar{x})= \WMin(F(\bar{x}),K).$$ We now check that it is possible to apply Theorem \ref{thm: basic subdif marginal functions theorem} to obtain an upper estimate of Mordukhovich's subdifferential of $f_{l,\bar{x}}$ at $\bar{x}.$

\begin{itemize}

\item  $\tilde{F}$ is closed at $\bar{x}.$

It is easy to see that the closedness of $\tilde{F}$ at $\bar{x}$ is equivalent to the closedness of the set $F(\bar{x}).$ The statement follows from condition $(iii).$

\item $S^{l,2}_F$ is inner semicompact at $\bar{x}.$

This is precisely condition $(v).$

\item There is a neighborhood $U$ of $\bar{x}$ such that $g_l$ is Lipschitzian on $U\times Y.$

This follows from conditions $(i),(ii)$ and Lemma \ref{lem: Lipschitz of inner functions} $(i).$

\item $\gph \tilde{F}$ is locally closed around every point of the set  $\{\bar{x}\} \times S^{l,2}_F(\bar{x}).$

Again, this is deduced from the fact that $F(\bar{x})$ is a closed set, which is implied by $(iii).$
\end{itemize}

Hence, taking into account the Lipschitzianity of $f_{l,\bar{x}}$ from Theorem \ref{thm: Lipschitz of scalarizations} $(i)$, we obtain:

\begin{eqnarray}\label{eq: clark f_l preliminar}
\partial f_{l,\bar{x}}(\bar{x}) & = & \partial \left(-\inf_{y\in \tilde{F}(\cdot)} -g_l(\cdot,y) \right)(\bar{x})\nonumber\\
                                  & \overset{\left(\textrm{Remark }\ref{rem: subdif -f}\right)}{\subseteq} & - \overline{\operatorname{conv}}^* \left(  \partial \left(\inf_{y\in \tilde{F}(\cdot)} -g_l(\cdot,y) \right)(\bar{x})\right)\nonumber\\
                       &\overset{(\textit{Theorem }\ref{thm: basic subdif marginal functions theorem} )}{\subseteq} & -\overline{\operatorname{conv}}^*\left( \bigcup_{\underset{(x^*,y^*)\in \partial\left(- g_l\right)(\bar{x},\bar{y})}{\bar{y}\in S^{l,2}_F(\bar{x})}}  \bigg[ x^*+ D^* \tilde{F}(\bar{x},\bar{y})(y^*) \bigg] \right).
\end{eqnarray} Now, we examine $D^* \tilde{F}(\bar{x},\bar{y})$ for any $(\bar{x},\bar{y})\in X\times Y.$ Since $\gph \tilde{F}= X\times F(\bar{x}),$ we get in this case $N((\bar{x},\bar{y}),\gph \tilde{F})= \{0\} \times N(\bar{y}, F(\bar{x})).$ From this, we deduce that 

$$D^* \tilde{F}(\bar{x},\bar{y})(y^*)= \left\{
\begin{array}{ll}
      \{0\}, & \operatorname{if } y^* \in -N(\bar{y},F(\bar{x})),   \\
      \,\,\,\emptyset, & \operatorname{otherwise}. \\
\end{array} 
\right.  $$ Plugging this back into \eqref{eq: clark f_l preliminar} and taking into account that $S^{l,2}_F(\bar{x})= \WMin(F(\bar{x}),K),$ we obtain 
\begin{equation}\label{eq: subdif f-lower prelim}
\partial f_{l,\bar{x}}(\bar{x})\subseteq \overline{\operatorname{conv}}^*\left( \bigcup_{\bar{y}\in \WMin(F(\bar{x}),K)} \bigg\{x^*\in X^*: \exists \;  y^* \in  N(\bar{y},F(\bar{x})): -(x^*,y^*) \in \partial (-g_l)(\bar{x},\bar{y})\bigg\}\right). 
\end{equation} 

On the other hand, taking into account the Lipschitzianity of $g_l$ from  Lemma \ref{lem: Lipschitz of inner functions} $(i),$ for every $\bar{y} \in \WMin(F(\bar{x}),K)$ we also have:

\begin{eqnarray}\label{eq: estimate subdif -g_l}
\partial (-g_l)(\bar{x},\bar{y}) & \overset{\left(\textrm{Remark }\ref{rem: subdif -f}\right)}{\subseteq} & - \overline{\operatorname{conv}}^* \left(\partial g_l(\bar{x},\bar{y})\right)\nonumber\\
                                   & \overset{\left(\textrm{Lemma }\ref{lem: subdif inner g lower}\right)}{\subseteq} & -\overline{\operatorname{conv}}^* \left( \bigcup_{\underset{z^* \in \partial \Psi_e(\bar{z}-\bar{y})}{\bar{z}\in F(\bar{x})\cap (\bar{y}-\bd K)}}  D^*F(\bar{x},\bar{z})(z^*) \times \{-z^*\} \right).
\end{eqnarray}

Finally, by putting \eqref{eq: estimate subdif -g_l}
 back into \eqref{eq: subdif f-lower prelim}, we obtain our desired estimate.
\end{proof}

\begin{Remark}\label{rem: f_l invariant}
According to Remark \ref{rem: replace F by epiF}, the scalarizing functional $f_{l,\bar{x}}$ would remain unchanged if we substitute $F$ by a set-valued mapping $\tilde{F}: X \rightrightarrows Y$ of the form $\tilde{F}(x) = F(x) + A,$ with $A \subseteq K$ and $0 \in A.$ Hence, in Theorem \ref{thm: upper estimate subdiff f_l} we can substitute $F$ by any other set-valued mapping  $\tilde{F}$ of the above form. By doing this, we can obtain different (maybe sharper) upper estimates of $\partial f_{l,\bar{x}} (\bar{x}).$ This is worth keeping in mind when obtaining optimality conditions for set optimization problems, as these are based on the subdifferential of $f_{l,\bar{x}} (\bar{x}),$ see Section \ref{sec: opt cond}.
\end{Remark}

\begin{Remark}\label{rem: convexity of upper estimate is necessary}
Note that, since the upper estimate of $\partial f_{l,\bar{x}}(\bar{x})$ obtained in \eqref{eq: upper estimate subdiff f_l} is convex, it also constitutes an upper estimate of $\partial^\circ f_{l,\bar{x}}(\bar{x})$  according to \cite[Theorem 3.57]{Mordukhovich1}. However, as we will see in  Example \ref{ex: master example}, when applying this result to optimality conditions for set optimization problems, the convexity of the upper estimate  can not be removed very easily.

\end{Remark}

The following corollary shows that if $Y$ is finite dimensional our assumptions in Theorem \ref{thm: upper estimate subdiff f_l} are natural.
\begin{Corollary}\label{cor: f_l subdif}
Let Assumption \ref{ass 1} be fulfilled with $X$ being Asplund. Suppose that $Y$ is finite dimensional and that $\gph F$ is closed. Furthermore, assume that $F$ is locally Lipschitzian and locally bounded at $\bar{x}.$ Then, inclusion \eqref{eq: upper estimate subdiff f_l} holds. 
\end{Corollary}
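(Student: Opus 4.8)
The plan is to verify that the comparatively mild hypotheses of the corollary already imply all six conditions of Theorem~\ref{thm: upper estimate subdiff f_l}; once this is done, the inclusion~\eqref{eq: upper estimate subdiff f_l} follows at once from that theorem. No new estimate has to be produced, so the entire task is a hypothesis check.

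The structural fact driving everything is that, under our assumptions, $F$ has compact values near $\bar{x}$. Indeed, local boundedness of $F$ at $\bar{x}$ furnishes a neighborhood $U$ of $\bar{x}$ and $L>0$ with $F[U]\subseteq L\mathbb{B}$, so $F[U]$ is bounded. Since $\gph F$ is closed, each section $F(x)=\{y\mid (x,y)\in\gph F\}$ is closed, and therefore for every $x\in U$ the set $F(x)$ is closed and bounded, hence compact because $Y$ is finite dimensional. I would record this observation once and reuse it throughout. With it in hand, conditions (i), (iii), and (vi) of Theorem~\ref{thm: upper estimate subdiff f_l} are immediate: (i) is assumed outright, while (iii) and (vi) are consequences of the global closedness of $\gph F$. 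For condition (ii), note that $F(\bar{x})$ is nonempty (as $\bar{x}\in\Omega\subseteq\Int\dom F$) and compact; since $K$ is a closed, convex, pointed cone, such a set possesses minimal elements, so $\WMin(F(\bar{x}),K)\supseteq\Min(F(\bar{x}),K)\neq\emptyset$.

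It remains to check the two inner-semicompactness requirements (iv) and (v), which is where finite dimensionality and local boundedness are genuinely used. For $S^{l,2}_F$, the map $y\mapsto g_l(x,y)=\inf_{z\in F(x)}\Psi_e(z-y)$ is upper semicontinuous, being a pointwise infimum of functions continuous in $y$; hence the supremum defining $f_{l,\bar{x}}(x)$ is attained over the compact set $F(\bar{x})$, giving $S^{l,2}_F(x)\neq\emptyset$. As $S^{l,2}_F(x)\subseteq F(\bar{x})$ and $F(\bar{x})$ is compact, inner semicompactness at $\bar{x}$ holds trivially. For $S^{l,1}_F$, fix a point $(\bar{x},\bar{y})$ with $\bar{y}\in\WMin(F(\bar{x}),K)$ and take any sequence $(x_k,y_k)\to(\bar{x},\bar{y})$; for $k$ large the map $z\mapsto\Psi_e(z-y_k)$ is continuous and $F(x_k)$ is compact, so the infimum $g_l(x_k,y_k)$ is attained and $S^{l,1}_F(x_k,y_k)\neq\emptyset$. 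Choosing any $z_k\in S^{l,1}_F(x_k,y_k)$, all the $z_k$ lie in the bounded set $F[U]$ and hence admit a convergent subsequence, which is exactly inner semicompactness.

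I expect the inner semicompactness of $S^{l,1}_F$ to be the most delicate point, and it is worth isolating it as the crux of the argument: one must first secure nonemptiness of the solution sets — which rests precisely on the compactness of the values $F(x_k)$, so that the defining infimum is attained — before the boundedness of $F[U]$ can be invoked to extract the convergent subsequence. Everything else reduces to the elementary principle that, in a finite-dimensional space, closed and bounded sets are compact, together with the $\Psi_e$-properties already collected in Proposition~\ref{prop: properties of tammer function}.
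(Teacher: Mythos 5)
Your proposal is correct and follows essentially the same route as the paper's own proof: both reduce the corollary to a verification of the six hypotheses of Theorem~\ref{thm: upper estimate subdiff f_l}, using closedness of $\gph F$, local boundedness, and finite dimensionality of $Y$ to get compactness of the values of $F$ near $\bar{x}$, whence $\WMin(F(\bar{x}),K)\neq\emptyset$ and the inner semicompactness of $S^{l,1}_F$ and $S^{l,2}_F$. Your treatment is in fact slightly more careful than the paper's, which silently assumes the nonemptiness of the solution maps (i.e.\ attainment of the defining infima/suprema) that you justify explicitly via upper semicontinuity and compactness.
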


\begin{proof}
Since $\gph F$ is closed, in particular we have that $F$ is closed valued. This, together with the local boundedness at $\bar{x}$ and the finite dimensionality of $Y,$ gives us the compactness of $F(\bar{x}).$ Hence, according to \cite[Theorem 6.3]{jahn2011}, we have $\WMin(F(\bar{x}),K)\neq \emptyset.$ Furthermore, the local boundedness of $F$ at $\bar{x}$ also implies that of the set-valued mappings $S^{l,1}_F$ and $S^{l,2}_F$ in the statement of Theorem \ref{thm: upper estimate subdiff f_l}. This, together with the fact that  $Y$ is finite dimensional gives us the inner semicompactness of $S^{l,1}_F$ and $S^{l,2}_F.$ Thus, all the conditions of Theorem  \ref{thm: upper estimate subdiff f_l} are satisfied. The statement follows.

\end{proof}

\section{Subdifferential of the scalarizing functional associated to the upper less relation}\label{sec: subdifferential u}

In this section, we compute an approximation of the subdifferential of the functional $f_{u,\bar{x}}$ given in Definition \ref{def- scalarization} at the point $\bar{x}.$ We start again by defining two useful solution maps. 

\begin{Definition}
Let Assumption \ref{ass 1}  be fulfilled. 

\begin{enumerate}
\item The upper inner solution map $S^{u,1}_F: Y \rightrightarrows Y$ is defined as

$$S^{u,1}_F(y) := \{z\in F(\bar{x}): g_{u,\bar{x}}(y)= \Psi_e(y-z)\}.$$

\item  The upper outer solution map $S^{u,2}_F: X \rightrightarrows Y$ is defined as

$$S^{u,2}_F(x):= \{y\in F(x): f_{u,\bar{x}}(x)=g_{u,\bar{x}}(y)\}.$$

\end{enumerate}

\end{Definition}

In the next lemma, we obtain upper estimates for the subdifferentials of the inner function in both the convex and Lipschitzian cases.

\begin{Lemma}\label{lem: subdif inner g upper}

Let Assumption \ref{ass 1}  be fulfilled.  The following statements hold:

\begin{enumerate}
\item Let $\bar{y} \in \WMax (\mathcal{H}_F(\bar{x}),K) $ and suppose that $\mathcal{H}_F(\bar{x})$ is a convex and $K$-upper bounded set. Then, $g_{u,\bar{x}}$ is convex, continuous at $\bar{x}$ and

\begin{equation}\label{eq: upper estimate inner upper convex}
\partial g_{u,\bar{x}}(\bar{y})=  \partial \Psi_e(0)\cap N(\bar{y},\mathcal{H}_F(\bar{x})).
\end{equation}

\item Let $X$ and $Y$ be Asplund and fix $\bar{y} \in \WMax (F(\bar{x}),K).$ Suppose that:
 \begin{enumerate}
 \item  $F(\bar{x})$ is closed, 

\item  $S^{u,1}_F$ is inner semicompact at $\bar{y}.$ 
 \end{enumerate}Then,

\begin{equation}\label{eq: upper estimate inner upper}
\partial g_{u,\bar{x}}(\bar{y})\subseteq  \bigcup_{\bar{z} \in F(\bar{x})\cap (\bar{y}+ \bd K)} \partial \Psi_e(\bar{y}-\bar{z})\cap N(\bar{z},F(\bar{x})).
\end{equation}

\end{enumerate}

\end{Lemma}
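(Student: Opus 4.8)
The plan is to treat $g_{u,\bar{x}}$ as a marginal function and invoke the two marginal-subdifferential results recalled in the preliminaries. For both parts I would write $g_{u,\bar{x}}(y)=\inf_{z\in \tilde{F}(y)} f(y,z)$, where $f(y,z):=\Psi_e(y-z)$ and $\tilde{F}$ is the \emph{constant} set-valued mapping $\tilde{F}(y):=\mathcal{H}_F(\bar{x})$ for part $(i)$ (using the invariance from Remark~\ref{rem: replace F by epiF} to replace $F(\bar{x})$ by $\mathcal{H}_F(\bar{x})$) and $\tilde{F}(y):=F(\bar{x})$ for part $(ii)$. In this formulation the inner solution map is exactly $S^{u,1}_F$, and the whole proof reduces to computing three objects: the value $g_{u,\bar{x}}(\bar{y})$, the convex subdifferential $\partial f$, and the coderivative $D^*\tilde{F}$ of the constant map. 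The common algebraic engine is that, writing $f=\Psi_e\circ T$ with $T(y,z)=y-z$ and adjoint $T^*w^*=(w^*,-w^*)$, one gets $\partial f(\bar{y},\bar{z})=\{(w^*,-w^*):w^*\in\partial\Psi_e(\bar{y}-\bar{z})\}$; and since $\gph\tilde{F}$ is a product $Y\times S$, the normal cone factors as $N\big((\bar{y},\bar{z}),Y\times S\big)=\{0\}\times N(\bar{z},S)$, so $D^*\tilde{F}(\bar{y},\bar{z})(z^*)$ equals $\{0\}$ when $-z^*\in N(\bar{z},S)$ and is empty otherwise.

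For part $(i)$, convexity and continuity of $g_{u,\bar{x}}$ come directly from Lemma~\ref{lem: convexity inner functions}~$(ii)$. Since $\bar{y}\in\WMax(\mathcal{H}_F(\bar{x}),K)$, the $\WMax$-characterization of Proposition~\ref{prop: g =0}~$(ii)$ together with the representability property of Proposition~\ref{prop: properties of tammer function}~$(vi)$ gives $g_{u,\bar{x}}(\bar{y})=0$ and hence the solution set $\mathcal{H}_F(\bar{x})\cap(\bar{y}+\bd K)$, which contains $\bar{y}$ because $0\in\bd K$. I would then apply Theorem~\ref{thm: convex inf functions subdif} at the point $\bar{z}=\bar{y}$: regularity condition $(ii)$ holds since $f$ is Lipschitzian on $Y\times Y$ by Proposition~\ref{prop: properties of tammer function}~$(ii)$, and $\gph\tilde{F}=Y\times\mathcal{H}_F(\bar{x})$ is convex precisely by the convexity assumption on $\mathcal{H}_F(\bar{x})$ (this is why the replacement of $F(\bar{x})$ by $\mathcal{H}_F(\bar{x})$ is essential, as $F(\bar{x})$ itself is not assumed convex). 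With $S=\mathcal{H}_F(\bar{x})$, substituting $z^*=-w^*$ collapses the union in Theorem~\ref{thm: convex inf functions subdif} to $\{w^*:w^*\in\partial\Psi_e(0),\ w^*\in N(\bar{y},\mathcal{H}_F(\bar{x}))\}$, which is exactly $\partial\Psi_e(0)\cap N(\bar{y},\mathcal{H}_F(\bar{x}))$.

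For part $(ii)$ the argument runs in parallel but uses the \emph{nonconvex} marginal theorem, Theorem~\ref{thm: basic subdif marginal functions theorem}, applied in the Asplund space $Y$. Here $g_{u,\bar{x}}(\bar{y})=0$ follows from $\bar{y}\in\WMax(F(\bar{x}),K)$ via Proposition~\ref{prop: g =0}~$(ii)$, and representability gives $S^{u,1}_F(\bar{y})=F(\bar{x})\cap(\bar{y}+\bd K)$. I would verify the four hypotheses: closedness of $\tilde{F}$ at $\bar{y}$ and local closedness of $\gph\tilde{F}=Y\times F(\bar{x})$ both reduce to the closedness of $F(\bar{x})$ from assumption $(a)$; inner semicompactness of $S^{u,1}_F$ at $\bar{y}$ is assumption $(b)$; and the Lipschitz condition on $f$ is again Proposition~\ref{prop: properties of tammer function}~$(ii)$. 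With $S=F(\bar{x})$ in the common computation, substituting $z^*=-w^*$ into the inclusion of Theorem~\ref{thm: basic subdif marginal functions theorem} and letting $\bar{z}$ range over $F(\bar{x})\cap(\bar{y}+\bd K)$ produces precisely the asserted upper estimate $\bigcup_{\bar{z}} \partial\Psi_e(\bar{y}-\bar{z})\cap N(\bar{z},F(\bar{x}))$.

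The routine calculus (the chain rule for $\Psi_e\circ T$, the adjoint $T^*$, and the product form of the normal cone) is identical in both parts and poses no difficulty. The real work is organizational: matching $g_{u,\bar{x}}$ to Assumption~\ref{ass marginal} with \emph{both} spaces played by $Y$, and checking the regularity and inner-semicompactness conditions. The main subtlety I would watch is why part $(i)$ yields an \emph{equality} while part $(ii)$ gives only an inclusion; this is exactly the gap between the exact convex formula of Theorem~\ref{thm: convex inf functions subdif} and the one-sided estimate of Theorem~\ref{thm: basic subdif marginal functions theorem}, so I must ensure the convexity hypothesis on $\mathcal{H}_F(\bar{x})$ is genuinely exploited (through the convexity of $\gph\tilde{F}$) in part $(i)$ and not tacitly invoked in part $(ii)$.
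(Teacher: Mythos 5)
Your proposal is correct and follows essentially the same route as the paper's proof: the same reformulation of $g_{u,\bar{x}}$ as a marginal function over the constant maps $y \mapsto \mathcal{H}_F(\bar{x})$ (justified by Remark~\ref{rem: replace F by epiF}) and $y \mapsto F(\bar{x})$, the same chain-rule computation $\partial f(\bar{y},\bar{z}) = \{(w^*,-w^*): w^*\in\partial\Psi_e(\bar{y}-\bar{z})\}$, the same product-form coderivative, and the same applications of Theorem~\ref{thm: convex inf functions subdif} and Theorem~\ref{thm: basic subdif marginal functions theorem} respectively. Your closing remark about where the convexity of $\mathcal{H}_F(\bar{x})$ enters, and why it explains the equality-versus-inclusion discrepancy between the two parts, matches the paper's structure exactly.
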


\begin{proof}  Our statements will follow from Theorem \ref{thm: convex inf functions subdif} and Theorem \ref{thm: basic subdif marginal functions theorem} respectively. In order to see this, we consider $T \in \mathcal{L}(Y\times Y, Y)$ and $f:Y \times Y \to \R$ defined  respectively as 

$$T(y,z):= y-z, \; f(y,z):= (\Psi_e \circ T)(y,z).$$ Furthermore, we define the set-valued maps $\tilde{F}, \hat{F}: Y \rightrightarrows Y$  respectively as $\tilde{F}(y)= \mathcal{H}_F(\bar{x})$ and $\hat{F}(y) = F(\bar{x})$ for every $y \in Y.$ We can then write 

$$g_{u,\bar{x}}(y)=\inf_{z \in \tilde{F}(y)} f(y,z),$$ with corresponding solution map $S^{u,1}_{\mathcal{H}_F},$ and 

$$g_{u,\bar{x}}(y)=\inf_{z \in \hat{F}(y)} f(y,z), $$ with corresponding solution map $S^{u,1}_{F}.$ By    Proposition \ref{prop: g =0} $(ii)$ and Proposition \ref{prop: properties of tammer function} $(vi),$ we get 

\begin{eqnarray*}
S^{u,1}_{\mathcal{H}_F}(\bar{y}) & = & \{z \in \mathcal{H}_F(\bar{x}) \mid \; \Psi_e(\bar{y}-z) = g_{u,\bar{x}}(\bar{y})\} \\   
           & = & \{z \in \mathcal{H}_F(\bar{x}) \mid \; \Psi_e(\bar{y}-z) = 0\}    \\
           & = & \mathcal{H}_F(\bar{x})\cap (\bar{y}+ \bd K).\\                           
\end{eqnarray*} In particular, we deduce that $\bar{y}\in S^{u,1}_{\mathcal{H}_F}(\bar{x}).$ Similarly, we obtain 

\begin{equation}\label{eq: S^u,1(bary)}
S^{u,1}_F(\bar{y}) = F(\bar{x})\cap (\bar{y}+ \bd K).
\end{equation} On the other hand, it is obvious that $f$ is convex and continuous. Moreover, for any $\bar{z} \in Y,$ the chain rule of of convex analysis \cite[Proposition 3.28]{peypouquet2015} implies  

$$\partial f(\bar{y},\bar{z}) = T^* [\partial \Psi_e(T(\bar{y},\bar{z}))] = T^* [\partial \Psi_e(\bar{y}-\bar{z})],$$ where $T^* \in \mathcal{L}(Y^*, Y^*\times Y^*)$ is the adjoint operator of $T.$ It is easy to verify that in this case $T^* (y^*) = (y^*, -y^*).$ Hence, we get
 
 \begin{equation}\label{eq: subdiff composition tammer with linear}
 \partial f(\bar{y},\bar{z}) = \bigcup_{y^* \in \partial \Psi_e(\bar{y}-\bar{z})} (y^*, -y^*).
 \end{equation} We proceed now to analyze each case separately.

$(i)$ The convexity and continuity follows from Lemma \ref{lem: convexity inner functions} $(ii).$ The subdifferential formula will be a simple application of Theorem \ref{thm: convex inf functions subdif} and to do so, we check that the hypothesis are fulfilled. Indeed, by assumption, $\mathcal{H}_F(\bar{x})$ is a convex set and hence $\tilde{F}$ is a convex set-valued mapping. Moreover, from  Proposition \ref{prop: properties of tammer function} $(i), (ii)$ it follows that $f$ is a proper convex function that is continuous at any point of $\gph \tilde{F}$ and hence, in particular, the regularity condition $(ii)$ in Theorem \ref{thm: convex inf functions subdif} is satisfied. As a consequence of Proposition \ref{prop: g =0} $(ii),$ we also have that $\bar{y} \in \dom g_{u,\bar{x}}$ and $\dom g_{u,\bar{x}}(\bar{y})=0 < +\infty.$ 

Since  $\bar{y} \in S^{u,1}_{\mathcal{H}_F}(\bar{x}),$ we can apply now Theorem \ref{thm: convex inf functions subdif} to obtain 

\begin{equation}\label{eq: subdif g_u convex partial}
\partial g_{u,\bar{x}}(\bar{y}) = \bigcup_{(y^*,z^*) \in \partial f(\bar{y},\bar{y})}\bigg[ y^* + D^* \tilde{F}(\bar{y},\bar{y})(z^*) \bigg].
\end{equation} Next, we examine the term $D^* \tilde{F}(\bar{y},\bar{y})(z^*)$ in the above formula. Note that $\gph \tilde{F} = Y \times \mathcal{H}_F(\bar{x}).$ Hence, we get $N((\bar{y},\bar{y}),\gph \tilde{F})= \{0\}\times N(\bar{y},\mathcal{H}_F(\bar{x}))$ and from this it follows that, for any $y^* \in Y^*:$ 
 
\begin{eqnarray*}
D^*\tilde{F}(\bar{y},\bar{y})(-y^*) & = & \{z^* \in Y^* \mid \; (z^*,y^*) \in N(\bar{y},\mathcal{H}_F(\bar{x}))\} \\
                              & = & \{z^* \in Y^* \mid \; (z^*,y^*) \in \{0\}\times N(\bar{y},\mathcal{H}_F(\bar{x}))\}\\
                              & = &  \left\{
\begin{array}{ll}
      \{0\}, &  \textrm{ if } y^* \in N(\bar{y},\mathcal{H}_F(\bar{x})), \\
      \emptyset,  & \textrm{ otherwise} . \\

\end{array} 
\right. 
\end{eqnarray*}

Taking this into account together with \eqref{eq: subdiff composition tammer with linear}, we obtain the following in \eqref{eq: subdif g_u convex partial}:

\begin{eqnarray*}
\partial g_{u,\bar{x}}(\bar{y}) & = & \bigcup_{y^* \in \partial \Psi_e(0)}\bigg[ y^* + D^* \tilde{F}(\bar{y},\bar{y})(-y^*) \bigg]\\
                                 & = & \bigcup_{y^* \in \partial \Psi_e(0)} \left[y^* + \left\{
\begin{array}{ll}
      \{0\}, &  \textrm{ if } y^* \in N(\bar{y},\mathcal{H}_F(\bar{x})), \\
      \emptyset,  & \textrm{ otherwise} \\

\end{array} 
\right. \right] \\
                                  & = & \partial \Psi_e(0)\cap N(\bar{y},\mathcal{H}_F(\bar{x})),                     
\end{eqnarray*} as expected.

$(ii)$ In this case, we will apply Theorem \ref{thm: basic subdif marginal functions theorem} to obtain an upper estimate of $\partial g_{u,\bar{x}}(\bar{y}).$ We check that all the conditions of the theorem are fulfilled:  
\begin{itemize}

\item $\hat{F}$ is closed at $\bar{y}.$

This follows from condition $(a).$

\item $S^{u,1}_F$ is inner semicompact at $\bar{y}.$

This is just condition $(b).$

\item There exists a neighborhood $V$ of $\bar{y}$ such that $f$ is Lipschitzian on $V\times Y.$

 Follows directly from the Lipschitzianity of $\Psi_e$ in Proposition \ref{prop: properties of tammer function} $(ii).$

\item $\gph \tilde{F}$ is locally closed around every point in the set $\{\bar{y}\} \times S^{u,1}_F(\bar{y}).$

This is a consequence of $(a).$

\end{itemize}

Theorem \ref{thm: basic subdif marginal functions theorem} together with \eqref{eq: S^u,1(bary)} gives us now

\begin{equation}\label{eq: partial upper inner lip}
\partial g_{u,\bar{x}}(\bar{y})\subseteq  \bigcup_{\underset{ (y^*,z^*)\in \partial f(\bar{y},\bar{z}) }{\bar{z} \in F(\bar{x})\cap (\bar{y}+ \bd K)}}  \bigg[ y^* + D^*\hat{F}(\bar{y},\bar{z})(z^*)\bigg].
\end{equation} Analogous to the proof of statement $(i),$ we obtain

$$D^*\hat{F}(\bar{y},\bar{z})(z^*)= \left\{
\begin{array}{ll}
      \{0\}, & \operatorname{if } z^* \in -N(\bar{z}, F(\bar{x})),   \\
      \emptyset, & \operatorname{otherwise}. \\
\end{array} 
\right. $$  Finally, by substituting this and \eqref{eq: subdiff composition tammer with linear} into \eqref{eq: partial upper inner lip}, the desired estimate is obtained.
\end{proof}

Next, we state the main result of the section.

\begin{Theorem}\label{thm: subdif f_u lip case}
In addition to Assumption \ref{ass 1}, let $X$ and $Y$ be Asplund. Suppose also that:

\begin{enumerate}
\item $F$ is locally Lipschitzian at $\bar{x},$

\item $\WMax(F(\bar{x}),K)\neq \emptyset,$

\item $F$ is closed at $\bar{x},$

\item  $S^{u,1}_F$ is inner semicompact at every point in the set $\WMax(F(\bar{x}),K),$

\item $S^{u,2}_F$  is inner semicompact at $\bar{x}.$

\item $\gph F$ is locally closed around any point in  the set $\{\bar{x}\} \times \WMax (F(\bar{x}),K).$

\end{enumerate} 
Then,
\begin{equation}\label{eq: upper estimate subdiff f_u}
\partial f_{u,\bar{x}}(\bar{x})\subseteq - \overline{\operatorname{conv}}^*\left( \bigcup_{\bar{y}\in \WMax(F(\bar{x}),K)}  D^*F(\bar{x},\bar{y})\left[  H_{(\bar{x},\bar{y})} \right] \right),
\end{equation} where 

$$H_{(\bar{x},\bar{y})}: =  -\overline{\operatorname{conv}}^*\left( \bigcup_{\bar{z} \in F(\bar{x})\cap (\bar{y}+ \bd K)} \partial \Psi_e(\bar{y}-\bar{z}) \cap N \left(\bar{z},F(\bar{x})\right)\right).$$
\end{Theorem}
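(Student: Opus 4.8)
The plan is to mirror the structure of the proof of Theorem \ref{thm: upper estimate subdiff f_l}, since $f_{u,\bar{x}}$ has the dual structure $f_{u,\bar{x}}(x)=\sup_{y\in F(x)}g_{u,\bar{x}}(y)$, where now the supremum runs over the \emph{nonconstant} mapping $F$ while the inner functional $g_{u,\bar{x}}$ is constant in the outer variable. First I would apply Theorem \ref{thm: Lipschitz of scalarizations} $(ii)$ to record that $f_{u,\bar{x}}$ is locally Lipschitzian at $\bar{x}$, using hypotheses $(i)$ and $(ii)$. By Proposition \ref{prop: g =0} $(ii)$ and condition $(ii)$, the solution map satisfies $S^{u,2}_F(\bar{x})=\{y\in F(\bar{x}): g_{u,\bar{x}}(y)=0\}=\WMax(F(\bar{x}),K)$. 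I then write $f_{u,\bar{x}}=-\inf_{y\in F(\cdot)}\big(-g_{u,\bar{x}}(y)\big)$ and invoke Remark \ref{rem: subdif -f} to pass to $-\overline{\operatorname{conv}}^*$ of the subdifferential of the inner infimum, exactly as in equation \eqref{eq: clark f_l preliminar}.

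The next step is to apply Theorem \ref{thm: basic subdif marginal functions theorem} to the marginal function $\inf_{y\in F(\cdot)}\big(-g_{u,\bar{x}}(y)\big)$, with the genuine mapping $F$ now playing the role of the constraint set-valued mapping and $\tilde{f}(x,y):=-g_{u,\bar{x}}(y)$ the objective. I must verify the four hypotheses of that theorem: $F$ is closed at $\bar{x}$ (condition $(iii)$); $S^{u,2}_F$ is inner semicompact at $\bar{x}$ (condition $(v)$); $\tilde{f}$ is Lipschitzian on $U\times Y$, which follows from Lemma \ref{lem: Lipschitz of inner functions} $(ii)$ together with $\WMax(F(\bar{x}),K)\neq\emptyset$; and $\gph F$ is locally closed around $\{\bar{x}\}\times\WMax(F(\bar{x}),K)$ (condition $(vi)$). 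This yields an upper estimate of the form
\begin{equation*}
\partial f_{u,\bar{x}}(\bar{x})\subseteq -\overline{\operatorname{conv}}^*\!\left(\bigcup_{\bar{y}\in\WMax(F(\bar{x}),K)}\bigcup_{(x^*,y^*)\in\partial(-g_{u,\bar{x}})(\bar{y})}\!\big[x^*+D^*F(\bar{x},\bar{y})(y^*)\big]\right).
\end{equation*}
Here, since $\tilde{f}$ depends only on $y$, the $x^*$ component of $\partial(-g_{u,\bar{x}})$ is zero, so the term reduces to $D^*F(\bar{x},\bar{y})(y^*)$ with $y^*$ ranging over $\partial(-g_{u,\bar{x}})(\bar{y})$.

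To finish, I would feed in the inner estimate from Lemma \ref{lem: subdif inner g upper} $(ii)$. Using Remark \ref{rem: subdif -f} once more gives $\partial(-g_{u,\bar{x}})(\bar{y})\subseteq-\overline{\operatorname{conv}}^*\big(\partial g_{u,\bar{x}}(\bar{y})\big)$, and Lemma \ref{lem: subdif inner g upper} $(ii)$ bounds $\partial g_{u,\bar{x}}(\bar{y})$ by $\bigcup_{\bar{z}\in F(\bar{x})\cap(\bar{y}+\bd K)}\partial\Psi_e(\bar{y}-\bar{z})\cap N(\bar{z},F(\bar{x}))$; its hypotheses $(a),(b)$ are supplied by $(iii)$ (closedness of $F(\bar{x})$) and $(iv)$ (inner semicompactness of $S^{u,1}_F$ on $\WMax(F(\bar{x}),K)$). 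Substituting, the set of admissible $y^*$ is precisely $H_{(\bar{x},\bar{y})}$, and the displayed inclusion \eqref{eq: upper estimate subdiff f_u} follows. The main obstacle I anticipate is the careful bookkeeping of the two applications of Remark \ref{rem: subdif -f} and the nested $\overline{\operatorname{conv}}^*$ operations: one must ensure that each Lipschitzianity hypothesis required by Remark \ref{rem: subdif -f} (Asplund space, local Lipschitzianity of the relevant functional at the point) is genuinely met — for the outer step this is the local Lipschitzianity of $f_{u,\bar{x}}$ from Theorem \ref{thm: Lipschitz of scalarizations} $(ii)$, and for the inner step the Lipschitzianity of $g_{u,\bar{x}}$ from Lemma \ref{lem: Lipschitz of inner functions} $(ii)$ — and that the coderivative $D^*F(\bar{x},\bar{y})$ correctly absorbs the negated convexified inner subdifferential to produce the stated $H_{(\bar{x},\bar{y})}$.
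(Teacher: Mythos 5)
Your proposal is correct and follows essentially the same route as the paper's own proof: the identical decomposition $f_{u,\bar{x}}=-\inf_{y\in F(\cdot)}\bigl(-g_{u,\bar{x}}(y)\bigr)$, the same two applications of Remark \ref{rem: subdif -f} (justified by Theorem \ref{thm: Lipschitz of scalarizations} $(ii)$ and Lemma \ref{lem: Lipschitz of inner functions} $(ii)$ respectively), the same use of Theorem \ref{thm: basic subdif marginal functions theorem} with $S^{u,2}_F(\bar{x})=\WMax(F(\bar{x}),K)$ and hypotheses $(iii)$, $(v)$, $(vi)$, and the same insertion of Lemma \ref{lem: subdif inner g upper} $(ii)$ (via $(iii)$ and $(iv)$) to produce $H_{(\bar{x},\bar{y})}$. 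The only blemish is notational — in your display the condition should read $(x^*,y^*)\in\{0\}\times\partial(-g_{u,\bar{x}})(\bar{y})$ — which you correct in the very next sentence.
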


\begin{proof}
Consider the function $f:X\times Y \to Y$ defined as $f(x,y)=g_{u,\bar{x}}(y).$ By definition,  we have 

$$f_{u,\bar{x}}(x)=\sup_{y\in F(x)} f(x,y).$$ We verify that we can apply Theorem \ref{thm: basic subdif marginal functions theorem}. First, note that the solution map in this case is just $S^{u,2}_F.$ Hence, Proposition \ref{prop: g =0} $(iii)$ can be applied to obtain $f_{u,\bar{x}}(\bar{x})=0.$ Then, from Proposition \ref{prop: g =0} $(ii)$ we get 

\begin{equation}\label{eq: S^u,2_F(barx) }
S^{u,2}_F(\bar{x})= \WMax(F(\bar{x}),K)\neq \emptyset.
\end{equation} We proceed to check the rest of the assumptions:

\begin{itemize}

\item $F$ is closed at $\bar{x},$

This is  just condition $(iii)$ in the theorem.

\item $S^{u,2}_F$ is inner semicompact at $\bar{x},$

This is exactly condition $(v)$ in our theorem.

\item There is a neighborhood $U$ of $\bar{x}$ such that $f$ is Lipschitzian on $U\times Y.$

Follows directly from condition $(ii)$ and Lemma \ref{lem: Lipschitz of inner functions} $(ii).$

\item $\gph F$ is locally closed around every point in the set $\{\bar{x}\}\times S^{u,2}_F(\bar{x}).$

This follows from \eqref{eq: S^u,2_F(barx) } and condition $(vi)$ in the theorem.

\end{itemize}
Hence, taking into account the Lipschitzianity of $f_{u,\bar{x}}$ from Theorem \ref{thm: Lipschitz of scalarizations} $(ii)$, we obtain:

\begin{eqnarray}\label{eq: subdif u step1}
\partial f_{u,\bar{x}}(\bar{x}) & = & \partial \left(-\inf_{y\in F(\cdot)} -f(\cdot,y) \right)(\bar{x})\nonumber\\
                                  & \overset{\left(\textrm{Remark }\ref{rem: subdif -f}\right)}{\subseteq} & - \overline{\operatorname{conv}}^* \left(\partial \left(\inf_{y\in F(\cdot)} -f(\cdot,y) \right)(\bar{x})\right)\nonumber\\
                       &\overset{(\textrm{Theorem }\ref{thm: basic subdif marginal functions theorem}\; +\; \eqref{eq: S^u,2_F(barx) } )}{\subseteq} & -\overline{\operatorname{conv}}^*\left( \bigcup_{\underset{(x^*,y^*)\in \partial\left(- f\right)(\bar{x},\bar{y})}{\bar{y}\in \WMax(F(\bar{x}),K)}}  \bigg[ x^*+ D^* F(\bar{x},\bar{y})(y^*) \bigg] \right).
\end{eqnarray}  Note that $f$ is independent of the argument in the space $X.$ Furthermore, since $F$ is closed at $\bar{x},$ we also have that $F(\bar{x})$ is a closed set. Hence, together with condition $(iv),$ it is easy to see that the assumptions of Lemma \ref{lem: subdif inner g upper} are satisfied. Then, for any $\bar{y} \in \WMax(F(\bar{x}),K),$ we get:  

\begin{eqnarray}\label{eq: partial f enlarged}
\partial(- f)(\bar{x},\bar{y}) & = &\{0\} \times \partial (-g_{u,\bar{x}})(\bar{y}) \nonumber \\
                                 & \overset{\left(\textrm{Remark }\ref{rem: subdif -f}\right)}{\subseteq}  & -\{0\} \times  \overline{\operatorname{conv}}^*\left(\partial g_{u,\bar{x}}(\bar{y})\right)\nonumber\\
                                 & \overset{ (\textrm{Lemma \ref{lem: subdif inner g upper} $(ii)$})}{\subseteq} &   -\{0\} \times \overline{\operatorname{conv}}^*\left( \bigcup_{\bar{z} \in F(\bar{x})\cap (\bar{y}+ \bd K)} \partial \Psi_e(\bar{y}-\bar{z})\cap N(\bar{z},F(\bar{x}))\right).
\end{eqnarray} Substituting \eqref{eq: partial f enlarged} into \eqref{eq: subdif u step1},  we obtain the desired estimate.
\end{proof} 

\begin{Remark}\label{rem: f_u invariant}
Similar to Remark \ref{rem: f_l invariant}, the functional $f_{u,\bar{x}}$ remains unchanged if we substitute $F$ by $\tilde{F}: X \rightrightarrows Y$ of the form $\tilde{F}(x) = F(x) - A,$ with $A \subseteq K$ and $0 \in A.$  Hence, in Theorem \ref{thm: subdif f_u lip case} we can substitute $F$ by any other set-valued mapping  $\tilde{F}$ of the above form. From this, we can obtain different (maybe sharper) upper estimates of $\partial f_{u,\bar{x}} (\bar{x}),$ which can be translated into sharper optimality conditions set optimization problems, see Section \ref{sec: opt cond}.
\end{Remark}

\begin{Remark}
Similarly to Remark \ref{rem: convexity of upper estimate is necessary}, we mention that, although the upper estimate in \eqref{eq: upper estimate subdiff f_u} is convex (and hence we are also estimating $\partial^\circ f_{u,\bar{x}}(\bar{x})$), Example \ref{ex: master example} illustrates that  convexity is necessary.
\end{Remark}
The proof of the following corollary is similar to that of Corollary \ref{cor: f_l subdif}, and it is hence omitted.

\begin{Corollary}\label{cor: f_u subdif}
Let Assumption \ref{ass 1} be fulfilled with $X$ being Asplund. Suppose that $Y$ is finite dimensional and that $\gph F$ is closed. Furthermore, assume that $F$ is locally Lipschitzian and locally bounded at $\bar{x}.$ Then, inclusion \eqref{eq: upper estimate subdiff f_u} holds.
\end{Corollary}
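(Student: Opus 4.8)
The plan is to verify that, under the hypotheses of the corollary, all six assumptions of Theorem \ref{thm: subdif f_u lip case} are fulfilled, after which the inclusion \eqref{eq: upper estimate subdiff f_u} is immediate. The argument mirrors the proof of Corollary \ref{cor: f_l subdif} almost verbatim, with $\WMin$ replaced by $\WMax$ throughout.

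First I would extract the topological conditions from the closedness of $\gph F$. Since $\gph F$ is closed, $F$ is closed-valued and, in particular, closed at $\bar{x}$, which is condition $(iii)$; the closedness of $\gph F$ also gives at once that $\gph F$ is locally closed around every point of $\{\bar{x}\}\times\WMax(F(\bar{x}),K)$, which is condition $(vi)$. Combining the closedness of the set $F(\bar{x})$ with the local boundedness of $F$ at $\bar{x}$ and the finite dimensionality of $Y$, I obtain that $F(\bar{x})$ is compact. Condition $(i)$, local Lipschitzianity of $F$ at $\bar{x}$, is a direct hypothesis.

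Next I would establish the nonemptiness condition $(ii)$, namely $\WMax(F(\bar{x}),K)\neq\emptyset$. Because $K$ is a closed, convex, pointed and solid cone, so is $-K$, and one has the identity $\WMax(A,K)=\WMin(A,-K)$ for any $A\subseteq Y$; applying the existence result \cite[Theorem 6.3]{jahn2011} to the compact set $F(\bar{x})$ with $-K$ in place of $K$ yields $\WMax(F(\bar{x}),K)\neq\emptyset$. Finally, I would treat the inner semicompactness conditions $(iv)$ and $(v)$. The map $S^{u,1}_F$ takes values inside $F(\bar{x})$, which is compact, so its inner semicompactness at every point of $\WMax(F(\bar{x}),K)$ is automatic; for $S^{u,2}_F$, the local boundedness of $F$ at $\bar{x}$ makes the values of $S^{u,2}_F(x)\subseteq F(x)$ uniformly bounded for $x$ near $\bar{x}$, and since $Y$ is finite dimensional any bounded sequence admits a convergent subsequence, giving inner semicompactness at $\bar{x}$. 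With all six conditions verified, Theorem \ref{thm: subdif f_u lip case} applies and the conclusion follows.

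The only nonroutine point — the main obstacle — is condition $(ii)$: unlike the lower case, where $\WMin$ of a compact set is nonempty by the cited existence theorem, here I must pass to the dual cone $-K$ to obtain existence of weak maximal elements. Every remaining step transfers directly from the proof of Corollary \ref{cor: f_l subdif}, which is why the authors regard the proof as routine and omit it.
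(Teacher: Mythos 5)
Your proof is correct and follows exactly the route the paper intends: the paper omits this proof as ``similar to that of Corollary \ref{cor: f_l subdif}'', and your argument is precisely that adaptation, verifying the six hypotheses of Theorem \ref{thm: subdif f_u lip case} from the closedness of $\gph F$, the compactness of $F(\bar{x})$, and the finite dimensionality of $Y$. The one point where the adaptation is not verbatim---obtaining $\WMax(F(\bar{x}),K)\neq\emptyset$ by applying the cited existence theorem to the cone $-K$ via the identity $\WMax(A,K)=\WMin(A,-K)$---you identify and handle correctly.
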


We conclude this section with a sharper result in the convex case.

\begin{Theorem}\label{thm: subdif f_u convex case}
In addition to Assumption \ref{ass 1}, let $X$ and $Y$ be Asplund. Suppose  also that

\begin{enumerate}
\item $F$ is $\preceq^{(u)}_K$-convex and locally $u$-upper bounded at $\bar{x}$,

\item $\mathcal{H}_F$ is convex valued in a neighborhood of $\bar{x},$

\item $\mathcal{H}_F$ is closed at $\bar{x},$

\item $\WMax(\mathcal{H}_F(\bar{x}),K)\neq \emptyset,$

\item $S^{u,2}_{\mathcal{H}_F}(x)$  is inner semicompact at $\bar{x}.$

\item $\gph \mathcal{H}_F$ is locally closed around any point in  the set $\{\bar{x}\} \times \WMax (\mathcal{H}_F(\bar{x}),K).$

 \end{enumerate} 
Then,
\begin{equation*}
\partial f_{u,\bar{x}}(\bar{x})\subseteq - \overline{\operatorname{conv}}^*\left( \bigcup_{\bar{y}\in \WMax(\mathcal{H}_F(\bar{x}),K)}  D^*\mathcal{H}_F(\bar{x},\bar{y})\left[-\partial \Psi_e(0) \cap N(\bar{y},\mathcal{H}_F(\bar{x}))   \right] \right).
\end{equation*}

\end{Theorem}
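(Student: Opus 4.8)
The plan is to mirror the proof of Theorem~\ref{thm: subdif f_u lip case}, but to work with the hypergraphical multifunction $\mathcal{H}_F$ from the outset and to replace the Lipschitzian inner estimate of Lemma~\ref{lem: subdif inner g upper}$(ii)$ by the \emph{exact} convex formula of Lemma~\ref{lem: subdif inner g upper}$(i)$; this is precisely what sharpens the conclusion. First I would invoke Remark~\ref{rem: f_u invariant} to replace $F$ by $\mathcal{H}_F=F-K$ without changing $f_{u,\bar{x}}$, and Remark~\ref{rem: replace F by epiF} to note that $g_{u,\bar{x}}$ is likewise unchanged, so that $f_{u,\bar{x}}(x)=\sup_{y\in\mathcal{H}_F(x)}g_{u,\bar{x}}(y)$. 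Condition $(1)$ forces $\mathcal{H}_F(\bar{x})=F(\bar{x})-K$ to be $K$-upper bounded, and together with condition $(2)$ this lets Lemma~\ref{lem: convexity inner functions}$(ii)$ and Lemma~\ref{lem: Lipschitz of inner functions}$(ii)$ guarantee that $g_{u,\bar{x}}$ is convex, $K$-monotone, finite and Lipschitzian on $Y$. Theorem~\ref{thm: convexity of scalarizations}$(ii)$ then yields that $f_{u,\bar{x}}$ is convex and continuous at $\bar{x}$, hence locally Lipschitzian there, which is what permits the later use of Remark~\ref{rem: subdif -f}.

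Next I would apply the marginal-function machinery exactly as in Theorem~\ref{thm: subdif f_u lip case}. Setting $f(x,y):=g_{u,\bar{x}}(y)$, I write $f_{u,\bar{x}}=-\inf_{y\in\mathcal{H}_F(\cdot)}(-f)$ and identify the solution map: Proposition~\ref{prop: g =0}$(iii)$ gives $f_{u,\bar{x}}(\bar{x})=0$ (using condition $(4)$), and Proposition~\ref{prop: g =0}$(ii)$, applied to $\mathcal{H}_F$, gives $S^{u,2}_{\mathcal{H}_F}(\bar{x})=\WMax(\mathcal{H}_F(\bar{x}),K)$. The four hypotheses of Theorem~\ref{thm: basic subdif marginal functions theorem} are then checked from conditions $(3)$, $(5)$, $(6)$ and the Lipschitzianity of the integrand $g_{u,\bar{x}}$; note that the theorem requires the \emph{inner} function, not $\mathcal{H}_F$, to be Lipschitzian, which is why no Lipschitzianity of $\mathcal{H}_F$ need be assumed. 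Combining this with Remark~\ref{rem: subdif -f} applied to $f_{u,\bar{x}}$ yields the preliminary inclusion
\begin{equation*}
\partial f_{u,\bar{x}}(\bar{x})\subseteq -\overline{\operatorname{conv}}^*\left(\bigcup_{\underset{(x^*,y^*)\in\partial(-f)(\bar{x},\bar{y})}{\bar{y}\in\WMax(\mathcal{H}_F(\bar{x}),K)}}\left[x^*+D^*\mathcal{H}_F(\bar{x},\bar{y})(y^*)\right]\right).
\end{equation*}

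The final step computes the inner subdifferential. Since $f$ is independent of $x$, one has $\partial(-f)(\bar{x},\bar{y})=\{0\}\times\partial(-g_{u,\bar{x}})(\bar{y})$, so the $x^*$-component is $0$. Applying Remark~\ref{rem: subdif -f} on $Y$ and then the exact formula of Lemma~\ref{lem: subdif inner g upper}$(i)$ gives
\begin{equation*}
\partial(-g_{u,\bar{x}})(\bar{y})\subseteq -\overline{\operatorname{conv}}^*\big(\partial g_{u,\bar{x}}(\bar{y})\big)=-\big(\partial\Psi_e(0)\cap N(\bar{y},\mathcal{H}_F(\bar{x}))\big),
\end{equation*}
where the last equality holds because $\partial\Psi_e(0)$ is weak$^*$-compact and $N(\bar{y},\mathcal{H}_F(\bar{x}))$ is convex and weak$^*$-closed (as $\mathcal{H}_F(\bar{x})$ is convex), so their intersection already equals its own weak$^*$-closed convex hull. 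Substituting this back into the preliminary inclusion produces exactly the asserted estimate.

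The main obstacle I anticipate is organisational rather than deep: one must verify the hypotheses of Lemma~\ref{lem: subdif inner g upper}$(i)$ for \emph{every} $\bar{y}\in\WMax(\mathcal{H}_F(\bar{x}),K)$, and carefully track the two separate appeals to Remark~\ref{rem: subdif -f} (one on $X$ for $f_{u,\bar{x}}$, one on $Y$ for $g_{u,\bar{x}}$) so that all signs remain correct. The conceptual point — and the reason this result is sharper than merely specialising Theorem~\ref{thm: subdif f_u lip case} — is that in the convex case the inner subdifferential is obtained exactly, collapsing the union over $\bar{z}\in F(\bar{x})\cap(\bar{y}+\bd K)$ and rendering the extra weak$^*$-closed-convex-hull appearing in $H_{(\bar{x},\bar{y})}$ redundant.
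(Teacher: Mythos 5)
Your proposal is correct, and its skeleton is the same as the paper's: pass to $\mathcal{H}_F$ via Remark \ref{rem: f_u invariant}, derive the preliminary inclusion from Remark \ref{rem: subdif -f} combined with Theorem \ref{thm: basic subdif marginal functions theorem} (with solution map $S^{u,2}_{\mathcal{H}_F}(\bar{x})=\WMax(\mathcal{H}_F(\bar{x}),K)$), and then replace the Lipschitzian inner estimate of Lemma \ref{lem: subdif inner g upper}$(ii)$ by the exact convex formula of Lemma \ref{lem: subdif inner g upper}$(i)$. The one genuine difference is how the local Lipschitzianity of $f_{u,\bar{x}}$ at $\bar{x}$, needed to invoke Remark \ref{rem: subdif -f}, is discharged. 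The paper first cites \cite[Theorem 7.4.9]{TT2018} to deduce from conditions $(i)$--$(ii)$ that $\mathcal{H}_F$ itself is locally Lipschitzian at $\bar{x}$, and then reuses the Lipschitzian machinery of Theorem \ref{thm: subdif f_u lip case} (in particular Theorem \ref{thm: Lipschitz of scalarizations}$(ii)$) with $F$ replaced by $\mathcal{H}_F$. You instead observe, correctly, that Theorem \ref{thm: basic subdif marginal functions theorem} only requires Lipschitzianity of the integrand $g_{u,\bar{x}}$ (available from Lemma \ref{lem: Lipschitz of inner functions}$(ii)$), and you obtain the Lipschitzianity of $f_{u,\bar{x}}$ from its convexity and continuity at $\bar{x}$ (Theorem \ref{thm: convexity of scalarizations}$(ii)$). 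This makes your argument self-contained within the paper's own results and drops the external appeal to \cite{TT2018}; the price is that you rely on the classical fact that a convex function finite and bounded above on a neighborhood of a point is locally Lipschitzian there, which the paper never states, so you should cite it (it is standard, see e.g.\ \cite{Phe1989}). A further small improvement over the paper's exposition: you justify explicitly why the weak$^*$ closed convex hull around $\partial\Psi_e(0)\cap N(\bar{y},\mathcal{H}_F(\bar{x}))$ is redundant, namely $\partial\Psi_e(0)$ is convex and weak$^*$ compact while the normal cone to the convex set $\mathcal{H}_F(\bar{x})$ is convex and weak$^*$ closed; the paper passes over this silently when writing \eqref{eq: aux f_u 2}.
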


\begin{proof} Because of conditions $(i)$ and $(ii),$ we can apply  \cite[Theorem 7.4.9]{TT2018} to obtain that $\mathcal{H}_F$ is locally Lipschitzian at $\bar{x}.$ Then, it is easy to see that assumptions $(i)-(iii), (v)-(vi)$ of Theorem \ref{thm: subdif f_u lip case} are satisfied if we replace $F$ by $\mathcal{H}_F.$ Since these assumptions are the only ones needed to obtain \eqref{eq: subdif u step1}, we can take into account Remark \ref{rem: f_u invariant} to get in this case 

\begin{equation}\label{eq: f_u convex aux}
\partial f_{u,\bar{x}}(\bar{x}) \subseteq -\overline{\operatorname{conv}}^*\left( \bigcup_{\underset{(x^*,y^*)\in \partial\left(- f\right)(\bar{x},\bar{y})}{\bar{y}\in \WMax(\mathcal{H}_F(\bar{x}),K)}}  \bigg[ x^*+ D^* \mathcal{H}_F(\bar{x},\bar{y})(y^*) \bigg] \right),
\end{equation} where $f$ is the same function defined in Theorem \ref{thm: subdif f_u lip case}. Similar to \eqref{eq: partial f enlarged}, but applying Lemma \ref{lem: subdif inner g upper} $(i)$ instead, we obtain

\begin{equation}\label{eq: aux f_u 2}
\partial(- f)(\bar{x},\bar{y}) \subseteq -\{0\}\times  \bigg( \partial \Psi_e(0)\cap N(\bar{y},\mathcal{H}_F(\bar{x}))\bigg).
\end{equation}

The estimate is then obtained by replacing the term $\partial(- f)(\bar{x},\bar{y})$ in \eqref{eq: f_u convex aux} by the upper estimate obtained in \eqref{eq: aux f_u 2}.
\end{proof}

\section{Optimality conditions for Set Optimization problems} \label{sec: opt cond}

In this section we will obtain optimality conditions for set optimization problems based on our previous results. We start by formally defining the set optimization problem and the solution concepts that will be considered. 

\begin{Definition}\label{def: minimal solutions setopt}
 Let Assumption \ref{ass 1}  be fulfilled and let $r \in \{l,u\}.$ The set optimization problem is defined as  
\begin{equation}\label{eq:SP}
		\min\limits_{x\in \Omega}   \quad F(x), \tag{$\mathcal{SOP}$}
\end{equation}
and its minimal solutions are understood in the following sense: we say that $\bar{x}\in \Omega$ is a  
\begin{enumerate}
\item $\preceq^{(r)}_K$-weakly minimal solution of \eqref{eq:SP} if 

$$\nexists \; x\in \Omega\setminus\{\bar{x}\}: F(x)\prec^{(r)}_K F(\bar{x}).$$

%
%

\item $\preceq^{(r)}_K$-strictly minimal solution of \eqref{eq:SP} if 

$$\nexists \; x\in \Omega\setminus\{\bar{x}\}: F(x)\preceq^{(r)}_{K} F(\bar{x}).$$

\item weakly minimal solution of \eqref{eq:SP} if 

$$\exists\; \bar{y} \in F(\bar{x}): \; F(\Omega)\cap \left(\bar{y}- \Int K\right) = \emptyset.$$
\end{enumerate} 
If in the above definition we replace $\Omega$ by $\Omega\cap U$, with $U$ being a neighborhood of $\bar{x},$ we say that $\bar{x}$ is a local ($\preceq^{(r)}_K$-weakly,  $\preceq^{(r)}_K$-, $\preceq^{(r)}_K$- strictly, weakly)minimal solution respectively.
\end{Definition}

\begin{Remark} It is easy to see that $\preceq^{(r)}_K$- strictly minimal solutions are $\preceq^{(r)}_K$- weakly minimal. In addition, the minimality concept in Definition \ref{def: minimal solutions setopt} $(iii)$ is the one used in the vector approach for set optimization problems \cite{KTZ}. It is known \cite[Proposition 2.10]{HR2007} that weakly minimal solutions of \ref{eq:SP} are also $\preceq_K^{(l)}$- weakly minimal in a slightly different sense. A similar statement can be made about the set relation $\preceq_K^{(u)},$ see also \cite[Remark 2.11]{HR2007} Conversely, it was proved in \cite{KKY2017} that, if $F(\bar{x})$ has a strongly minimal element and $\bar{x} $ is a $\preceq_K^{(l)}$- weakly minimal solution of \eqref{eq:SP}, then $\bar{x}$ is also a weakly minimal solution.
%

\end{Remark}

Of course, global solutions of \eqref{eq:SP} are also local solutions. Our next proposition confirms that, as in the scalar case, the converse holds under convexity. 

\begin{Proposition}
Let Assumption \ref{ass 1}  be fulfilled and fix $r \in \{l,u\}.$  Suppose that $\Omega$ is convex,  that $F$ is $\preceq_K^{(r)}$-convex and that $\bar{x}$ is a local $\preceq_K^{(r)}$-weakly minimal solution of \eqref{eq:SP}. The following statements are true:
\begin{enumerate}
\item If $r = l,$  then $\bar{x}$ is also a global $\preceq_K^{(l)}$-weakly minimal solution.

\item If $r = u$ and $\mathcal{H}_F(\bar{x})$ is convex, then $\bar{x}$ is also a global $\preceq_K^{(u)}$-weakly minimal solution.
\end{enumerate}
\end{Proposition}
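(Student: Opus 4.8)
The plan is a contradiction argument built on the classical convex-combination trick: if a globally dominating feasible point exists, convexity of $\Omega$ and of $F$ will let me slide it along the segment toward $\bar{x}$ and manufacture a dominating point arbitrarily close to $\bar{x}$, contradicting local weak minimality. In both parts I would fix a neighborhood $U$ witnessing local minimality, assume a global dominator $x \in \Omega \setminus \{\bar{x}\}$, and set $x_\lambda := \lambda x + (1-\lambda)\bar{x}$; convexity of $\Omega$ keeps $x_\lambda \in \Omega$, and since $x_\lambda \to \bar{x}$ with $x_\lambda \neq \bar{x}$, for small $\lambda$ it lands in $(\Omega \cap U)\setminus\{\bar{x}\}$. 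The task then reduces to showing $F(x_\lambda) \prec_K^{(r)} F(\bar{x})$.

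For $r = l$ I would start from $F(\bar{x}) \subseteq F(x) + \Int K$, fix $\bar{y}\in F(\bar{x})$ and write $\bar{y} = y + k$ with $y \in F(x)$, $k \in \Int K$. Using $\preceq_K^{(l)}$-convexity in the form $\lambda F(x) + (1-\lambda)F(\bar{x}) \subseteq F(x_\lambda)+K$, the point $\lambda y + (1-\lambda)\bar{y}$ equals $y_\lambda + k'$ for some $y_\lambda \in F(x_\lambda)$, $k' \in K$. A one-line computation then gives $\bar{y}-y_\lambda = \lambda k + k' \in \Int K + K \subseteq \Int K$, so $\bar{y}\in F(x_\lambda)+\Int K$; as $\bar{y}$ was arbitrary this yields $F(\bar{x})\subseteq F(x_\lambda)+\Int K$, the desired strict domination.

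For $r = u$ I would run the mirror computation, now starting from $F(x)\subseteq F(\bar{x}) - \Int K$. Picking $y_\lambda \in F(x_\lambda)$, $\preceq_K^{(u)}$-convexity writes $y_\lambda = \lambda y + (1-\lambda)\bar{y} - k$ with $y\in F(x)$, $\bar{y}\in F(\bar{x})$, $k\in K$, and the hypothesis on $x$ gives $y = \bar{y}' - k'$ with $\bar{y}'\in F(\bar{x})$, $k'\in \Int K$. The obstacle here is that $F(\bar{x})$ itself is not assumed convex, so the combination $\lambda\bar{y}' + (1-\lambda)\bar{y}$ need not return to $F(\bar{x})$; this is exactly where I would invoke convexity of $\mathcal{H}_F(\bar{x}) = F(\bar{x}) - K$, which contains both $\bar{y}'$ and $\bar{y}$, to write $\lambda\bar{y}'+(1-\lambda)\bar{y} = \bar{y}'' - k''$ with $\bar{y}''\in F(\bar{x})$, $k''\in K$. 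Substituting back gives $y_\lambda = \bar{y}'' - (k'' + \lambda k' + k) \in F(\bar{x}) - \Int K$, hence $F(x_\lambda)\subseteq F(\bar{x})-\Int K$.

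Both chains rely only on the elementary facts that $\lambda\,\Int K = \Int K$ for $\lambda>0$ and $\Int K + K \subseteq \Int K$, valid because $K$ is a convex solid cone; these guarantee that strict interior membership survives the addition of nonnegative slack. I expect the only genuinely delicate point to be the bookkeeping in part $(ii)$: the convexity of $\mathcal{H}_F(\bar{x})$ is what absorbs the failure of $F(\bar{x})$ to be convex, and it is precisely the extra slack $k''$ it produces that can be merged into the $-\Int K$ term without damaging strictness, whereas in the $r=l$ case the relevant combination lands in $F(x_\lambda)+K$ directly and no such hypothesis is needed. Once $F(x_\lambda)\prec_K^{(r)}F(\bar{x})$ is established, the contradiction with local weak minimality is immediate, completing both cases.
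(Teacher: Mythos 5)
Your proposal is correct and follows essentially the same route as the paper: a contradiction argument via the convex-combination point $x_\lambda = \lambda \tilde{x} + (1-\lambda)\bar{x}$, using $\preceq_K^{(r)}$-convexity of $F$ and, for $r=u$, the convexity of $\mathcal{H}_F(\bar{x})$ to absorb the failure of $F(\bar{x})$ to be convex, together with the facts $\lambda \Int K = \Int K$ and $\Int K + K \subseteq \Int K$. Your element-wise bookkeeping is just an unpacking of the paper's set-inclusion chain (the paper writes out only case $(ii)$ and notes that $(i)$ is analogous, which your explicit argument for $r=l$ confirms).
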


\begin{proof}
Since the proofs are similar and resemble the one in the scalar case, we only show $(ii).$ See also \cite[Proposition 5]{ha2019} for a proof of $(i)$ with a slightly different optimality concept. Let $U$ be the neighborhood of $\bar{x}$ such that 
$$\forall \; x\in \Omega \cap U\setminus \{\bar{x}\}:\; F(x)\nprec^{(r)}_K F(\bar{x})$$ and suppose that $\bar{x}$ is not a global $\preceq_K^{(u)}$-weakly minimal solution of \eqref{eq:SP}. Then, we can find $\tilde{x} \in \Omega \setminus \{\bar{x}\}$ such that $F(\tilde{x}) \prec_K^{(u)} F(\bar{x}).$ Hence, we get the existence of $\lambda \in (0,1]$ such that $x_\lambda: = \lambda \tilde{x} +(1- \lambda)\bar{x} \in \Omega\cap U \setminus \{\bar{x}\}.$  It follows that

\begin{eqnarray*}
F(x_\lambda) & \subseteq & F(x_\lambda)-K\\
             & \overset{(F \textrm{ is } \preceq_K^{(u)}-\textrm{convex})}{\subseteq} & \lambda F(\tilde{x}) +(1- \lambda) F(\bar{x})-K\\
             & \subseteq & \lambda \mathcal{H}_F(\tilde{x}) +(1- \lambda) \mathcal{H}_F(\bar{x}) - K\\
             & \overset{(\textrm{as } F(\tilde{x})\prec_K^{(u)} F(\bar{x}))}{\subseteq} & \lambda \mathcal{H}_F(\bar{x}) +(1- \lambda) \mathcal{H}_F(\bar{x}) - \Int K\\
             & \overset{ (\mathcal{H}_F(\bar{x}) \textrm{ is convex})}{=}&  \mathcal{H}_F(\bar{x}) - \Int K\\ 
             & = & F(\bar{x}) - \Int K,
\end{eqnarray*} which is equivalent to $F(x_\lambda) \prec_K^{(u)} F(\bar{x}).$ This contradicts to the local minimality of $F$ at $\bar{x}.$

\end{proof}

In the following theorem we establish relationships between the set-valued problem and a corresponding scalar problem. We want to mention that a similar statement to $(i)$ below have been established in \cite[Corollary 4.11]{HR2007}  for the case $r = l.$

\begin{Theorem}\label{thm: scalar theorem}
Let Assumption \ref{ass 1}  be fulfilled and, for $r\in \{l,u\},$ consider the functional $f_{r,\bar{x}}$ in Definition \ref{def- scalarization} $(iii)$. The following assertions are true:

\begin{enumerate}

\item If $\bar{x}$ is a local $\preceq_K^{(r)}$-weakly minimal solution of \eqref{eq:SP}, then $\bar{x}$ is a local solution of  the problem 
\begin{equation}\label{eq:Pr}
\min\limits_{x\in \Omega} \; f_{r,\bar{x}}(x).\tag {$P_r$}
\end{equation} 
\item Conversely, suppose that $\bar{x}$ is a local strict solution of problem \eqref{eq:Pr} and either $r=l$ and $\WMin(F(\bar{x}),K)$ $\neq \emptyset,$ or $r=u$ and $\WMax(F(\bar{x}),K)\neq \emptyset.$  Then, $\bar{x}$ is a local $\preceq_K^{(r)}$-strictly minimal solution of \eqref{eq:SP}.

\end{enumerate}
\end{Theorem}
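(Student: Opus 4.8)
The plan is to exploit the fact, already recorded in Proposition \ref{prop: g =0} $(iii)$, that $f_{r,\bar{x}}(\bar{x})\leq 0$ for both $r\in\{l,u\}$, with equality precisely when $\WMin(F(\bar{x}),K)\neq\emptyset$ (for $r=l$) or $\WMax(F(\bar{x}),K)\neq\emptyset$ (for $r=u$). The two assertions then reduce to translating scalar inequalities for $f_{r,\bar{x}}$ into the set relations $\prec_K^{(r)}$ and $\preceq_K^{(r)}$, using the representability property of $\Psi_e$ in Proposition \ref{prop: properties of tammer function} $(vi)$ for the strict inequality and the scalarization estimate of Theorem \ref{thm: characterization of set relations by scalarizing functionals} for the non-strict one. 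I would treat only $r=l$ in detail; the case $r=u$ is entirely symmetric, exchanging the roles of $F(x)$ and $F(\bar{x})$ inside $\Psi_e$ and replacing $\WMin$ by $\WMax$.

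For assertion $(i)$, the key elementary step is the implication
\[
f_{l,\bar{x}}(x)<0\;\Longrightarrow\; F(x)\prec_K^{(l)}F(\bar{x}).
\]
To see this I fix an arbitrary $\bar{y}\in F(\bar{x})$; since $\inf_{y\in F(x)}\Psi_e(y-\bar{y})\leq f_{l,\bar{x}}(x)<0$, there is some $y\in F(x)$ with $\Psi_e(y-\bar{y})<0$, and by representability this means $\bar{y}-y\in\Int K$, i.e. $\bar{y}\in F(x)+\Int K$. As $\bar{y}$ was arbitrary, $F(\bar{x})\subseteq F(x)+\Int K$, which is exactly $F(x)\prec_K^{(l)}F(\bar{x})$. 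Now let $U$ be a neighborhood witnessing the local $\preceq_K^{(l)}$-weak minimality of $\bar{x}$. Contraposing the displayed implication, for every $x\in\Omega\cap U\setminus\{\bar{x}\}$ the failure of $F(x)\prec_K^{(l)}F(\bar{x})$ forces $f_{l,\bar{x}}(x)\geq 0\geq f_{l,\bar{x}}(\bar{x})$, while for $x=\bar{x}$ the inequality is trivial. Hence $f_{l,\bar{x}}$ attains a local minimum over $\Omega$ at $\bar{x}$, proving $(i)$.

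For assertion $(ii)$, the assumption $\WMin(F(\bar{x}),K)\neq\emptyset$ upgrades Proposition \ref{prop: g =0} $(iii)$ to the equality $f_{l,\bar{x}}(\bar{x})=0$. Let $U$ be a neighborhood on which $\bar{x}$ is a strict local minimizer of problem $(P_l)$, so that $f_{l,\bar{x}}(x)>f_{l,\bar{x}}(\bar{x})=0$ for all $x\in\Omega\cap U\setminus\{\bar{x}\}$. Suppose, towards a contradiction, that $\bar{x}$ is not a local $\preceq_K^{(l)}$-strictly minimal solution relative to $U$; then some $x\in\Omega\cap U\setminus\{\bar{x}\}$ satisfies $F(x)\preceq_K^{(l)}F(\bar{x})$. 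Theorem \ref{thm: characterization of set relations by scalarizing functionals} $(i)$ then yields $f_{l,\bar{x}}(x)=\sup_{\bar{y}\in F(\bar{x})}\inf_{y\in F(x)}\Psi_e(y-\bar{y})\leq 0$, contradicting $f_{l,\bar{x}}(x)>0$. Therefore no such $x$ exists and $\bar{x}$ is locally $\preceq_K^{(l)}$-strictly minimal.

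The computations are short, so the only real point of care — and the reason both the bound $f_{r,\bar{x}}(\bar{x})\leq 0$ and its sharpening to equality are invoked — is keeping the correct half of the $\Psi_e$-characterization attached to the correct assertion. For $(i)$ I use only the cheap direction ``$\Psi_e(y-\bar{y})<0$ detects a strict dominator'', which holds pointwise and needs no attainment; the genuine converse $F(x)\prec_K^{(l)}F(\bar{x})\Rightarrow f_{l,\bar{x}}(x)<0$ would require compactness-type hypotheses and is deliberately avoided. For $(ii)$ I use instead the one-directional estimate of Theorem \ref{thm: characterization of set relations by scalarizing functionals}, namely $\preceq_K^{(l)}\Rightarrow f_{l,\bar{x}}\leq 0$, and it is the strict scalar inequality coming from strict minimality of $(P_l)$ that rules it out. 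The case $r=u$ runs verbatim with $f_{u,\bar{x}}(x)=\sup_{y\in F(x)}\inf_{\bar{y}\in F(\bar{x})}\Psi_e(y-\bar{y})$, the relation $F(x)\subseteq F(\bar{x})-\Int K$ in place of $F(\bar{x})\subseteq F(x)+\Int K$, and Theorem \ref{thm: characterization of set relations by scalarizing functionals} $(ii)$.
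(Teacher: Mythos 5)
Your proposal is correct and follows essentially the same route as the paper's own proof: assertion $(i)$ via the representability property of $\Psi_e$ (Proposition \ref{prop: properties of tammer function} $(vi)$) applied pointwise together with $f_{r,\bar{x}}(\bar{x})\leq 0$, and assertion $(ii)$ via the one-directional estimate of Theorem \ref{thm: characterization of set relations by scalarizing functionals} combined with the equality $f_{r,\bar{x}}(\bar{x})=0$ coming from the nonemptiness of $\WMin$ (resp.\ $\WMax$). The only differences are cosmetic: you argue $(i)$ by contraposition where the paper argues by contradiction, and you write out the case $r=l$ while the paper details $r=u$.
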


\begin{proof}
$(i)$ Assume that $\bar{x}$ is not a local solution of \eqref{eq:Pr}. Then, for every neighborhood $U$ of $\bar{x}$ we can find $\tilde{x} \in \Omega \cap U$ such that  

\begin{equation}\label{eq: f_r<0}
f_{r,\bar{x}}(\tilde{x})< f_{r,\bar{x}}(\bar{x})\leq 0.
\end{equation} We just analyze the case $r = u$ since the other one is similar. From the definition of $f_{u,\bar{x}}$ and \eqref{eq: f_r<0}, we deduce that for every $ \tilde{y}\in F(\tilde{x}),$ the inequality $ g_{u,\bar{x}}(\tilde{y})<0$ holds. Equivalently, we obtain

$$\forall \; \tilde{y}\in F(\tilde{x})\;\exists\, \bar{y} \in F(\bar{x}): \Psi_e(\tilde{y}- \bar{y})<0.$$ Again, by Proposition \ref{prop: properties of tammer function} $(vi),$ we obtain $F(\tilde{x})\prec^{(u)}_{ K} F(\bar{x}),$ a contradiction.

%
%
%

$(ii)$ By Proposition \ref{prop: g =0} $(iii)$ we know that $f_{r,\bar{x}}(\bar{x})$ is finite. Assume that $\bar{x}$ is not a local $\preceq_K^{(r)}$-strictly minimal solution of \eqref{eq:SP}. Then, for any neighborhood $U$ of $\bar{x}$ we can find $\tilde{x}\in (\Omega\cap U)\setminus \{\bar{x}\}$ such that $$F(\tilde{x})\preceq^{(r)}_K F(\bar{x}).$$ Hence, according to  Theorem \ref{thm: characterization of set relations by scalarizing functionals}, we get 

$$f_{r,\bar{x}}(\tilde{x}) \leq  f_{r,\bar{x}}(\bar{x}).$$ This contradicts the fact that $\bar{x}$ is a local strict solution of \eqref{eq:Pr}. 
\end{proof}

Necessary optimality conditions for \eqref{eq:SP} with respect to the relation $\preceq_K^{(l)}$ are established in the next theorem.

\begin{Theorem}\label{thm: opt cond l}
Let Assumption \ref{ass 1}  be fulfilled and suppose that $\bar{x}$ is a local $\preceq_K^{(l)}$-weakly minimal solution of \eqref{eq:SP}. The following statements are true:

\begin{enumerate}

\item Suppose that $\Omega$ is convex, that $F$ is $\preceq^{(l)}_K$-convex and locally $l$-bounded at $\bar{x},$ and that $F(\bar{x})$ is strongly $K$- compact. Then, 

\begin{equation}\label{eq: opt conf convex l}
0 \in \overline{\operatorname{conv}}^*\left(\bigcup_{\bar{y} \in \Min(F(\bar{x}),K)} D^* \mathcal{E}_F(\bar{x},\bar{y})\left[\partial \Psi_e(0)\right]\right) + N(\bar{x},\Omega).
\end{equation}

This condition is sufficient for optimality provided that, in addition, $F$ is strongly $K$- compact valued in $\Omega.$

\item Suppose that $X$ and $Y$ are Asplund spaces, that $F$ is locally Lipschitzian at $\bar{x},$ and that the rest of the conditions in Theorem \ref{thm: upper estimate subdiff f_l} are fulfilled. Then,

\begin{equation}\label{eq: opt cond lip f_l}
0\in \overline{\operatorname{conv}}^*\left( \bigcup_{\bar{y}\in \WMin(F(\bar{x}),K)} \Bigg\{x^*\in X^*: \exists \;  y^* \in N(\bar{y},F(\bar{x})):  (x^*,y^*) \in G_{(\bar{x},\bar{y})} \Bigg\}\right)+N(\bar{x},\Omega),
\end{equation} where

$$G_{(\bar{x},\bar{y})}= \overline{\operatorname{conv}}^*\left( \bigcup_{\underset{z^* \in \partial \Psi_e(\bar{z}-\bar{y})}{\bar{z}\in F(\bar{x})\cap (\bar{y}-\bd K)}} D^*F(\bar{x},\bar{z})(z^*) \times \{-z^*\} \right).$$

\end{enumerate}

\end{Theorem}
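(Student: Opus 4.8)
The plan is to reduce the set-valued problem to its scalar surrogate $(P_l)$ and then invoke a Fermat rule for scalar constrained minimization. By Theorem \ref{thm: scalar theorem} $(i)$, if $\bar{x}$ is a local $\preceq_K^{(l)}$-weakly minimal solution of \eqref{eq:SP}, then $\bar{x}$ is a local minimizer of $f_{l,\bar{x}}$ over $\Omega$. The generalized Fermat rule then yields $0 \in \partial f_{l,\bar{x}}(\bar{x}) + N(\bar{x},\Omega)$, and substituting the subdifferential information obtained in Section \ref{sec: subdifferential l} produces the two claimed conditions. Thus each part reduces to (a) securing the Fermat inclusion in the correct form and (b) plugging in the appropriate description of $\partial f_{l,\bar{x}}(\bar{x})$.

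For part $(i)$, the hypotheses are exactly those of Theorem \ref{thm: convexity of scalarizations } $(i)$, so $f_{l,\bar{x}}$ is convex and continuous at $\bar{x}$. Since $\Omega$ is convex, the Moreau--Rockafellar sum rule (valid in any Banach space once one summand is continuous at a common point) gives $0 \in \partial f_{l,\bar{x}}(\bar{x}) + N(\bar{x},\Omega)$ with $N$ the normal cone of convex analysis. Because Theorem \ref{thm: subdiff lower l scalar} supplies the \emph{exact} value of $\partial f_{l,\bar{x}}(\bar{x})$, this inclusion is precisely \eqref{eq: opt conf convex l}. For the sufficiency claim, the key observation is that every step above is an equivalence in the convex setting, so \eqref{eq: opt conf convex l} forces $\bar{x}$ to globally minimize $f_{l,\bar{x}}$ over $\Omega$; as $f_{l,\bar{x}}(\bar{x})=0$ by Proposition \ref{prop: g =0} $(iii)$, one obtains $f_{l,\bar{x}}(x)\ge 0$ for all $x\in\Omega$. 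If $\bar{x}$ failed to be weakly minimal, some $x\in\Omega\setminus\{\bar{x}\}$ would satisfy $F(\bar{x})\subseteq F(x)+\Int K$; invoking the strong $K$-compactness of the images together with the upper semicontinuity of $\bar{y}\mapsto g_l(x,\bar{y})$ (exploited in the proof of Theorem \ref{thm: subdiff lower l scalar}), the supremum defining $f_{l,\bar{x}}(x)$ would be attained at a point where $\Psi_e$ is strictly negative, whence $f_{l,\bar{x}}(x)<0$, a contradiction.

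For part $(ii)$, I would use that $\Omega$ is closed (Assumption \ref{ass 1}) and that, by Theorem \ref{thm: Lipschitz of scalarizations} $(i)$, the functional $f_{l,\bar{x}}$ is locally Lipschitzian at $\bar{x}$. In the Asplund setting this licenses the limiting-subdifferential Fermat rule: a local minimizer of $f_{l,\bar{x}}$ over the closed set $\Omega$ satisfies $0\in\partial\big(f_{l,\bar{x}}+\delta_\Omega\big)(\bar{x})$, and the Lipschitzian sum rule gives $\partial\big(f_{l,\bar{x}}+\delta_\Omega\big)(\bar{x})\subseteq \partial f_{l,\bar{x}}(\bar{x})+N(\bar{x},\Omega)$, now with $N$ the limiting normal cone. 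Combining this with the upper estimate of $\partial f_{l,\bar{x}}(\bar{x})$ from Theorem \ref{thm: upper estimate subdiff f_l} yields \eqref{eq: opt cond lip f_l}.

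The routine portion is the symbolic substitution of the subdifferential formulas into the Fermat inclusion. The main obstacle is guaranteeing the Fermat rule really applies in the form $0\in\partial f_{l,\bar{x}}(\bar{x})+N(\bar{x},\Omega)$: in the convex case this is immediate once continuity of $f_{l,\bar{x}}$ at $\bar{x}$ is in hand, but in the Lipschitzian case one must verify the hypotheses of the limiting sum rule (local Lipschitzianity of $f_{l,\bar{x}}$, closedness of $\Omega$, and the Asplund property) so that no additional qualification condition is needed. For the sufficiency part of $(i)$, the delicate point is upgrading $f_{l,\bar{x}}(x)\le 0$ to the strict inequality $f_{l,\bar{x}}(x)<0$, which is exactly where the strong $K$-compactness of the images is indispensable.
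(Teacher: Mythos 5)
Your proposal is correct and follows essentially the same route as the paper: reduce to the scalar problem $(P_l)$ via Theorem \ref{thm: scalar theorem}, apply the convex Fermat rule plus the exact subdifferential formula of Theorem \ref{thm: subdiff lower l scalar} in part $(i)$, prove sufficiency by contradiction using strong $K$-compactness, upper semicontinuity of $g_l(x,\cdot)$ and Weierstrass to force $f_{l,\bar{x}}(\tilde{x})<0$, and in part $(ii)$ apply the limiting Fermat rule for Lipschitz functions over closed sets in Asplund spaces together with Theorem \ref{thm: upper estimate subdiff f_l}. The only cosmetic differences are that the paper cites ready-made results (Schirotzek's Proposition 5.1.1 and Mordukhovich's Proposition 5.3) where you invoke Moreau--Rockafellar and unpack the indicator-function sum-rule argument by hand.
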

\begin{proof} By Theorem \ref{thm: scalar theorem}, it follows that $\bar{x}$ is a solution of 
\begin{equation}\label{eq:Pl}
\min\limits_{x\in \Omega} \; f_{l,\bar{x}}(x).\tag {$P_l$}
\end{equation}

$(i)$ Because of Theorem \ref{thm: convexity of scalarizations } $(i)$, we know that $f_{l,\bar{x}}$ is convex and continuous at $\bar{x}. $ The classical necessary and sufficient condition for convex problems  \cite[Proposition 5.1.1]{Schirotzek2007} is now read as $0\in \partial f_{l,\bar{x}}(\bar{x})+ N(\bar{x},\Omega).$ Hence, the first part of the statement follows from Theorem \ref{thm: subdiff lower l scalar}.

Suppose now that $F$ is strongly $K$- compact valued in $\Omega$ and that $\bar{x}$ is not a $\preceq_K^{(l)}$- weakly minimal solution of \eqref{eq:SP}. Then, without loss of generality we can assume that $F$ is compact valued and that there exists $\tilde{x}\in \Omega$ such that 

\begin{equation}\label{eq: ftilde less fbar}
F(\tilde{x}) \prec_K^{(l)}F(\bar{x}).
\end{equation}

We claim that $f_{l,\bar{x}}(\tilde{x})<0 = f_{l,\bar{x}}(\bar{x}),$ which contradicts  \eqref{eq: opt conf convex l}. Indeed, note that because $F(\tilde{x})$ is compact, the functional $g_l(\tilde{x},\cdot)$ is finite. It is also upper semicontinuous in $Y$ because it is the infimum of continuous functionals. Since $F(\bar{x})$ is compact, the classical Weierstrass's theorem tells us that the problem 

$$\max \limits_{z \in F(\bar{x})}  \; g_l(\tilde{x},z) $$ has a solution $\bar{y}.$ According to \eqref{eq: ftilde less fbar}, we can find $\tilde{y} \in F(\tilde{x})$ such that $\tilde{y} \prec_K \bar{y}.$ Hence, we get

$$f_{l,\bar{x}}(\tilde{x}) = g_l(\tilde{x},\bar{y}) \leq \Psi_e(\tilde{y}- \bar{y}) <0,$$ as desired.

$(ii)$ Similarly to the previous case, by Theorem \ref{thm: Lipschitz of scalarizations} $(i)$ we obtain that $f_{l,\bar{x}}$ is locally Lipschitzian at $\bar{x}.$ Hence, all the assumptions for the necessary optimality conditions in \cite[Proposition 5.3]{Mordukhovich2} are satisfied. From this we get  $0\in \partial f_{l,\bar{x}}(\bar{x})+ N(\bar{x},\Omega).$ The result follows then from Theorem \ref{thm: upper estimate subdiff f_l}.
\end{proof}

With a similar argument to the one  in the previous theorem, we can obtain the optimality conditions for problems with the relation $\preceq_K^{(u)}$. The proof is hence omitted.
\begin{Theorem}\label{thm: opt cond u}
In addition to Assumption \ref{ass 1}, suppose that $X$ and $Y$ are Asplund spaces and that $\bar{x}$ is a local $\preceq_K^{(u)}$- weakly minimal solution of \eqref{eq:SP}. The following statements are true:

\begin{enumerate}

\item Suppose that  $F$ is $\preceq^{(u)}_K$-convex and locally $u$-upper bounded at $\bar{x},$ and that the conditions in Theorem \ref{thm: subdif f_u convex case} are fulfilled. Then,

$$0 \in   - \overline{\operatorname{conv}}^*\left( \bigcup_{\bar{y}\in \WMax(\mathcal{H}_F(\bar{x}),K)}  D^*\mathcal{H}_F(\bar{x},\bar{y})\left[-\partial \Psi_e(0) \cap N(\bar{y},\mathcal{H}_F(\bar{x}))   \right] \right) + N(\bar{x}, \Omega).$$

\item Suppose that the $F$ is locally Lipschitzian at $\bar{x}$ and that the conditions in Theorem \ref{thm: subdif f_u lip case} are fulfilled. Then,

\begin{equation}\label{eq: opt cond lip f_u}
0\in -\overline{\operatorname{conv}}^*\left( \bigcup_{\bar{y}\in \WMax(F(\bar{x}),K)}  D^*F(\bar{x},\bar{y})\left[H_{(\bar{x},\bar{y})} \right] \right)+ N(\bar{x},\Omega),
\end{equation} where 

$$H_{(\bar{x},\bar{y})}: =  -\overline{\operatorname{conv}}^*\left( \bigcup_{\bar{z} \in F(\bar{x})\cap (\bar{y}+ \bd K)} \partial \Psi_e(\bar{y}-\bar{z}) \cap N \left(\bar{z},F(\bar{x})\right)\right).$$

\end{enumerate}

\end{Theorem}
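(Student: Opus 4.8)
The plan is to reduce both parts to the scalar surrogate problem and then feed in the subdifferential estimates already obtained in Section \ref{sec: subdifferential u}, in complete analogy with the proof of Theorem \ref{thm: opt cond l}. The common first step is to invoke Theorem \ref{thm: scalar theorem} $(i)$: since $\bar{x}$ is a local $\preceq_K^{(u)}$-weakly minimal solution of \eqref{eq:SP}, it is automatically a local solution of the scalar problem \eqref{eq:Pr} with $r=u$. From here the whole argument reduces to writing down the correct first-order condition $0\in \partial f_{u,\bar{x}}(\bar{x})+N(\bar{x},\Omega)$ for that scalar problem and substituting the appropriate upper estimate of $\partial f_{u,\bar{x}}(\bar{x})$.

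For part $(i)$ I would first establish that $f_{u,\bar{x}}$ is convex and continuous at $\bar{x}$ by means of Theorem \ref{thm: convexity of scalarizations } $(ii)$, whose hypotheses (namely $\preceq^{(u)}_K$-convexity of $F$, convexity of $\mathcal{H}_F(\bar{x})$, and local $u$-upper boundedness of $F$ at $\bar{x}$) are exactly those imported through condition $(i)$ together with the standing assumptions of Theorem \ref{thm: subdif f_u convex case}. The classical convex optimality principle \cite[Proposition 5.1.1]{Schirotzek2007} then characterizes the local minimizer $\bar{x}$ of \eqref{eq:Pr} by $0\in \partial f_{u,\bar{x}}(\bar{x})+N(\bar{x},\Omega)$. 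Using the necessity direction of this characterization and inserting the upper estimate of $\partial f_{u,\bar{x}}(\bar{x})$ furnished by Theorem \ref{thm: subdif f_u convex case} yields the stated inclusion; note that because only an upper estimate (rather than an equality) is available here, the resulting condition is necessary but not asserted to be sufficient.

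For part $(ii)$ the regularity of $f_{u,\bar{x}}$ is Lipschitzian rather than convex. I would invoke Theorem \ref{thm: Lipschitz of scalarizations} $(ii)$: since $F$ is locally Lipschitzian at $\bar{x}$ and $\WMax(F(\bar{x}),K)\neq \emptyset$ (both guaranteed by the hypotheses of Theorem \ref{thm: subdif f_u lip case}), the functional $f_{u,\bar{x}}$ is locally Lipschitzian at $\bar{x}$. The Lipschitzian necessary optimality condition \cite[Proposition 5.3]{Mordukhovich2} then gives once more $0\in \partial f_{u,\bar{x}}(\bar{x})+N(\bar{x},\Omega)$, and substituting the upper estimate from Theorem \ref{thm: subdif f_u lip case} produces \eqref{eq: opt cond lip f_u}.

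I do not expect any serious analytic obstacle, since the upper-less relation is treated symmetrically to the lower-less relation throughout Sections \ref{sec: scalarizing function} and \ref{sec: subdifferential u}; the work is essentially bookkeeping. The one point requiring genuine care is verifying that the two estimate theorems import the correct bundles of standing hypotheses: Theorem \ref{thm: subdif f_u convex case} and Theorem \ref{thm: subdif f_u lip case} each carry their own conditions (closedness of $F$ or $\mathcal{H}_F$ at $\bar{x}$, inner semicompactness of the solution maps $S^{u,1}_F$ and $S^{u,2}_F$, and local closedness of the relevant graphs), and I would check that these are precisely what is invoked by the phrase ``the conditions in Theorem $\ldots$ are fulfilled.'' The only place where the two parts truly diverge is in matching the regularity of $f_{u,\bar{x}}$ (convexity versus local Lipschitzianity) to the corresponding scalar optimality principle, and this is exactly the role played by Theorem \ref{thm: convexity of scalarizations } $(ii)$ and Theorem \ref{thm: Lipschitz of scalarizations} $(ii)$, respectively.
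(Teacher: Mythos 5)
Your proposal is correct and is essentially the paper's own argument: the paper omits this proof precisely because it is ``similar to the previous theorem,'' and your reconstruction---reducing to the scalar problem via Theorem \ref{thm: scalar theorem} $(i)$, then applying the convex optimality condition with Theorem \ref{thm: convexity of scalarizations } $(ii)$ and Theorem \ref{thm: subdif f_u convex case} for part $(i)$, and the Lipschitzian condition of \cite[Proposition 5.3]{Mordukhovich2} with Theorem \ref{thm: Lipschitz of scalarizations} $(ii)$ and Theorem \ref{thm: subdif f_u lip case} for part $(ii)$---is exactly the intended adaptation of the proof of Theorem \ref{thm: opt cond l}. You also correctly observe that, unlike Theorem \ref{thm: opt cond l} $(i)$, no sufficiency claim is available here since only an upper estimate (not an equality) of $\partial f_{u,\bar{x}}(\bar{x})$ is known.
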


Theorem \ref{thm: opt cond l} and Theorem \ref{thm: opt cond u} motivates the following definition. 
\begin{Definition}\label{def: stationary points}
Let Assumption \ref{ass 1}  be fulfilled. We say that $\bar{x}$ is a

\begin{enumerate}
\item $\preceq_K^{(l)}$- stationary point of \eqref{eq:SP}, if \eqref{eq: opt cond lip f_l} is fulfilled,
\item $\preceq_K^{(u)}$- stationary point of \eqref{eq:SP}, if \eqref{eq: opt cond lip f_u} is fulfilled.
\end{enumerate} 

\end{Definition}

We conclude this section with the following example, that illustrate our results and compare them with other results obtained for the vector approach.

\begin{Example}\label{ex: master example}

Let $X= \Omega= \Bbb R,\, Y=\Bbb R^2, \; K= \R_+^2, \; e= \begin{pmatrix} 1 \\ 1 \end{pmatrix},\; \bar{x}=0.$  Consider the function $f: \Bbb R \to \Bbb R^2$ and the set-valued mapping $F: \Bbb R \rightrightarrows \Bbb R^2$ defined respectively as

$$
f(x):=  \begin{pmatrix} x+1 \\ x-1 \end{pmatrix},\;\; F(x):=\{f(x),-f(x)\}.
$$ In particular, we have $\nabla f(\bar{x}) = (1 \; 1).$ Then:

\begin{enumerate}
\item $F$ is locally Lipschitzian at $\bar{x}.$ 

\item $\bar{x}$ is both a local $\preceq_K^{(l)}, \preceq_K^{(u)}$- weakly minimal solution of \eqref{eq:SP}:

Indeed, it is easy to verify that, choosing $U = (-1,1):$ 

$$\forall \; x \in U: \;F(x) \nprec_K^{(l)} F(\bar{x}), \quad F(x) \nprec_K^{(u)} F(\bar{x}).$$

\item  $\bar{x}$ is not a local weakly minimal nor a local weakly maximal solution with the vector approach:

Indeed, note that in any neighborhood $U$ of $\bar{x}$ we can find $x \in U\setminus\{\bar{x}\}$ such that $-x \in U.$ Then, it is easy to check that 

$$F(\bar{x}) \subset \big( F(x)+ \Int K \big) \cup \big( F(-x)+ \Int K\big),$$

$$F(\bar{x}) \subset \big( F(x)- \Int K \big) \cup \big( F(-x)- \Int K\big).$$

\item $\bar{x}$ is not a stationary point in the sense of the vector approach:

Since $f(\bar{x}) \neq -f(\bar{x})$ and $\gph F = \gph f \cup \gph(-f),$ we have that $\gph F = \gph f $  and $\gph F = \gph(-f) $ around $(\bar{x},f(\bar{x}))$ and $(\bar{x},-f(\bar{x}))$ respectively. By the differentiability of $f$ and Remark \ref{rem: coderivative of funct}, we obtain

\begin{equation}\label{eq: coder funct1}
\forall \; z^*\in \Bbb R^2:\; D^* F(\bar{x},f(\bar{x}))(z^*) = \{\nabla f(\bar{x}) z^*\}= \{z_1^*+z_2^*\},
\end{equation}

\begin{equation}\label{eq: coder funct2}
\forall \; z^*\in \Bbb R^2:\; D^* F(\bar{x},-f(\bar{x}))(z^*) = \{-\nabla f(\bar{x}) z^*\}= \{-(z_1^*+z_2^*)\}.
\end{equation} Now, we recall that $\bar{x}$ is a stationary point of $F$ in the sense of the vector approach (see \cite[Theorem 5.1]{Bao10} and \cite[Theorem 3.11]{dureastrugariu2011} ) if there exists $\bar{y} \in F(\bar{x})$ and $y^* \in K^* \setminus\{0\}$ such that 

$$0 \in D^* F(\bar{x},\bar{y})(y^*).$$

Since $K^*= K$ in our context, it is then easy to check that  $$0 \in D^* F(\bar{x},f(\bar{x}))(y^*),\; y^*\in K^*\Longleftrightarrow y^*= \begin{pmatrix} 0 \\ 0 \end{pmatrix}.$$ Similarly, we obtain that 

$$0 \in D^* F(\bar{x},-f(\bar{x}))(y^*),\; y^*\in K^*\Longleftrightarrow y^*=\begin{pmatrix} 0 \\ 0 \end{pmatrix}.$$ It follows that $\bar{x}$ is not a  stationary point in the sense of the vector approach.

\item $\bar{x}$ is both $\preceq_K^{(l)}$- and $\preceq_K^{(u)}$-stationary:

Of course, this is a direct consequence of Theorem \ref{thm: opt cond l} and Theorem \ref{thm: opt cond u}, but we show the calculus for completeness. First, we note that $\WMin(F(\bar{x}),K)= \WMax(F(\bar{x}),K) =  F(\bar{x}).$  Because $F(\bar{x})$ consists of isolated points, we obtain 

\begin{equation}\label{eq: Normal isolated}
N(f(\bar{x}),F(\bar{x}))= N(-f(\bar{x}),F(\bar{x}))= \Bbb R^2.
\end{equation} On the other hand, from Proposition \ref{prop: properties of tammer function} $(v)$ we have

\begin{equation}\label{eq: example subdiff tammer}
\partial \Psi_e(0)= \{k^* \in \R_+^2 : k_1^*+k_2^* = 1\}.
\end{equation}

The $\preceq_K^{(l)}$- stationarity of $\bar{x}$ is now equivalent to $0 \in \overline{\operatorname{conv}}(A_1 \cup A_2),$ where
\begin{equation}
A_1 :=   \left\{x^* \in \Bbb R: \exists \; y^* \in \Bbb R^2: (x^*, {y^*})^T \in G_{(\bar{x},f(\bar{x}))}\right\},
\end{equation}

\begin{equation}
A_2 :=   \left\{x^* \in \Bbb R: \exists \; y^* \in \Bbb R^2: (x^*, {y^*})^T \in G_{(\bar{x},-f(\bar{x}))}\right\}.    
\end{equation} 

We have 

\begin{eqnarray*}
G_{(\bar{x},f(\bar{x}))} & \overset{\eqref{eq: coder funct1}}{=} & \overline{\operatorname{conv}}\left(\bigcup_{z^* \in \partial \Psi_e(0)} \{z_1^*+ z_2^*\} \times \{-z^*\}\right)\\
                         & \overset{\eqref{eq: example subdiff tammer}}{=} & \overline{\operatorname{conv}}\left(\bigcup_{z^* \in \partial \Psi_e(0)} \{1\} \times \{-z^*\}\right)\\
                         & = & \{1\} \times \left(-\partial \Psi_e(0)\right).
\end{eqnarray*} From this, we deduce that $A_1 = \{1\}.$ Using a similar  argument we can obtain $G_{(\bar{x},-f(\bar{x}))}= \{-1\} \times \left(-\partial \Psi_e(0)\right),$ from which we obtain $A_2 = \{-1\}.$ Hence, we have $$0 \in [-1,1]=\overline{\operatorname{conv}}(A_1 \cup A_2),$$ and the $\preceq_K^{(l)}$- stationarity of $\bar{x}$ follows.

Next, we show that $\bar{x}$ is also $\preceq_K^{(u)}$- stationary. This is equivalent to $0 \in \overline{\operatorname{conv}}(B_1 \cup B_2),$ where
\begin{equation}\label{eq: b1}
B_1 := - D^*F(\bar{x},f(\bar{x})) \left[ H_{(\bar{x},f(\bar{x}))}\right],
\end{equation} 

\begin{equation}\label{eq: b2}
B_2 := - D^*F(\bar{x},f(\bar{x})) \left[ H_{(\bar{x},-f(\bar{x}))}\right].   
\end{equation} 

In this case we have
\begin{eqnarray*}
 H_{(\bar{x},f(\bar{x}))}& = & -\overline{\operatorname{conv}}\left( \partial \Psi_e(0) \cap N \left(f(\bar{x}),F(\bar{x})\right)\right)\\
                         & \overset{\eqref{eq: Normal isolated}}{=}& - \partial \Psi_e(0).
\end{eqnarray*} From this, we deduce that $$B_1 \overset{\eqref{eq: b1} }{=} -D^*F(\bar{x},f(\bar{x}))[- \partial \Psi_e(0)] \overset{\eqref{eq: coder funct1}}{=} \{1\}.$$ Similarly, we can obtain $H_{(\bar{x},-f(\bar{x}))} =  - \partial \Psi_e(0),$ from which we get $$B_2 \overset{\eqref{eq: b2} }{=} -D^*F(\bar{x},-f(\bar{x}))[- \partial \Psi_e(0)] \overset{\eqref{eq: coder funct2}}{=} \{-1\}.$$ Hence, we have $0 \in [-1,1] = \overline{\operatorname{conv}}(B_1 \cup B_2),$ and $\bar{x}$ is $\preceq_K^{(u)}$- stationary.

\end{enumerate}

\end{Example}

%
%
%
%
%

\section{Conclusions}\label{sec: conclussions}

In this paper, we considered the set optimization problem with respect to the lower and upper less relations. The main contributions  are the optimality conditions in Theorem \ref{thm: opt cond l} and Theorem \ref{thm: opt cond u}, that are derived under the Lipschitzianity of the set-valued objective mapping and other natural assumptions. Perhaps the most attractive feature of our necessary conditions is that we do not require neither convexity nor compactness of the images of $F,$ nor the existence of a strongly minimal element in the optimal set, which are some of the drawbacks of the other approaches in the literature \cite{Alonsomarin2005,AlonsoMarin2009,dempepilecka2016, ha2018, ha2019, HueJimNov2020, jahn2015, Jahn2017,KKY2017,kongetal2017,oussarhandaidai2018, pilecka2014, rodmarinsama2007}. 

The results obtained also open several ideas for further research. In particular, the scheme employed could be easily extended to other set relations, like those described in \cite{jahnha2011,karamanemrahetal2018,kuroiwa1998, Kuroiwa2001} and that were not mentioned here. In addition,  it is also of interest to relax the Lipschitzian assumption, maybe replacing it by some type of lower semicontinuity property, and therefore obtaining stronger results. Finally, we believe that our optimality conditions are the first step towards deriving algorithms for set optimization problems that converge to stationary points.

\section*{Aknowledgments}\label{sec: aknowledgments}

The authors would like to thank Boris Mordukhovich (Wayne State University) and Truong Q. Bao (Northern Michigan University) for comments and suggestions that improved an earlier version of this paper.

\bibliographystyle{siam}
\bibliography{references}
\end{document}